\theoremstyle{definition}
\newtheorem{defn}{Definition}[section]
\newtheorem{eg}[defn]{Example}
\theoremstyle{plain}
\newtheorem{lem}[defn]{Lemma} 
\declaretheorem[name=Theorem,numberwithin=section]{thm}
\newcommand{\Mod}[1]{\ (\mathrm{mod}\ #1)}
\newcommand{\sk}{\mathcal{S}}
\renewcommand{\P}{\mathbf{P}}
\newcommand{\js}{\mathrm{js}}
\newcommand{\jx}{\mathrm{jx}}
\newcommand{\jd}{\mathrm{j}}
\newcommand{\vcenter{\hbox{\tiny{\def \svgwidth{.025\columnwidth}
\begingroup%
  \makeatletter%
  \providecommand\color[2][]{%
    \errmessage{(Inkscape) Color is used for the text in Inkscape, but the package 'color.sty' is not loaded}%
    \renewcommand\color[2][]{}%
  }%
  \providecommand\transparent[1]{%
    \errmessage{(Inkscape) Transparency is used (non-zero) for the text in Inkscape, but the package 'transparent.sty' is not loaded}%
    \renewcommand\transparent[1]{}%
  }%
  \providecommand\rotatebox[2]{#2}%
  \newcommand*\fsize{\dimexpr\f@size pt\relax}%
  \newcommand*\lineheight[1]{\fontsize{\fsize}{#1\fsize}\selectfont}%
  \ifx\svgwidth\undefined%
    \setlength{\unitlength}{59.54299851bp}%
    \ifx\svgscale\undefined%
      \relax%
    \else%
      \setlength{\unitlength}{\unitlength * \real{\svgscale}}%
    \fi%
  \else%
    \setlength{\unitlength}{\svgwidth}%
  \fi%
  \global\let\svgwidth\undefined%
  \global\let\svgscale\undefined%
  \makeatother%
  \begin{picture}(1,1.51151269)%
    \lineheight{1}%
    \setlength\tabcolsep{0pt}%
    \put(0,0){\includegraphics[width=\unitlength,page=1]{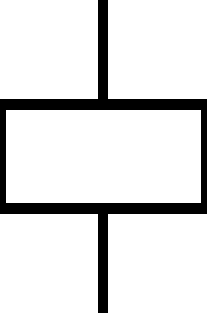}}%
  \end{picture}%
\endgroup%
}}}}{\vcenter{\hbox{\tiny{\def \svgwidth{.025\columnwidth}
\begingroup%
  \makeatletter%
  \providecommand\color[2][]{%
    \errmessage{(Inkscape) Color is used for the text in Inkscape, but the package 'color.sty' is not loaded}%
    \renewcommand\color[2][]{}%
  }%
  \providecommand\transparent[1]{%
    \errmessage{(Inkscape) Transparency is used (non-zero) for the text in Inkscape, but the package 'transparent.sty' is not loaded}%
    \renewcommand\transparent[1]{}%
  }%
  \providecommand\rotatebox[2]{#2}%
  \newcommand*\fsize{\dimexpr\f@size pt\relax}%
  \newcommand*\lineheight[1]{\fontsize{\fsize}{#1\fsize}\selectfont}%
  \ifx\svgwidth\undefined%
    \setlength{\unitlength}{59.54299851bp}%
    \ifx\svgscale\undefined%
      \relax%
    \else%
      \setlength{\unitlength}{\unitlength * \real{\svgscale}}%
    \fi%
  \else%
    \setlength{\unitlength}{\svgwidth}%
  \fi%
  \global\let\svgwidth\undefined%
  \global\let\svgscale\undefined%
  \makeatother%
  \begin{picture}(1,1.51151269)%
    \lineheight{1}%
    \setlength\tabcolsep{0pt}%
    \put(0,0){\includegraphics[width=\unitlength,page=1]{jwn.pdf}}%
  \end{picture}%
\endgroup%
}}}}
\newcommand\TikCircle[1][2.5]{\tikz[baseline=-#1]{\draw[thick](0,0)circle[radius=#1mm];}}
\newcommand\ocrossing[1][2.5]{\tikz[baseline=-#1]{\draw[thick](0,0) --(.3, .3); \draw[thick](0, .3) --(.09, .2); \draw[thick](.2, .1) --(.3, 0)}}
\newcommand\ares[1][2.5]{\tikz[baseline=-#1]{\draw[thick] (0, 0.3) to [out=315, in=45] (0, 0); \draw[thick] (0.3, 0) to [out=135, in=225] (.3, .3)}}
\newcommand\bres[1][2.5]{\tikz[baseline=-#1]{\draw[thick] (0, 0.3) to [out=315, in=225] (.3, .3); \draw[thick] (0.3, 0) to [out=135, in=45] (0, 0)}}
\begin{document}
\title{Cancellations in the degree of the colored Jones polynomial}
\author[C. Lee]{Christine Ruey Shan Lee}
\address[]{Department of Mathematics and Statistics, University of South Alabama, Mobile AL 36608}
\email[]{christine.rs.lee@gmail.com}
\author[R. van der Veen]{Roland van der Veen}
\address[]{University of Groningen, Bernoulli Institute, P.O. Box 407, 9700 AK Groningen, The Netherlands}
\email[]{r.i.van.der.veen@rug.nl}

\thanks{C. Lee is supported in part by a National Science Foundation grant DMS 1907010. }

\begin{abstract}
We give an alternate expansion of the colored Jones polynomial of pretzel links which recovers the degree formula in \cite{GLV17}. As an application, we determine the degrees of the colored Jones polynomials of a new family of 3-tangle pretzel knots. 
\end{abstract}
\maketitle

\section{introduction} 
The discovery of the Jones polynomial and related quantum knot and 3-manifold invariants has revolutionized knot theory. The colored Jones polynomial, an important example that is an invariant for a link in $S^3$ from the representation theory of the quantum group $U_q(\mathfrak{sl}_2)$, has been studied a great deal. 

For alternating and more generally adequate knots, it has been shown that the coefficients of their colored Jones polynomial stabilize \cite{DL07, Arm13, GL15} and give information on the hyperbolic volume and geometry of the knot complement \cite{DL07, FKP13}, and that their degrees relate to the boundary slopes of essential surfaces \cite{Ga:slope, KT15}. 

For non-adequate knots,  it is natural to ask the extent to which similar results hold. Results extending relationships observed for adequate knots exist \cite{GV, LV:3pretzel, MT, BMT18, Lee17,  LLY, Howie}.  However, it is  difficult to study the colored Jones polynomial in complete generality, since the state sum which may be used to define the polynomial often has cancellations that are difficult to control. The purpose of this paper is to give an expansion of the colored Jones polynomial whose cancellation is more explicitly managed. The main ingredient is Khovanov's coefficient formulas for the expansion of the Jones-Wenzl projector in the canonical basis of the Temperley-Lieb algebra \cite{Kho97}. 

We will consider links in $S^3$ with diagrams from the projection to $S^2$. Let $P(w_0, w_1, \ldots, w_m)$ denote the pretzel link with standard diagram $L$ consisting of $m+1$ vertical twist regions, each of $|w_i|$ crossings joined side by side, with the twist region of $w_{m}$ joined to that of  $w_{0}$ as shown on the left in Figure \ref{f.pretzel}. If $w_i > 0$ then the twist region is made up of positive crossings, and if $w_i< 0$, the twist region is made up of negative crossings.
\begin{figure}[H]
\def \svgwidth{\columnwidth}
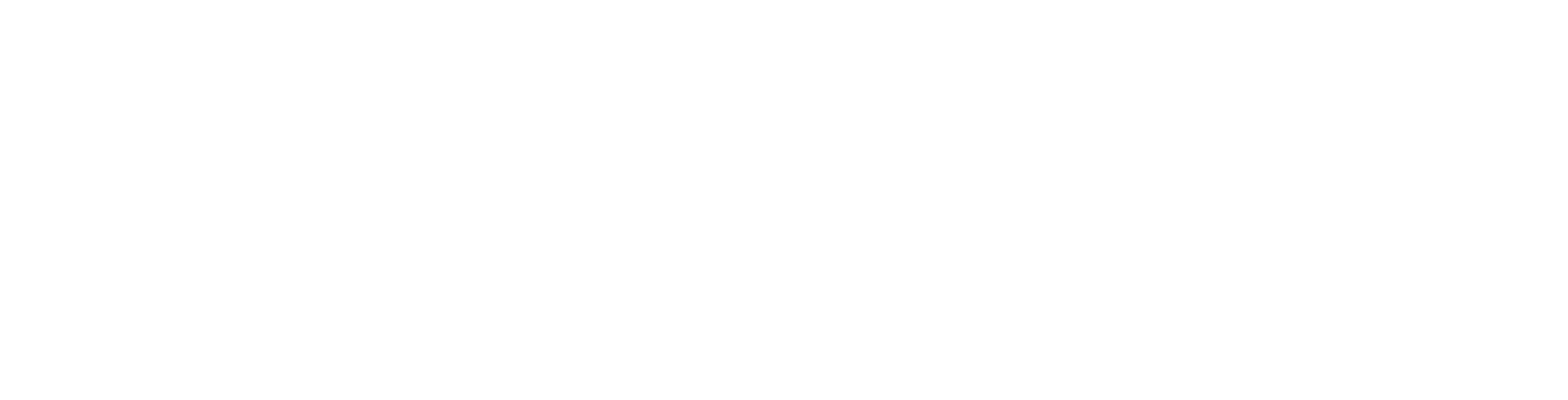 
\caption{\label{f.pretzel} Left: A diagram $L$ for the pretzel link $P(w_0, w_1, \ldots, w_m)$ with positive and negative twist regions. Middle: The $n$-cabled skein element $L^n$ decorated by Jones-Wenzl projectors. Right: The skein element $T$ resulting from applying the fusion and untwisting formulas to $L^n$. The center Jones-Wenzl projector to each twist region is shown as a black box.}
\end{figure} 
Consider the diagram $L$ as an element in the skein module $K(\mathbb{R}^2)$ of the plane $\mathbb{R}^2$. See Definition \ref{d.skein}. For a fixed positive integer $n\geq 1$, cable a Jones-Wenzl projector as defined in Definition \ref{d.jw} and shown as a box  $\vcenter{\hbox{\tiny{\def \svgwidth{.025\columnwidth}}}}_n$ on $n$ strands in each component of $L$ using the blackboard framing. Denote the resulting skein element by $L^n$. Using the idempotent property of the projector, double the projectors so that each twist region is framed by four projectors, then apply the fusion and untwisting formulas \eqref{e.funtwist} to each of the twist regions of $P(w_0, w_1, \ldots, w_m)$. Let $k = (k_0, k_1, \ldots, k_m)$ be the set of fusion parameters. We get a skein element $T_k \in K(\mathbb{R}^2)$ that is the union of skein elements $T_i$ in the Temperley-Lieb algebra  $TL_n$ for  $0\leq i \leq m$ decorated by Jones-Wenzl projectors joined side by side. For each $T_i$ there is a center Jones-Wenzl projector from fusing and untwisting the twist region, and four framing projectors shared by adjacent skein elements $T_{i-1}$, $T_{i+1}$.  See Figure \ref{f.pretzel} for an illustration. 

The Jones-Wenzl projector in $TL_n$ has an expansion
\[ \vcenter{\hbox{\tiny{\def \svgwidth{.025\columnwidth}}}}_n = \sum_{d\in B_n^{TL}} \mathbf{P}(d) d \] 
 in the canonical basis $B_n^{TL}$, where $\mathbf{P}(d)$ is the coefficient multiplying the skein element $d$ of the basis in the expansion, see Definition \ref{d.dex}. Let $\sigma$ denote the choice of the skein element $\sigma = (d_1, \ldots, d_m, \sigma_t^1, \sigma_b^1, \ldots, \sigma_t^{m-1}, \sigma_b^{m-1})$ in the expansion of the Jones-Wenzl projectors decorating $T_k$. More precisely, for $1\leq i \leq m$, $d_i \in B_n^{TL}$ is the choice of a skein element in the expansion of the center projector for $T_i$.  For $1\leq i \leq m-1$, $\sigma_t^i \in B_n^{TL}$ or $\sigma_b^i \in B_{n_i}^{TL}$ is the choice, in order, of a skein element of the top or bottom projector shared by $T_{i}$, $T_{i+1}$, where after choosing $\sigma_t^i$, we remove any circles attached to a projector via \eqref{e.rcircle}, then choose $\sigma_b^i$ for the bottom projector in $TL_{n_i}$. If the number of circles removed is $c_i$, then  $n_i = n-c_i$.  For $n\geq 1$, we may write the $n+1$ colored Jones polynomial of the pretzel link as follows. See Definition \ref{d.cjp}. 
\begin{equation} \label{e.ssum}
\begin{split}
J_{P, n+1} &= ((-1)^n q^{\frac{1}{2}})^{w(L)(n^2+2n)} \langle L^n \rangle \text{, where} \\ 
\langle L^n \rangle  &= \sum_{0 \leq k_i \leq n, \ \sigma}  \underbrace{\left( \prod_{i=0}^m \frac{[2k_i+1]}{\theta(n, n, 2k_i)} U(w_i, k_i) \right) \left(\prod_{i=1}^m\mathbf{P}(d_i) \right) \left(  \prod_{i=1}^{m-1}  \mathbf{P}(\sigma_t^i)\mathbf{P} (\sigma_b^i)R(c_i)  \right)}_{G_{k, \sigma}}  \langle  T^n_{k, \sigma} \rangle.  \end{split}
\end{equation} 
Here $T^n_{k, \sigma}$ is the skein element that comes from applying $\sigma$ to $T_k$, $\langle \cdot \rangle$ is the Kauffman bracket as in Definition \ref{d.kbracket}, $w(L)$ is the writhe of the diagram $L$, $\frac{[2k_i+1]}{\theta(n, n, 2k_i)}U(w_i, k_i)$ is a product of rational functions in $q$ from the fusion and untwisting formulas, and $R(c_i)$ is the rational function resulting from removing $c_i$ circles from a Jones-Wenzl projector in $TL_{n_i}$ using \eqref{e.rcircle}. 

Denote the rational function in $q$ in \eqref{e.ssum} multiplying $\langle T^n_{k, \sigma}  \rangle$ by $G_{k, \sigma}$.  Our main result is the explicit formula of $G_{k, \sigma}$ in terms of quantum factorials for the leading terms of the state sum defining the colored Jones polynomial of a pretzel link. 
\begin{thm} \label{t.mainintro}
Let $k = (k_0, k_1, \ldots, k_m) \in \mathbb{Z}_{\geq 0}^{m+1}$ with $k_i \leq n$ and $\sigma =  (d_1, \ldots, d_m, \sigma_t^1, \ldots, \sigma_b^{m-1})$ as described above. 
The  $n+1$ colored Jones polynomial of the pretzel link $P=P(w_0, w_1, \ldots, w_m)$ with standard diagram $L$ has the form 
\[  J_{P, n+1} =((-1)^n q^{\frac{1}{2}})^{w(L)(n^2+2n)}\sum_{0\leq k_i\leq n, \sigma} G_{k, \sigma} \langle T^n_{k, \sigma} \rangle  = ((-1)^n q^{\frac{1}{2}})^{w(L)(n^2+2n)} \sum_{k_0 = \sum_{i=1}^m k_i } \mathcal{G}_{k},\]
where 
\begin{align*} 
 \mathcal{G}_{k} &=\left(\prod_{i=0}^m \frac{[2k_i+1]}{\theta(n, n, 2k_i)} U(w_i, k_i)\right) \left( \prod_{i=1}^{m-1} \left( \frac{[n-\sum_{j=1}^i k_j]![n-k_{i+1}]!}{[n-\sum_{j=1}^{i+1} k_j]![n]!} \right)^2 \right) \langle \mathcal{T}_{k_0} \rangle + l.o.t.\end{align*} 
Here l.o.t. denotes lower order terms, and $\langle \mathcal{T}_{k_0} \rangle$ is the Kauffman bracket of the following skein element: 
\begin{figure}[H]
\def \svgwidth{.3\columnwidth}
\begingroup%
  \makeatletter%
  \providecommand\color[2][]{%
    \errmessage{(Inkscape) Color is used for the text in Inkscape, but the package 'color.sty' is not loaded}%
    \renewcommand\color[2][]{}%
  }%
  \providecommand\transparent[1]{%
    \errmessage{(Inkscape) Transparency is used (non-zero) for the text in Inkscape, but the package 'transparent.sty' is not loaded}%
    \renewcommand\transparent[1]{}%
  }%
  \providecommand\rotatebox[2]{#2}%
  \newcommand*\fsize{\dimexpr\f@size pt\relax}%
  \newcommand*\lineheight[1]{\fontsize{\fsize}{#1\fsize}\selectfont}%
  \ifx\svgwidth\undefined%
    \setlength{\unitlength}{509.85356161bp}%
    \ifx\svgscale\undefined%
      \relax%
    \else%
      \setlength{\unitlength}{\unitlength * \real{\svgscale}}%
    \fi%
  \else%
    \setlength{\unitlength}{\svgwidth}%
  \fi%
  \global\let\svgwidth\undefined%
  \global\let\svgscale\undefined%
  \makeatother%
  \begin{picture}(1,0.80622931)%
    \lineheight{1}%
    \setlength\tabcolsep{0pt}%
    \put(0,0){\includegraphics[width=\unitlength,page=1]{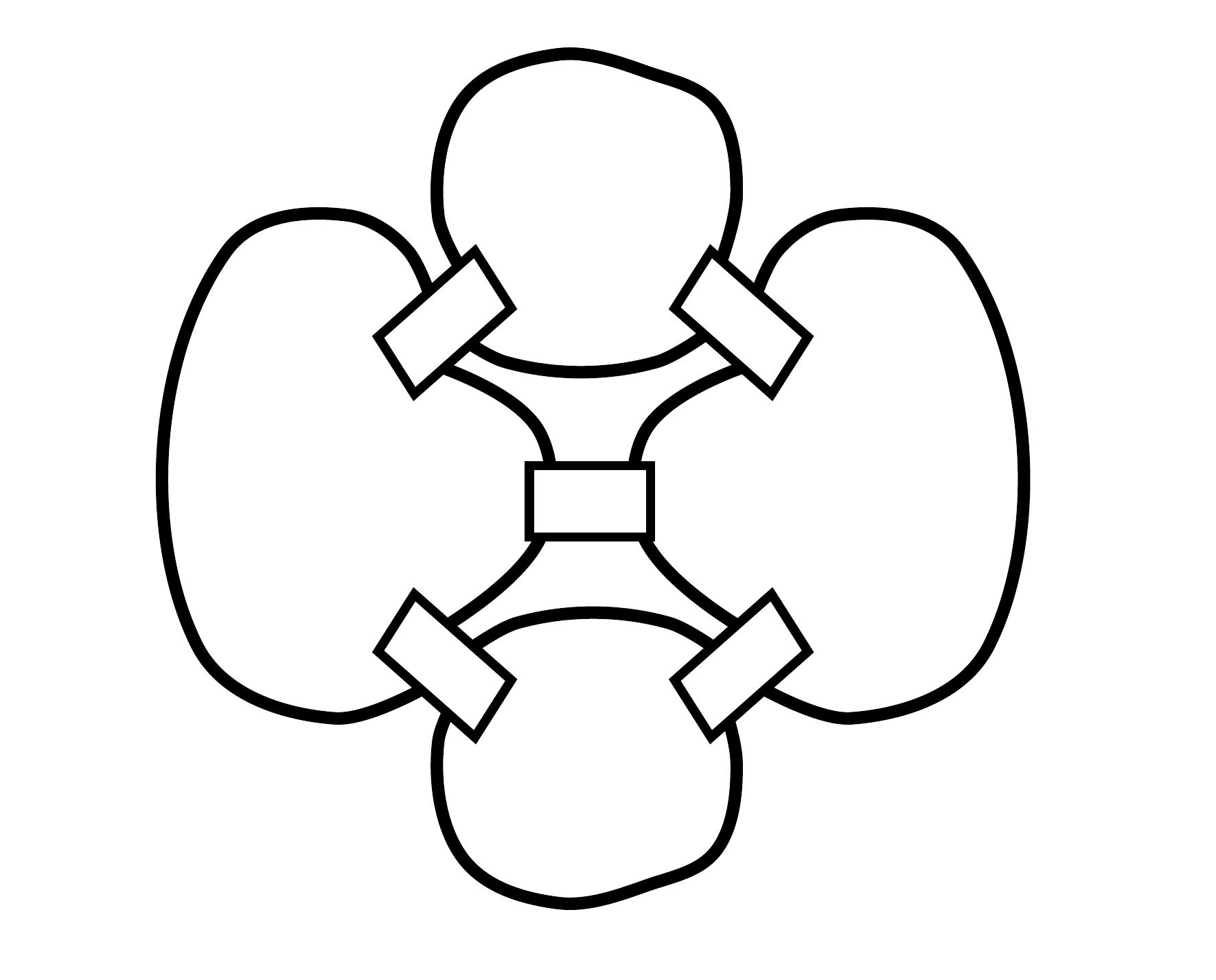}}%
    \put(0.56414705,0.43182641){\makebox(0,0)[lt]{\lineheight{1.25}\smash{\begin{tabular}[t]{l}$k_0$\end{tabular}}}}%
    \put(0.3196507,0.43182641){\makebox(0,0)[lt]{\lineheight{1.25}\smash{\begin{tabular}[t]{l}$k_0$\end{tabular}}}}%
    \put(0.56505062,0.33179779){\makebox(0,0)[lt]{\lineheight{1.25}\smash{\begin{tabular}[t]{l}$k_0$\end{tabular}}}}%
    \put(0.3234963,0.33473982){\makebox(0,0)[lt]{\lineheight{1.25}\smash{\begin{tabular}[t]{l}$k_0$\end{tabular}}}}%
    \put(0.87963755,0.42300039){\makebox(0,0)[lt]{\lineheight{1.25}\smash{\begin{tabular}[t]{l}$k_0$\end{tabular}}}}%
    \put(-0.00296884,0.45242058){\makebox(0,0)[lt]{\lineheight{1.25}\smash{\begin{tabular}[t]{l}$k_0$\end{tabular}}}}%
    \put(0.39420396,0.77604292){\makebox(0,0)[lt]{\lineheight{1.25}\smash{\begin{tabular}[t]{l}$n-k_0$\end{tabular}}}}%
    \put(0.41731984,0.00691452){\makebox(0,0)[lt]{\lineheight{1.25}\smash{\begin{tabular}[t]{l}$n-k_0$\end{tabular}}}}%
  \end{picture}%
\endgroup%
 
\caption{\label{f.tko} The skein element $\mathcal{T}_{k_0}$}
\end{figure}  
\end{thm}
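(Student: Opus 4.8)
The plan is to start from the state-sum expansion \eqref{e.ssum} of $\langle L^n\rangle$ and, for each fixed fusion vector $k=(k_0,\dots,k_m)$, determine which choices $\sigma$ of skein elements in the Jones--Wenzl expansions contribute the top-degree part of $G_{k,\sigma}\langle T^n_{k,\sigma}\rangle$, then evaluate that part. The first point is that $\mathcal G_k$ vanishes unless $k_0=\sum_{i=1}^m k_i$: tracing the $2k_i$-colored strands produced by \eqref{e.funtwist} around the pretzel closure, the strands emanating from the twist regions $w_1,\dots,w_m$ must be joined admissibly to the $2k_0$-strand coming from $w_0$, and the annihilation property of the Jones--Wenzl projectors forces the total colour to balance; the closed skein element (hence $\mathcal G_k$) vanishes otherwise. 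This is exactly what restricts the outer summation in the statement to $k_0=\sum_{i=1}^m k_i$.

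For such $k$, I would pin down the leading $\sigma$ using Khovanov's formula for the expansion $\jwn_n=\sum_{d\in B_n^{TL}}\mathbf P(d)\,d$ (Definition \ref{d.dex}): the identity skein element carries the extremal coefficient $\mathbf P(1)=1$, and every non-identity $d$ has $\deg\mathbf P(d)$ strictly below a bound that can be read off from Khovanov's quantum-factorial formula. Replacing a center projector $d_i$ by a non-identity basis element strictly lowers the degree of the summand — the extra circles it may create in $T^n_{k,\sigma}$, evaluated through $\langle\cdot\rangle$ and \eqref{e.rcircle}, cannot make up the loss — so the top degree for fixed $k$ forces $d_i=1$ for all $i$; likewise, among the shared framing projectors, the leading choice is the unique pair $\sigma_t^i,\sigma_b^i$ that caps off the $n-\sum_{j\le i}k_j$ excess strands between $T_i$ and $T_{i+1}$. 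For that choice, the product $\mathbf P(\sigma_t^i)\mathbf P(\sigma_b^i)R(c_i)$, with $c_i$ the number of circles removed from the resulting projector in $TL_{n_i}$, evaluates by Khovanov's formula together with the circle-removal rule \eqref{e.rcircle} to exactly $\Bigl(\frac{[n-\sum_{j=1}^i k_j]!\,[n-k_{i+1}]!}{[n-\sum_{j=1}^{i+1}k_j]!\,[n]!}\Bigr)^2$.

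With this leading $\sigma=\sigma^\ast$ fixed, I would simplify $T^n_{k,\sigma^\ast}$: every center projector being the identity, the $2k_i$-strands run straight, the capped strands are removed via \eqref{e.rcircle}, and the diagram collapses to the skein element $\mathcal T_{k_0}$ of Figure \ref{f.tko}. Multiplying the fusion--untwisting factor $\prod_{i=0}^m\frac{[2k_i+1]}{\theta(n,n,2k_i)}U(w_i,k_i)$, which is common to every $\sigma$, by the coefficient contributions just computed and by $\langle\mathcal T_{k_0}\rangle$, and carrying the writhe prefactor $((-1)^nq^{1/2})^{w(L)(n^2+2n)}$ from \eqref{e.ssum}, yields the displayed expression for $\mathcal G_k$; all summands coming from non-leading $\sigma$ are collected into the lower order terms.

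The main obstacle is the dominance claim of the second step: showing rigorously that the identity term in each Jones--Wenzl expansion wins, i.e. that no combination of non-identity coefficients $\mathbf P(d_i),\mathbf P(\sigma_t^i),\mathbf P(\sigma_b^i)$ — even though such a choice can introduce extra circles that raise $\deg\langle T^n_{k,\sigma}\rangle$ — ever reaches the degree of the $\sigma^\ast$-term. This is precisely the cancellation phenomenon of the title, and controlling it requires a careful degree comparison between Khovanov's explicit formula for $\mathbf P(d)$ and the number of new circles a given non-identity generator forces in the pretzel skein element. A secondary technical point is the combinatorial verification that the reduced diagram is exactly $\mathcal T_{k_0}$ and that $\langle\mathcal T_{k_0}\rangle\neq 0$, so that the displayed term is genuinely the leading term of $\mathcal G_k$ and recovers the degree formula of \cite{GLV17}.
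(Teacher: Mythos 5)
Your overall strategy---start from \eqref{e.ssum}, isolate a distinguished choice of $\sigma$ for each $k$, and evaluate its coefficient by Khovanov's formulas---has the same shape as the paper's argument (Theorem \ref{t.ssum}), and your identification of the framing-projector contribution $\left(\frac{[n-\sum_{j=1}^i k_j]![n-k_{i+1}]!}{[n-\sum_{j=1}^{i+1}k_j]![n]!}\right)^2$ agrees with Lemma \ref{l.fmjw}. However, there are two genuine gaps.

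First, your claim that $\mathcal G_k$ \emph{vanishes} unless $k_0=\sum_{i=1}^m k_i$ is not correct, and it is not how the paper restricts the outer sum. The only vanishing statement available is that $T^n_{k,\sigma}=0$ when $k_0$ exceeds the number $c$ of through strands of the middle portion (quoted from \cite{Lee17}); states whose parameters are not tight are generically nonzero. In the paper the index set $k_0=\sum_{i=1}^m k_i$ arises from a regrouping: every non-tight state $(\kappa,\tau)$ is absorbed into the lower order terms $l(k)$ of an associated tight state $(k(\kappa,\tau),\sigma(\kappa,\tau))$ having the same number of through strands and $k_i\le\kappa_i$. There is no admissibility or annihilation argument that kills the non-tight states outright.

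Second, and more seriously, the dominance mechanism you propose---that for fixed $k$ the identity term of each Jones--Wenzl expansion wins because $\mathbf P(id)$ has extremal degree and the extra circles ``cannot make up the loss''---is not the mechanism that works, and the inequality you would need goes the wrong way. Lemma \ref{l.main} shows that the coefficient $F_{k,\sigma}$ of a competing state can \emph{exceed} the reference coefficient by as much as $\sum_i l_i(2\ell_i+l_i)$: the right-hand side of \eqref{e.ineq} is positive. What saves the argument is the change in the untwisting factors: replacing $k_i$ by $k_i+l_i$ changes $\deg\bigl(\tfrac{[2k_i+1]}{\theta(n,n,2k_i)}U(w_i,k_i)\bigr)$ by $l_i-l_i(1+2k_i+l_i)w_i$, and only the combination of the two estimates yields the bound $-2\min_i\{|w_i|-1\}\min_i\{k_i\}$ of \eqref{e.ddiff}. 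Your argument never invokes the twist parameters $w_i$, so it cannot explain why the hypothesis $|w_i|>1$ is needed in Theorem \ref{t.genstatesum}: when some $|w_i|=1$ the ``lower order terms'' need not be lower order at all. You correctly flag the dominance claim as the main obstacle, but the resolution is a degree comparison across \emph{different} fusion vectors at fixed through-strand count, with the $U(w_i,\cdot)$ factors doing the essential work, not a coefficient-versus-circles comparison at fixed $k$.
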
 
See Equation \eqref{e.binom}  for the definition of the quantum factorial.  As a direct corollary, we recover the degree formula\footnote{There is a slight difference in notation from the cited theorem where $\mathcal{G}_{c, k}$ there is our $\mathcal{G}_{k}$.} from \cite{GLV17}.
\begin{restatable}{thm}{gssr} (\cite[Theorem 3.2]{GLV17}) \label{t.genstatesum} Let $k = (k_0, k_1, \ldots, k_m) \in \mathbb{Z}_{\geq 0}^{m+1}$ with $ k_i \leq n$ and assume $|w_i| > 1$. The $n+1$ colored Jones polynomial of a pretzel link $P = P(w_0, w_1, \ldots, w_m)$ with diagram $L$ has the form 
\[ J_{P, n+1} =  ((-1)^n q^{\frac{1}{2}})^{w(L)(n^2+2n)}\sum_{k_0 = \sum_{i=1}^m k_i } \mathcal{G}_{k}, \]
where $ \deg \mathcal{G}_{k} = (-1)^{w_0(n-k_0)+n+k_0+\sum_{i=1}^m (n-k_i) (w_i-1)} q^{\delta(n, k)} + l.o.t.$\footnote{denotes lower order terms.},  and  $\delta(n, k) = $
\[ -\left( (w_0+1)k^2_0 + \sum_{i=1}^m (w_i-1)k_i^2 + \sum_{i=1}^m (-2+w_0 + w_i)k_i - \frac{n(n+2)}{2} \sum_{i=0}^m w_i + (m-1)n \right).   \]  
\end{restatable}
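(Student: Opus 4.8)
The proof of Theorem~\ref{t.genstatesum} will be a direct corollary of Theorem~\ref{t.mainintro}: the plan is to take the closed formula for $\mathcal{G}_k$ (up to lower order terms) and simply compute its maximal degree in $q$, tracking both the $q$-power and its sign. First I would recall the standard degree formulas for each of the building blocks appearing in
\[
\mathcal{G}_k = \left(\prod_{i=0}^m \frac{[2k_i+1]}{\theta(n,n,2k_i)} U(w_i,k_i)\right)\left(\prod_{i=1}^{m-1}\left(\frac{[n-\sum_{j=1}^i k_j]![n-k_{i+1}]!}{[n-\sum_{j=1}^{i+1}k_j]![n]!}\right)^2\right)\langle \mathcal{T}_{k_0}\rangle + l.o.t.,
\]
namely: the degree of the quantum integer $[2k_i+1]$, the degree of the theta-net $\theta(n,n,2k_i)$, the degree of the untwisting coefficient $U(w_i,k_i)$ (here the hypothesis $|w_i|>1$ guarantees the top term of $U$ does not cancel against anything, which is exactly why this assumption is needed), and the degrees of the quantum factorials $[x]!$ via $\deg[x]! = \tfrac{1}{2}x(x-1)$ (or the appropriate normalization matching Equation~\eqref{e.binom}). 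The sign of the leading coefficient of each factor is read off simultaneously, since each of these is $\pm$ a monic-in-$q^{1/2}$ Laurent polynomial.

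Next I would compute $\deg\langle \mathcal{T}_{k_0}\rangle$ and the sign of its leading coefficient directly from the skein element $\mathcal{T}_{k_0}$ in Figure~\ref{f.tko}: this is a small, fixed tangle closure built from Jones--Wenzl projectors on $k_0$ and $n-k_0$ strands, so its Kauffman bracket degree is a quadratic polynomial in $k_0$ and $n$ obtained from the known formula for the bracket of a projector closure together with the number of resulting loops. Then the core of the argument is bookkeeping: add up all the $q$-degrees to get a single quadratic expression in $n, k_0, k_1,\dots,k_m$, and use the constraint $k_0 = \sum_{i=1}^m k_i$ (which indexes the sum) to rewrite it in the form claimed for $\delta(n,k)$. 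In parallel, multiply all the leading-coefficient signs, collect the exponent of $-1$, and reduce it modulo $2$ to match $w_0(n-k_0)+n+k_0+\sum_{i=1}^m(n-k_i)(w_i-1)$. Finally I would verify that no cancellation occurs among the top terms of distinct summands indexed by the same $(k_0,k_1,\dots,k_m)$ — but this is precisely what Theorem~\ref{t.mainintro} asserts, since it already isolates the single highest-degree contribution $\mathcal{G}_k$ and relegates everything else to $l.o.t.$

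The main obstacle I anticipate is purely organizational rather than conceptual: assembling the many quadratic contributions (four framing-projector factorials squared per junction, theta-nets, quantum integers, untwisting coefficients, and the $\mathcal{T}_{k_0}$ bracket) without sign or normalization errors, and in particular making sure the convention for $\deg$ of quantum factorials from Equation~\eqref{e.binom} is applied consistently and that the writhe prefactor $((-1)^n q^{1/2})^{w(L)(n^2+2n)}$ is handled separately (it is pulled out front and not part of $\delta(n,k)$). A secondary subtlety is confirming that the hypothesis $|w_i|>1$ is used exactly where the leading term of $U(w_i,k_i)$ could otherwise cancel (the case $|w_i|=1$ being genuinely different, as twist regions with a single crossing behave differently under the fusion/untwisting formulas), so I would flag that point explicitly. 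Once the degree and sign of each factor are tabulated, obtaining $\delta(n,k)$ and the sign exponent is a mechanical simplification using $k_0=\sum k_i$, completing the proof.
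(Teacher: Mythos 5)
Your proposal is correct and follows essentially the same route as the paper: the result is obtained there as a direct corollary of Theorem \ref{t.ssum} (the refined form of Theorem \ref{t.mainintro}) by summing the degrees of the explicit factors of $\mathcal{G}_k$, using that $\deg\langle\mathcal{T}_{k_0}\rangle=n$ (justified via adequacy of $\mathcal{T}_{k_0}$ following \cite{Arm13}), regrouping with $k_0=\sum_{i=1}^m k_i$, and multiplying the signs of the leading coefficients. The only point to adjust is your account of the hypothesis $|w_i|>1$: since $U(w_i,k_i)$ is a monomial there is no cancellation to protect there; rather, the hypothesis is what makes the gap \eqref{e.ddiff} of Theorem \ref{t.ssum} strictly positive, ensuring the non-tight states really are lower order.
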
 
The advantage of our approach is that each term of the state sum is explicit as described by \eqref{e.ssum} even if they are lower order. For 3-tangle pretzel knots $P = P(w_0, w_1, w_2)$ we show the following.

\begin{restatable}{thm}{pretzel} \label{t.3pretzel} 
Let $w = (w_0, w_1, w_2)\in \mathbb{Z}^3$ be such that $w_0 < -1 < 0 < 1 < w_1, w_2$. Define \[s(w) =  1+w_0+\frac{1}{\sum_{i=1}^2 (w_i-1)^{-1}} \text{ and } s_1(w) =  \frac{\sum_{i=1}^2(w_i + w_0-2)(w_i-1)^{-1}}{\sum_{i=1}^2 (w_i-1)^{-1}}. \] 
Suppose $w_1$ is even and $-w_0 > \min\{w_1-1, w_2-1\}$. Let $P$ denote the pretzel knot $P(w_0, w_1, w_2)$, and let $\jd_P(n)$ be the largest power of $q$ in $J_{P, n}$.   If  $s(w) < 0$, we may write
\[ \jd_P(n) = \js_Pn^2 + \jx_P(n)n+c_P(n),\] where $\jx_P$, $c_P$ are periodic functions in $n$. 
In particular we have: 
\begin{itemize} 
 \item[(a)] For $n =\frac{-2+w_1+w_2}{\gcd(w_1-1, w_2-1)}j$,  $j\geq1$:
\begin{equation} 
 \js_P = -s(w)+w_0 + w_2, \ \jx_P(n) = -s_1(w) + 2s(w) - (m-1) - 2\frac{\min{\{w_1-1, w_2-1\}}}{-2+w_1+w_2} .   \label{e.cancel} \end{equation} 
\item[(b)] For $n \not=\frac{-2+w_1+w_2}{\gcd(w_1-1, w_2-1)}j$:
\begin{equation} \label{e.regular}
  \js_P = -s(w)+w_0 + w_2, \ \jx_P(n) = -s_1(w) + 2s(w) - (m-1). 
 \end{equation} 
 \end{itemize} 
\end{restatable}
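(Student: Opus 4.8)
## Proof Proposal for Theorem~\ref{t.3pretzel}

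The plan is to start from the explicit state-sum formula of Theorem~\ref{t.mainintro}, specialized to the case $m=2$, and extract the top degree by a careful analysis of which terms $\mathcal{G}_k$ contribute to it. Since $J_{P,n+1}$ is, up to the monomial prefactor $((-1)^nq^{1/2})^{w(L)(n^2+2n)}$, a sum over the constraint locus $k_0 = k_1 + k_2$ of terms $\mathcal{G}_k$ whose degrees are given by $\delta(n,k)$ from Theorem~\ref{t.genstatesum}, the first step is to regard $\delta(n,k)$ (plus the degree contribution $w(L)(n^2+2n)/2$ of the prefactor and the degree of $\langle\mathcal{T}_{k_0}\rangle$, which is linear in $k_0$ and $n$) as a function of the integer point $k=(k_0,k_1,k_2)$ on the hyperplane $k_0=k_1+k_2$ inside the box $0\le k_i\le n$, and to maximize it. After substituting $k_0 = k_1+k_2$, $\delta$ becomes a quadratic form in $(k_1,k_2)$ whose Hessian, under the hypotheses $w_0<-1<1<w_1,w_2$, I expect to be negative definite on the relevant subspace, so the (real) maximum is attained at an interior critical point; solving the two linear equations $\partial\delta/\partial k_1 = \partial\delta/\partial k_2 = 0$ gives $k_i = k_i^*(n)$, and this is exactly where the quantities $s(w)$ and $s_1(w)$ enter — they are built precisely to record the value of the optimized quadratic as $\js_P n^2 + (\cdots)n + (\cdots)$. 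The hypothesis $s(w)<0$ is what guarantees that the optimum lies where the contribution of $\langle\mathcal{T}_{k_0}\rangle$ and the framing/untwisting factors do not push the maximizer to the boundary of the box.

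The second and most delicate step is the passage from the real maximum to the integer maximum, and this is where the two cases (a) and (b) split. The real maximizer $k^*(n)$ has coordinates that are affine-linear in $n$ with denominator $\gcd(w_1-1,w_2-1)$ (equivalently, the critical point is a lattice point exactly when $(-2+w_1+w_2)\mid n\cdot\gcd(w_1-1,w_2-1)^{-1}\cdot(\cdots)$, i.e.\ for $n$ a multiple of $(-2+w_1+w_2)/\gcd(w_1-1,w_2-1)$). When $n$ is such a multiple, the nearest integer point on the hyperplane is the critical point itself and one gets the "clean" linear term in \eqref{e.cancel}; otherwise, the nearest admissible lattice point sits at a fixed positive distance from $k^*(n)$, and plugging it into the quadratic costs a correction that is \emph{constant in the leading order} but shifts the linear-in-$n$ coefficient — this is the origin of the extra term $-2\min\{w_1-1,w_2-1\}/(-2+w_1+w_2)$ in \eqref{e.cancel} versus \eqref{e.regular}. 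Making this precise requires: (i) identifying the integer point on $\{k_0=k_1+k_2\}\cap[0,n]^3$ closest (in the metric determined by the quadratic form) to $k^*$, (ii) checking that the parity/divisibility hypotheses "$w_1$ even" and "$-w_0>\min\{w_1-1,w_2-1\}$" are exactly what forces a unique such nearest point in each residue class of $n$, so $\jx_P$ and $c_P$ are genuinely periodic, and (iii) verifying there is no cancellation: the coefficient of the top power of $q$ in the single dominant $\mathcal{G}_k$ is nonzero (this uses that $\langle\mathcal{T}_{k_0}\rangle$ has a nonvanishing leading coefficient, and that adjacent contributing terms, if any, have strictly smaller degree or do not cancel the leading one). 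The sign formula for the leading coefficient in Theorem~\ref{t.genstatesum} then gives the claimed $(-1)^{\cdots}$, which one checks is $\pm1$ and in particular nonzero.

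Concretely, I would organize the write-up as: (1) specialize Theorem~\ref{t.mainintro}/\ref{t.genstatesum} to $m=2$, writing $\deg\mathcal{G}_k = \Phi(n,k_1,k_2)$ as an explicit quadratic polynomial on the line $k_0=k_1+k_2$ (absorbing the prefactor's degree and $\deg\langle\mathcal{T}_{k_0}\rangle$); (2) show $\Phi$ is strictly concave in $(k_1,k_2)$ under the sign hypotheses on $w$, so $\deg J_{P,n+1} = \max_{k\in\mathbb{Z}^2\cap R_n}\Phi$ with $R_n$ the feasible region, \emph{provided} the max is not killed by cancellation; (3) compute the real argmax $k^*(n)$ and its value, reading off $\js_P = -s(w)+w_0+w_2$; (4) use the hypothesis $s(w)<0$ together with $-w_0>\min\{w_1-1,w_2-1\}$ to confirm $k^*(n)$ (rounded) lies in the interior of $R_n$ for all large $n$; (5) do the lattice-rounding analysis split by $n \bmod \tfrac{-2+w_1+w_2}{\gcd(w_1-1,w_2-1)}$, using "$w_1$ even" to pin down the rounding, yielding the two formulas for $\jx_P(n)$; (6) confirm no cancellation at top degree via the nonvanishing leading coefficient of $\langle\mathcal{T}_{k_0}\rangle$ and the isolation of the dominant term, and read off that $c_P(n)$ is periodic. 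The main obstacle I anticipate is step (5) together with the no-cancellation check in step (6): one must be sure that when two integer points are equidistant from $k^*$ their contributions do not conspire to cancel the leading coefficient, and that the "$+l.o.t.$" in Theorem~\ref{t.mainintro} really is of strictly lower degree near the maximizer — this is exactly the kind of cancellation control that the refined expansion~\eqref{e.ssum} was designed to provide, so I would lean on the explicit quantum-factorial form of $\mathcal{G}_k$ there rather than on~\cite{GLV17} alone.
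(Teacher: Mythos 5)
Your first step---specializing to $m=2$, viewing $\deg\mathcal{G}_k=\delta(n,k)$ as a concave quadratic on the slice $k_0=k_1+k_2$, finding the real maximizer $k^*(n)$, and reading $\js_P$, $s(w)$, $s_1(w)$ off the optimized value---is the same as the paper's. The gap is in the mechanism you assign to the dichotomy between (a) and (b). You attribute the extra term $-2\min\{w_1-1,w_2-1\}/(-2+w_1+w_2)$ to lattice rounding: the nearest integer point lies at a fixed positive distance from $k^*(n)$ and plugging it into the quadratic costs a correction. This cannot work quantitatively: the Hessian of $\delta$ in $k$ has constant entries and the nearest lattice point is at bounded distance from $k^*(n)$, so rounding changes the optimal value by $O(1)$ and can only affect $c_P(n)$, never the coefficient of $n$. (Your own sentence says the correction is ``constant in the leading order but shifts the linear-in-$n$ coefficient,'' which is self-contradictory; you also have the cases inverted---the resonant values of $n$ give the \emph{lower} degree \eqref{e.cancel}, not the clean formula.) The actual drop in case (a) is $2\min\{w_1-1,w_2-1\}\,j/g=2\min\{w_1-1,w_2-1\}\,n/(-2+w_1+w_2)$, which grows linearly in $n$; no rounding argument produces that.

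The real mechanism---and the content of the theorem---is exact cancellation of top-degree terms, which your step (6) proposes to rule out rather than to compute. Because $w_0,w_2$ are odd and $w_1$ is even, the sign of the leading coefficient of $\mathcal{G}_k$ in Theorem \ref{t.genstatesum} reduces to $(-1)^{n-k_1}$, so two states whose $k_1$'s differ by $1$ carry opposite signs. The arithmetic condition in (a) is exactly the condition that the real maximizer $x_1^*$ is a half-integer, in which case the two lattice maximizers $k_1=x_1^*\pm\tfrac12$ have equal degree and opposite leading signs, and their leading terms cancel exactly. One must then subtract the full rational functions $G_{\mathbf{k}_u,\sigma_u}-G_{\mathbf{k}'_u,\sigma'_u}$ using the explicit quantum-factorial formulas (this is precisely what the refined expansion \eqref{e.ssum} is for), show the difference has degree lower by $2\min\{w_1-1,w_2-1\}j/g$, and then verify that all other states---the non-tight ones via \eqref{e.ddiff} and the pairs with $u>0$---sit strictly further below the surviving degree. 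Without carrying out this cancellation computation your argument can only ever yield formula \eqref{e.regular} for all $n$, and misses case (a) entirely.
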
 
 \begin{eg} The pretzel knot $P(-5, 4, 3)$. Note $s(w) = -\frac{14}{5} < 0$ and $s_1(w) = -\frac{18}{5} $.  Since \[\frac{\min\{w_1-1, w_2-1\}}{\gcd(w_1-1, w_2-1)} =\frac{\min\{4-1, 3-1\}}{\gcd(4-1, 3-1)} = 2, \] we have by Theorem \ref{t.3pretzel}, 
 for $n =  0 \Mod 5$, $\js_P =\frac{4}{5}$, and $\jx_P(n) = -3-\frac{4}{5}$. Otherwise for $n \not=  0 \Mod 5$, $\js_P =\frac{4}{5}$, and $\jx_P(n) = -3$. 
 \end{eg} 
The explicitly calculated \emph{cancellation} of the terms in the expansion \eqref{e.ssum} of the $n$ colored Jones polynomial is the decrease by $2\frac{\min{\{w_1-1, w_2-1\}}}{-2+w_1+w_2}$ in $\jx_P(n)$ in \eqref{e.cancel}  for certain congruence of $n$. This is of interest as there is yet to be an explanation for the deviation of $\jx_P(n)$ from the Euler characteristic of an essential surface in the complement of $P$ from the viewpoint of the strong slope conjecture, see  \cite{Ga:slope, KT15}. 
We expect that the expansion would also apply to find explicit stable coefficients studied in \cite{Arm13, GL15}, and we will address this question in the future. 

This paper is organized as follows: In Section \ref{s.exjw} we summarize the background necessary to understand \eqref{e.ssum} by expanding the Jones-Wenzl projectors in the canonical basis. In Section \ref{s.org}, we prove lemmas useful for comparing degrees of $G_{k, \sigma}$. Then we prove Theorem \ref{t.mainintro} in Section \ref{s.degp}, show how Theorem \ref{t.genstatesum} follows, and conclude with the proof of Theorem \ref{t.3pretzel} in Section  \ref{ss.3pretzel}. 

\section{Preliminaries} \label{s.exjw}
 We follow \cite{Kho97}. 
We define for non-negative integers $n, k$, and indeterminate $q$,
\begin{equation}  \label{e.binom}
[n] = \frac{q^n - q^{-n}}{q-q^{-1}}, \qquad [n]! = [n][n-1]\cdots [1],\end{equation} with the convention that $[0]! = 1$, and the binomial coefficient from the quantum factorial
\begin{equation} \label{e.qfac} \left[ \begin{array}{c} n \\ k \end{array} \right] = \frac{[n]!}{[k]![n-k]!}.  \end{equation}  
\begin{defn} \label{d.skein} Let $\mathbb{C}(q)$ be the field of rational functions in $q$ with complex coefficients. Given a connected, orientable surface $F$, the skein module $K(F)$ is the vector space over $\mathbb{C}(q)$ generated by the set of isotopy classes of framed unoriented and properly embedded link/tangle diagrams  $T$  modulo the Kauffman skein relations\footnote{Our convention differs from \cite{Kho97} in that $q$ there is our $q^{-1}$.} \cite{Kau87}: 
\begin{enumerate}
\item $\TikCircle  \sqcup T = (-q-q^{-1}) T $ 
\item $\vcenter{\hbox{\ocrossing}} = q^{-1/2} \  \vcenter{\hbox{\ares}} + q^{1/2} \ \vcenter{\hbox{\bres}}$
\end{enumerate} 
\end{defn} 
Let $D^2$ be a disk in the plane whose boundary is viewed as a rectangle, with $n$ marked points above and below. The \emph{Temperley-Lieb algebra} $TL_n$ is the skein module $K(D^2)$ of tangle/link diagrams in this disk, where we restrict to tangle diagrams whose endpoints are a subset of the $2n$ marked points on the boundary of the disk. There is a natural multiplication of two elements $T_1, T_2 \mapsto T_1 \circ T_2$  in $TL_n$ by stacking the square containing $T_1$ above the square containing $T_2$ and identifying the $n$-boundary points. See Figure \ref{f.tlstack} below. 
\begin{figure}[H]
\def \svgwidth{.2\textwidth}
\begingroup%
  \makeatletter%
  \providecommand\color[2][]{%
    \errmessage{(Inkscape) Color is used for the text in Inkscape, but the package 'color.sty' is not loaded}%
    \renewcommand\color[2][]{}%
  }%
  \providecommand\transparent[1]{%
    \errmessage{(Inkscape) Transparency is used (non-zero) for the text in Inkscape, but the package 'transparent.sty' is not loaded}%
    \renewcommand\transparent[1]{}%
  }%
  \providecommand\rotatebox[2]{#2}%
  \newcommand*\fsize{\dimexpr\f@size pt\relax}%
  \newcommand*\lineheight[1]{\fontsize{\fsize}{#1\fsize}\selectfont}%
  \ifx\svgwidth\undefined%
    \setlength{\unitlength}{659.07463074bp}%
    \ifx\svgscale\undefined%
      \relax%
    \else%
      \setlength{\unitlength}{\unitlength * \real{\svgscale}}%
    \fi%
  \else%
    \setlength{\unitlength}{\svgwidth}%
  \fi%
  \global\let\svgwidth\undefined%
  \global\let\svgscale\undefined%
  \makeatother%
  \begin{picture}(1,0.39511582)%
    \lineheight{1}%
    \setlength\tabcolsep{0pt}%
    \put(0,0){\includegraphics[width=\unitlength,page=1]{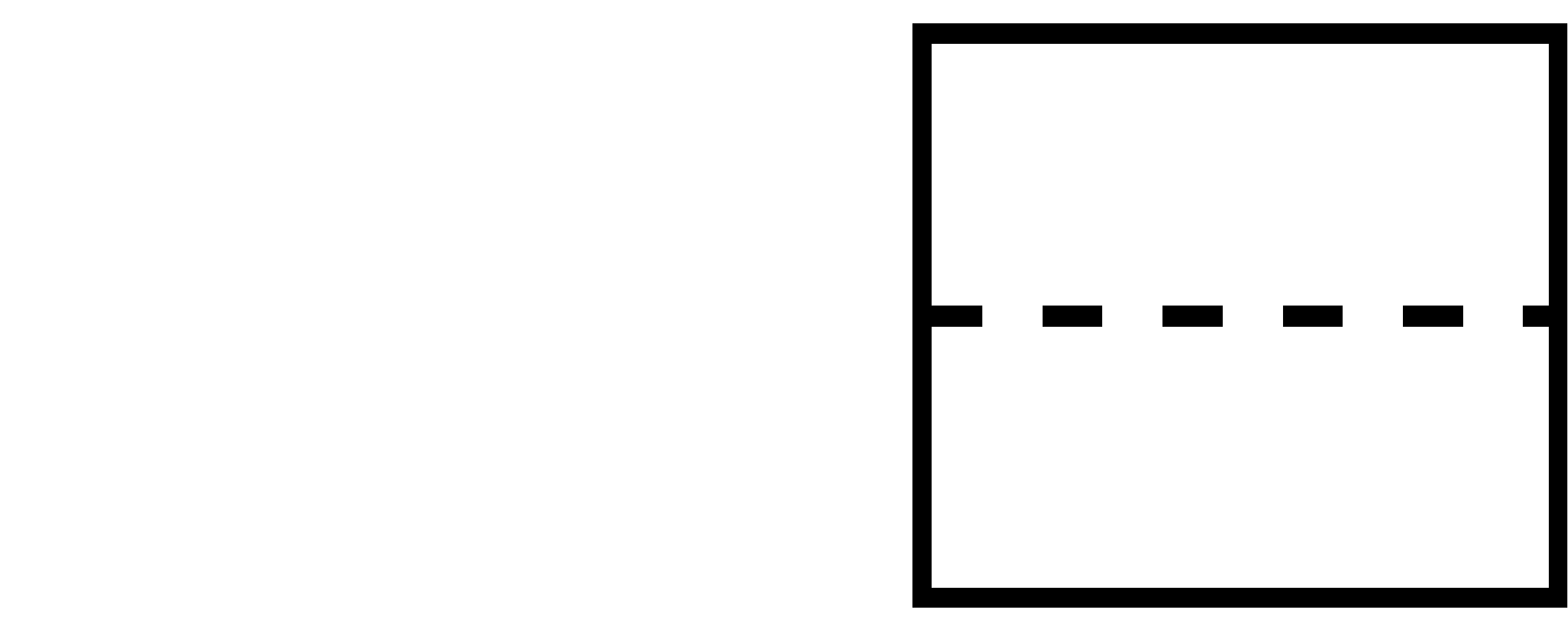}}%
    \put(0.71329426,0.26186113){\makebox(0,0)[lt]{\lineheight{1.25}\smash{\begin{tabular}[t]{l}$T_1$\end{tabular}}}}%
    \put(0.72033586,0.07735974){\makebox(0,0)[lt]{\lineheight{1.25}\smash{\begin{tabular}[t]{l}$T_2$\end{tabular}}}}%
    \put(-0.00229667,0.18383465){\makebox(0,0)[lt]{\lineheight{1.25}\smash{\begin{tabular}[t]{l}$T_1 \circ T_2=$\end{tabular}}}}%
    \put(0,0){\includegraphics[width=\unitlength,page=2]{tlstack.pdf}}%
  \end{picture}%
\endgroup%
 
\caption{\label{f.tlstack} $T_1 \circ T_2$ in $TL_n$.}
\end{figure} 

In this paper the rectangles $D^2$ with $2n$ marked boundary points defining $TL_n$ will sometimes be depicted with a rotation from the horizontal position of Figure \ref{f.tlstack}. One can infer the direction of the multiplication based on the context of the figures. An algebra generator $U_i$ of $TL_n$ is shown below. 
\begin{figure}[H] 
\def \svgwidth{.2\textwidth}
\begingroup%
  \makeatletter%
  \providecommand\color[2][]{%
    \errmessage{(Inkscape) Color is used for the text in Inkscape, but the package 'color.sty' is not loaded}%
    \renewcommand\color[2][]{}%
  }%
  \providecommand\transparent[1]{%
    \errmessage{(Inkscape) Transparency is used (non-zero) for the text in Inkscape, but the package 'transparent.sty' is not loaded}%
    \renewcommand\transparent[1]{}%
  }%
  \providecommand\rotatebox[2]{#2}%
  \newcommand*\fsize{\dimexpr\f@size pt\relax}%
  \newcommand*\lineheight[1]{\fontsize{\fsize}{#1\fsize}\selectfont}%
  \ifx\svgwidth\undefined%
    \setlength{\unitlength}{114.08927147bp}%
    \ifx\svgscale\undefined%
      \relax%
    \else%
      \setlength{\unitlength}{\unitlength * \real{\svgscale}}%
    \fi%
  \else%
    \setlength{\unitlength}{\svgwidth}%
  \fi%
  \global\let\svgwidth\undefined%
  \global\let\svgscale\undefined%
  \makeatother%
  \begin{picture}(1,1.02039409)%
    \lineheight{1}%
    \setlength\tabcolsep{0pt}%
    \put(0,0){\includegraphics[width=\unitlength,page=1]{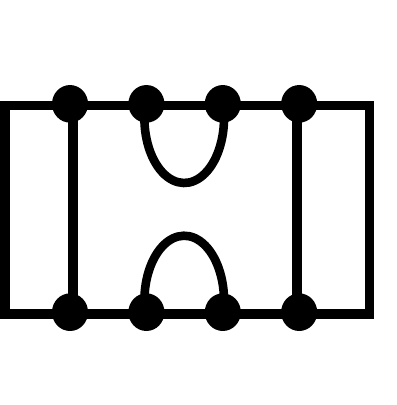}}%
    \put(0.30167024,0.0309003){\makebox(0,0)[lt]{\lineheight{1.25}\smash{\begin{tabular}[t]{l}$i$\end{tabular}}}}%
    \put(0.48573681,0.0309003){\makebox(0,0)[lt]{\lineheight{1.25}\smash{\begin{tabular}[t]{l}$i+1$\end{tabular}}}}%
    \put(0.1964893,0.88549416){\makebox(0,0)[lt]{\lineheight{1.25}\smash{\begin{tabular}[t]{l}$U_i$\end{tabular}}}}%
  \end{picture}%
\endgroup%

\caption{An algebra generator $U_i$ of $TL_n$}
\end{figure} 

The canonical dual basis $B^{TL}_n$ of $TL_n$ is the set of all crossingless matchings of the $2n$ boundary points of $D^2$.

\begin{defn}  \label{d.jw} The Jones-Wenzl projector $\vcenter{\hbox{\tiny{\def \svgwidth{.025\columnwidth}}}}_n$ is an element in $TL_n$ that is uniquely characterized by the following properties: 
\begin{itemize}
\item $\vcenter{\hbox{\tiny{\def \svgwidth{.025\columnwidth}}}}_n \circ \vcenter{\hbox{\tiny{\def \svgwidth{.025\columnwidth}}}}_n= \vcenter{\hbox{\tiny{\def \svgwidth{.025\columnwidth}}}}_n, n\geq 1.$
\item $\vcenter{\hbox{\tiny{\def \svgwidth{.025\columnwidth}}}}_n \circ U_i = 0 = U_i \circ \vcenter{\hbox{\tiny{\def \svgwidth{.025\columnwidth}}}}_n, 1\leq i \leq n-1.$ 
\end{itemize} 
\end{defn} 

\begin{defn} \label{d.dex}
Let $d$ be an element in the canonical dual basis $B^{TL}_n$ of $TL_n$. We define $P(d)$ to be the coefficient in the expansion of the Jones-Wenzl projector $JW_n$ in $TL_n$.  
\[ [n]!\vcenter{\hbox{\tiny{\def \svgwidth{.025\columnwidth}}}}_n = \sum_{d\in B^{TL}_n} P(d) d. \] 
$P(d)$ is a rational function in $q$ with complex coefficients. We will denote by $\P(d)$ the coefficient divided by $[n]!$: 
\[ \P(d) = \frac{P(d)}{[n]!}. \] 
\end{defn} 
Henceforth, whenever we say ``expansion of the Jones-Wenzl projector," we shall mean the expansion of the Jones-Wenzl projector in terms of the canonical basis as in Definition \ref{d.dex}. 

\begin{defn} \label{d.kbracket}
Consider the skein module $K(\mathbb{R}^2)$ of the plane $\mathbb{R}$.  The \emph{Kauffman bracket} of a skein element $\sk$ in $K(\mathbb{R}^2)$, denoted by $\langle \sk \rangle$, is the rational function in $q$ multiplying the empty diagram after resolving $\sk$ via the Kauffman skein relations.
\end{defn} 

\begin{defn} \label{d.cjp}
Given a link diagram $L$ and an integer $n\geq 2$, cable a Jones-Wenzl projector in each component and denote the resulting diagram by $L^n$. Then the $n+1$ colored Jones polynomial $J_{L, n+1}$ may be defined as 
\[ J_{L, n+1} = ((-1)^n q^{1/2})^{\omega(L)(n^2+2n)}  \langle L^n \rangle. \]
\end{defn} 
 This material is well known and interested readers may consult \cite{Oh01}, \cite{Lic97} for additional background. 
\paragraph{\textbf{Normalization}}
With our conventions, we have that the $n$ \emph{(unreduced) colored Jones polynomial} of the unknot  is
 \[ J_{\Circle, n} = (-1)^{n-1} [n].\] 

We will be using the following lemmas from \cite{Kho97}. All symbols $x, y, z, t, a, b, c, k, n$ will denote non-negative integers.  A non-negative integer next to a strand indicates that number of parallel strands. 
\begin{lem}{\cite[Proposition 4.8]{Kho97}}  \label{l.ccup}
Let $d_1$ and $d_2$ be two diagrams in $TL_n$ that differ as shown below
\begin{figure}[H]
\def \svgwidth{.4\columnwidth}
\begingroup%
  \makeatletter%
  \providecommand\color[2][]{%
    \errmessage{(Inkscape) Color is used for the text in Inkscape, but the package 'color.sty' is not loaded}%
    \renewcommand\color[2][]{}%
  }%
  \providecommand\transparent[1]{%
    \errmessage{(Inkscape) Transparency is used (non-zero) for the text in Inkscape, but the package 'transparent.sty' is not loaded}%
    \renewcommand\transparent[1]{}%
  }%
  \providecommand\rotatebox[2]{#2}%
  \newcommand*\fsize{\dimexpr\f@size pt\relax}%
  \newcommand*\lineheight[1]{\fontsize{\fsize}{#1\fsize}\selectfont}%
  \ifx\svgwidth\undefined%
    \setlength{\unitlength}{1069.28573488bp}%
    \ifx\svgscale\undefined%
      \relax%
    \else%
      \setlength{\unitlength}{\unitlength * \real{\svgscale}}%
    \fi%
  \else%
    \setlength{\unitlength}{\svgwidth}%
  \fi%
  \global\let\svgwidth\undefined%
  \global\let\svgscale\undefined%
  \makeatother%
  \begin{picture}(1,0.24213923)%
    \lineheight{1}%
    \setlength\tabcolsep{0pt}%
    \put(0,0){\includegraphics[width=\unitlength,page=1]{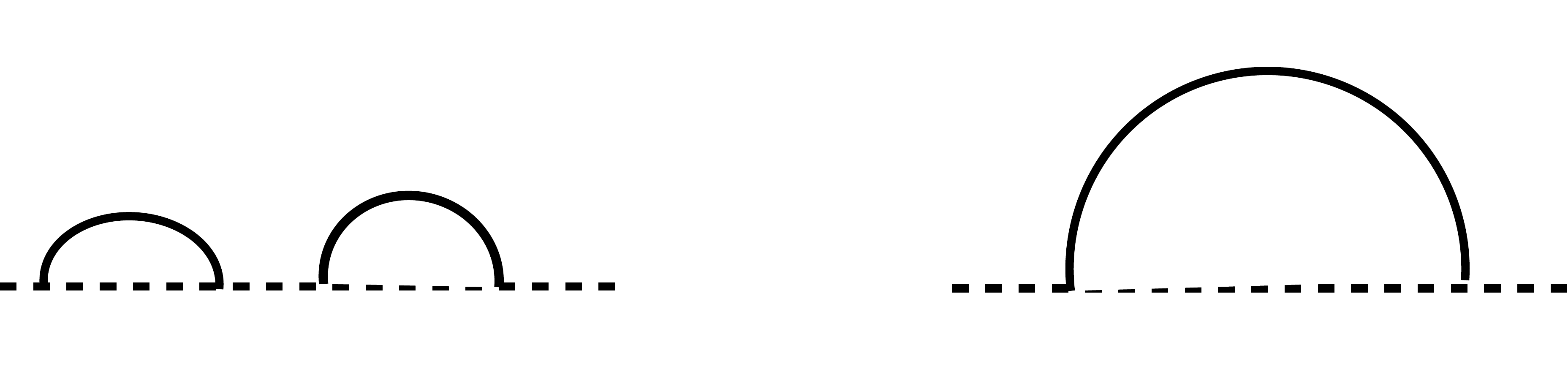}}%
    \put(0.05611222,0.14357754){\makebox(0,0)[lt]{\lineheight{1.25}\smash{\begin{tabular}[t]{l}$x$\end{tabular}}}}%
    \put(0.22444889,0.14357754){\makebox(0,0)[lt]{\lineheight{1.25}\smash{\begin{tabular}[t]{l}$y$\end{tabular}}}}%
    \put(0.7855711,0.22774585){\makebox(0,0)[lt]{\lineheight{1.25}\smash{\begin{tabular}[t]{l}$x+y$\end{tabular}}}}%
    \put(0.14028056,0.00329697){\makebox(0,0)[lt]{\lineheight{1.25}\smash{\begin{tabular}[t]{l}$d_1$\end{tabular}}}}%
    \put(0.75751504,0.00329697){\makebox(0,0)[lt]{\lineheight{1.25}\smash{\begin{tabular}[t]{l}$d_2$\end{tabular}}}}%
  \end{picture}%
\endgroup%

\caption{}
\end{figure} 
They are the same outside the part that is shown. Then 
\[ P(d_1)  = \left[ \begin{array}{c} x+y\\x\end{array} \right] P(d_2) . \]

\end{lem}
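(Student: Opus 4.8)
The plan is to reduce the statement to the defining properties of the Jones--Wenzl projector and an explicit ``cap-to-projector'' recursion. First, I would recall the standard recursion for $\jwn_n$ (Wenzl's recursion), which expresses the projector on $n$ strands in terms of the projector on $n-1$ strands together with a single Temperley--Lieb generator $U_{n-1}$ with coefficient $\frac{[n-1]}{[n]}$. More relevant here, however, is the ``partial trace'' / absorption identity: if one stacks a cup joining strands $x,\dots,x+y$ in a prescribed way against $\jwn_{x+y}$, the result is a scalar multiple of $\jwn_x \otimes \jwn_y$ (or of the corresponding smaller diagram), and that scalar is a ratio of quantum factorials. The two diagrams $d_1$ and $d_2$ in the figure differ precisely by such a local replacement: $d_1$ has $x$ and $y$ parallel strands entering a region, while $d_2$ has $x+y$ parallel strands, the rest being identical. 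So the claim is that passing from the ``$x+y$ strands'' local picture to the ``$x,y$ separated'' local picture multiplies the projector coefficient by the quantum binomial $\left[\begin{array}{c}x+y\\x\end{array}\right]$.

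The key steps, in order: (1) Write $[n]!\,\jwn_n = \sum_{d} P(d)\,d$ and observe that the coefficient $P(d)$ of a fixed basis diagram $d$ can be extracted by pairing $\jwn_n$ against the dual basis element for $d$ in the bilinear form on $TL_n$; since $d_1$ and $d_2$ are related by a local move that does not touch the rest of the diagram, it suffices to prove the identity for the local tangles, i.e. reduce to the case where the ``rest of the diagram'' is trivial. (2) In that local model, identify the coefficient of the all-through-strand identity diagram (the generator of the relevant $\mathrm{Hom}$ space) on each side. The left side involves $\jwn_{x+y}$ ``capped down'' into $x$ and $y$ groups, and on the right side involves $\jwn_{x+y}$ with $x+y$ through strands; the ratio of the corresponding coefficients is exactly the quantum multinomial $\left[\begin{array}{c}x+y\\x\end{array}\right]$, which is the number (weighted appropriately) counting the ways the Jones--Wenzl expansion distributes among the two groups. (3) Conclude by the multiplicativity of $P$ under this local surgery: since $\jwn_n\circ U_i = 0$, any diagram in the expansion that does not fit the ``parallel'' pattern contributes zero to the pairing, leaving only the stated scalar.

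I expect the main obstacle to be step (2): making precise, from first principles, why the ratio of the two local coefficients is the quantum binomial $\left[\begin{array}{c}x+y\\x\end{array}\right]$ and not some other ratio of quantum factorials. The cleanest route is probably an induction on $y$ using Wenzl's recursion: for $y=1$ one uses the single-strand absorption identity (a cup on one strand against $\jwn_{x+1}$ yields $\frac{[x+1]}{[1]} = [x+1]$ times $\jwn_x$ up to framing), and then the inductive step stacks these, so that the accumulated scalar telescopes into $\frac{[x+1][x+2]\cdots[x+y]}{[1][2]\cdots[y]} = \left[\begin{array}{c}x+y\\x\end{array}\right]$. One must be careful that no stray closed loops (contributing factors of $-q-q^{-1}$) are created in the process — this is exactly the point where the hypothesis that $d_1,d_2$ ``are the same outside the part shown'' and that the shown parts are the pictured cup/parallel-strand configurations is used. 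Once the local identity is established, the globalization in steps (1) and (3) is routine, and one reads off $P(d_1) = \left[\begin{array}{c}x+y\\x\end{array}\right]P(d_2)$.
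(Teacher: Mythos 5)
The paper does not actually prove this lemma: it is imported verbatim as \cite[Proposition 4.8]{Kho97} and used as a black box, so there is no in-paper argument to compare yours against; I am therefore judging your sketch on its own. It has a genuine gap at exactly the point you flag as the ``main obstacle,'' and the surrounding reductions are not secured either. First, extracting $P(d)$ ``by pairing against the dual basis element for $d$'' is not available for free: the crossingless matchings are not orthogonal under the Temperley--Lieb trace form (its Gram matrix is far from diagonal), so a cap-pairing does not isolate a single coefficient, and constructing an honest dual basis is itself a substantial part of \cite{Kho97}. Second, the reduction ``it suffices to prove the identity for the local tangles'' presupposes that the ratio $P(d_1)/P(d_2)$ depends only on the part of the diagrams shown. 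That locality is precisely the content of the proposition; assuming it is circular.

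Third, and most importantly, the engine you propose for step (2) --- Wenzl's recursion and the partial-trace/absorption identity --- consists of statements about the projector as a single element (capping a strand of the projector on $x+1$ strands returns a scalar multiple of the projector on $x$ strands). These do not by themselves relate the coefficient of one basis diagram to that of another inside the same expansion $[n]!\,\jwn_n=\sum_d P(d)\,d$. The correct mechanism for producing relations among the $P(d)$ is to apply $U_i\circ\bigl(\sum_d P(d)\,d\bigr)=0$: each basis diagram is sent either to $(-q-q^{-1})$ times a diagram or to another basis diagram, and collecting the coefficient of each output diagram yields a linear system determining the $P(d)$ recursively. Deriving the clean ratio $\left[\begin{array}{c}x+y\\x\end{array}\right]$ from that system requires tracking exactly which diagrams collide; your telescoping product $\frac{[x+1]\cdots[x+y]}{[1]\cdots[y]}$ is the arithmetically correct target but is asserted rather than obtained. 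As written, the proposal is a plausible plan with the decisive computation missing; to complete it you would either carry out the $U_i$-relation bookkeeping in full or simply cite \cite{Kho97} as the paper does.
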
 
\begin{lem}{\cite[Proposition 4.9]{Kho97}}  \label{l.Khslide}
Let $d_1, d_2$ be two diagrams from $TL_n$ that differ as depicted below. 
\begin{figure}[H]
\def \svgwidth{.4\columnwidth}
\begingroup%
  \makeatletter%
  \providecommand\color[2][]{%
    \errmessage{(Inkscape) Color is used for the text in Inkscape, but the package 'color.sty' is not loaded}%
    \renewcommand\color[2][]{}%
  }%
  \providecommand\transparent[1]{%
    \errmessage{(Inkscape) Transparency is used (non-zero) for the text in Inkscape, but the package 'transparent.sty' is not loaded}%
    \renewcommand\transparent[1]{}%
  }%
  \providecommand\rotatebox[2]{#2}%
  \newcommand*\fsize{\dimexpr\f@size pt\relax}%
  \newcommand*\lineheight[1]{\fontsize{\fsize}{#1\fsize}\selectfont}%
  \ifx\svgwidth\undefined%
    \setlength{\unitlength}{407.83460242bp}%
    \ifx\svgscale\undefined%
      \relax%
    \else%
      \setlength{\unitlength}{\unitlength * \real{\svgscale}}%
    \fi%
  \else%
    \setlength{\unitlength}{\svgwidth}%
  \fi%
  \global\let\svgwidth\undefined%
  \global\let\svgscale\undefined%
  \makeatother%
  \begin{picture}(1,0.40262773)%
    \lineheight{1}%
    \setlength\tabcolsep{0pt}%
    \put(0,0){\includegraphics[width=\unitlength,page=1]{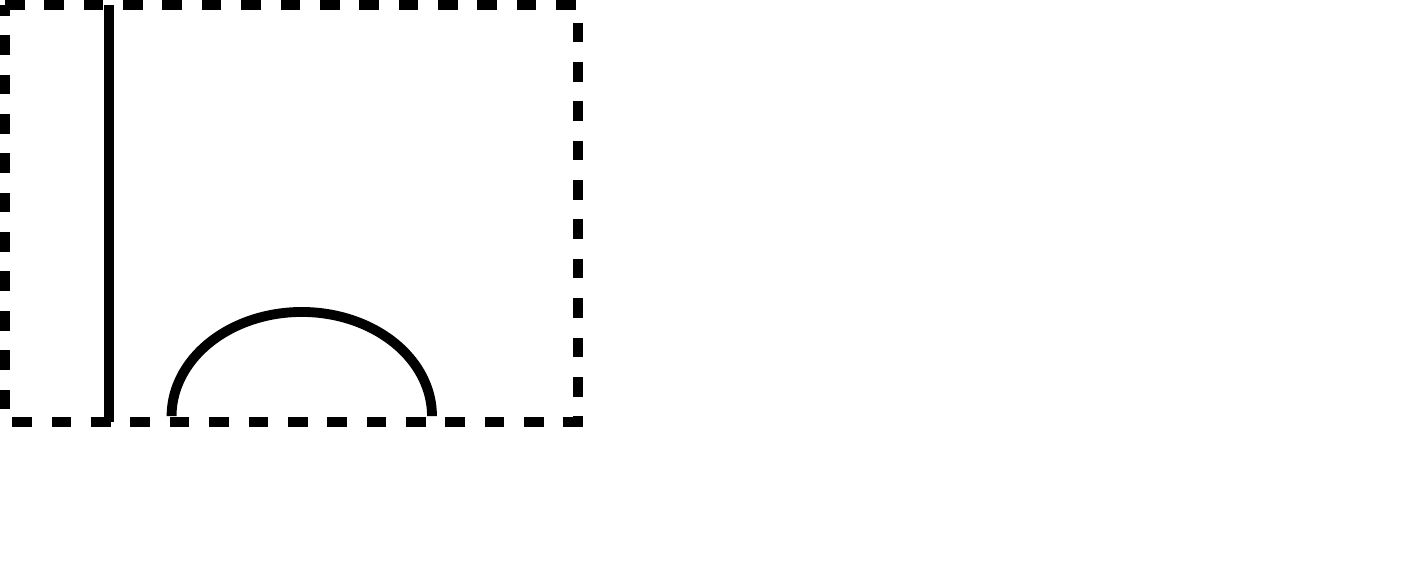}}%
    \put(0.0807124,0.01296626){\color[rgb]{0,0,0}\makebox(0,0)[lt]{\lineheight{1.25}\smash{\begin{tabular}[t]{l}$d_1$\end{tabular}}}}%
    \put(0.08491582,0.30194919){\color[rgb]{0,0,0}\makebox(0,0)[lt]{\lineheight{1.25}\smash{\begin{tabular}[t]{l}$x$\end{tabular}}}}%
    \put(0.26093253,0.21525439){\color[rgb]{0,0,0}\makebox(0,0)[lt]{\lineheight{1.25}\smash{\begin{tabular}[t]{l}$y$\end{tabular}}}}%
    \put(0,0){\includegraphics[width=\unitlength,page=2]{mstrand.pdf}}%
    \put(0.66918624,0.01296626){\color[rgb]{0,0,0}\makebox(0,0)[lt]{\lineheight{1.25}\smash{\begin{tabular}[t]{l}$d_2$\end{tabular}}}}%
    \put(0,0){\includegraphics[width=\unitlength,page=3]{mstrand.pdf}}%
    \put(0.68915232,0.29669497){\color[rgb]{0,0,0}\makebox(0,0)[lt]{\lineheight{1.25}\smash{\begin{tabular}[t]{l}$x$\end{tabular}}}}%
    \put(0.61033874,0.13801718){\color[rgb]{0,0,0}\makebox(0,0)[lt]{\lineheight{1.25}\smash{\begin{tabular}[t]{l}$y$\end{tabular}}}}%
  \end{picture}%
\endgroup%

\caption{}
\end{figure} 
They are the same outside the part that is shown. Then 
\[ P(d_1) = \left[ \begin{array}{c}x+y \\ x \end{array} \right] P(d_2). \] 
\end{lem}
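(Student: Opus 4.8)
The plan is to prove Lemma~\ref{l.Khslide} by the mechanism behind Lemma~\ref{l.ccup}, both being part of Khovanov's computation of the coefficients $P(d)$ in \cite{Kho97}. First I would try to reduce the slide move to Lemma~\ref{l.ccup}: in the sub-disk where $d_1$ and $d_2$ differ, a bundle of $y$ parallel strands has been moved from one side of a bundle of $x$ parallel strands to the other, which is not an ambient isotopy of crossingless matchings. I would factor this change through an intermediate crossingless matching in which the two bundles are merged into a single bundle of $x+y$ strands, apply Lemma~\ref{l.ccup} on each side, and use $\left[\begin{smallmatrix}x+y\\x\end{smallmatrix}\right]=\left[\begin{smallmatrix}x+y\\y\end{smallmatrix}\right]$ together with the cancellation of the ``re-splitting'' factor to obtain $P(d_1)=\left[\begin{smallmatrix}x+y\\x\end{smallmatrix}\right]P(d_2)$.

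If this reduction is not clean, I would prove the lemma directly. Expand $[n]!\,\mathrm{JW}_n=\sum_{d\in B_n^{TL}}P(d)\,d$. The defining properties $\mathrm{JW}_n\circ\mathrm{JW}_n=\mathrm{JW}_n$ and $U_i\circ\mathrm{JW}_n=\mathrm{JW}_n\circ U_i=0$ yield the standard absorption facts: $\mathrm{JW}_n$ kills any turnback on two or more adjacent strands, and $\mathrm{JW}_n=\mathrm{JW}_n\circ(\mathrm{JW}_a\otimes\mathrm{JW}_b)$ for $a+b=n$. Post-composing the expansion with a generator $U_j$ supported in the local region and collecting the resulting crossingless matchings (the map $d\mapsto d\circ U_j$ is many-to-one, with each closed loop contributing $-[2]$) produces a linear relation among a small set of coefficients $P(d)$, including $P(d_1)$ and $P(d_2)$. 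Running this over the relevant $j$, with base case $P(1_n)=[n]!$, determines the coefficients; the quantum binomial appears because the induced recursion along a bundle of parallel strands is exactly the $q$-Pascal recursion $\left[\begin{smallmatrix}a\\b\end{smallmatrix}\right]=q^{b}\left[\begin{smallmatrix}a-1\\b\end{smallmatrix}\right]+q^{b-a}\left[\begin{smallmatrix}a-1\\b-1\end{smallmatrix}\right]$. An equivalent route is induction on $n$ via Wenzl's recursion $\mathrm{JW}_n=(\mathrm{JW}_{n-1}\otimes 1)-\tfrac{[n-1]}{[n]}(\mathrm{JW}_{n-1}\otimes 1)\,U_{n-1}\,(\mathrm{JW}_{n-1}\otimes 1)$ (up to the sign conventions of Definition~\ref{d.skein}), peeling off the outermost strand of one bundle and matching the two resulting pictures with $TL_{n-1}$-diagrams covered by the inductive hypothesis.

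The main obstacle is bookkeeping rather than ideas: keeping track of the framing signs and the half-integer powers of $q$ that appear when resolving crossings and closed loops in the convention of Definition~\ref{d.skein} (which differs from \cite{Kho97} by $q\leftrightarrow q^{-1}$, as noted there), and verifying that the many-to-one identification of crossingless matchings under $\circ\,U_j$ collapses to the $q$-Pascal recursion rather than to a monomial multiple of it. I would structure the write-up so that Lemma~\ref{l.Khslide} follows from Lemma~\ref{l.ccup} with as little additional computation as possible, isolating the sign and convention bookkeeping in the proof of Lemma~\ref{l.ccup}.
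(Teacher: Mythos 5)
The paper does not prove this lemma at all: it is imported verbatim as Proposition 4.9 of \cite{Kho97}, alongside Lemmas \ref{l.ccup} and \ref{l.khprop} (Propositions 4.8 and 4.10 there), so there is no in-paper argument to compare yours against. Judged on its own terms, your proposal is a plan rather than a proof, and its primary route has a concrete flaw. If you relate both $d_1$ and $d_2$ to a common intermediate matching $d_3$ in which the $x$- and $y$-bundles are merged into a single $(x+y)$-bundle, then Lemma \ref{l.ccup} gives $P(d_1)=\left[\begin{smallmatrix}x+y\\x\end{smallmatrix}\right]P(d_3)$ and $P(d_2)=\left[\begin{smallmatrix}x+y\\y\end{smallmatrix}\right]P(d_3)$, and since $\left[\begin{smallmatrix}x+y\\x\end{smallmatrix}\right]=\left[\begin{smallmatrix}x+y\\y\end{smallmatrix}\right]$ this yields $P(d_1)=P(d_2)$, contradicting the statement you are trying to prove whenever the binomial is not $1$. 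The slide move of Proposition 4.9 is genuinely asymmetric between $d_1$ and $d_2$ in a way the merge/split of Proposition 4.8 is not, and no ``cancellation of the re-splitting factor'' can rescue a reduction in which both diagrams are compared to the same merged picture by the same lemma.

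Your fallback route is sound in outline and is essentially how such coefficient formulas are established: composing $[n]!\,\jwn_n=\sum_d P(d)\,d$ with a generator $U_j$ and using $\jwn_n\circ U_j=0$ gives, for each crossingless matching $e$ with bottom points $j,j+1$ joined, the relation $-[2]\,P(e)+\sum_{d\neq e,\ d\circ U_j=e}P(d)=0$, and these relations together with the normalization $P(1_n)=[n]!$ (consistent with Lemma \ref{l.khprop} at $y=0$) determine all coefficients; the induction via Wenzl's recursion is an equivalent packaging. But as written you have not exhibited the specific relations that isolate $P(d_1)$ and $P(d_2)$ for the local configuration in the figure, nor verified that they collapse to the $q$-Pascal identity rather than some monomial-twisted variant, so the quantum binomial is asserted rather than derived. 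To turn this into a proof you would need to induct on $y$, sliding one strand at a time, identify the handful of matchings adjacent to $d_1$ and $d_2$ under the relevant $U_j$, and check the base case, where the content is the identity $\left[\begin{smallmatrix}x+1\\1\end{smallmatrix}\right]=[x+1]$. Since the paper deliberately treats this as a black box from \cite{Kho97}, citing Proposition 4.9 directly is the appropriate course unless you intend to reprove the Frenkel--Khovanov coefficient formulas in full.
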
 
\begin{lem}{\cite[Proposition 4.10]{Kho97}} \label{l.khprop}
Let $d$ be a diagram
as shown, where $x, y, z, t$ are non-negative integers satisfying $x+y+z+t\geq 1$. 
\begin{figure}[H]
\def \svgwidth{.2\columnwidth}
\begingroup%
  \makeatletter%
  \providecommand\color[2][]{%
    \errmessage{(Inkscape) Color is used for the text in Inkscape, but the package 'color.sty' is not loaded}%
    \renewcommand\color[2][]{}%
  }%
  \providecommand\transparent[1]{%
    \errmessage{(Inkscape) Transparency is used (non-zero) for the text in Inkscape, but the package 'transparent.sty' is not loaded}%
    \renewcommand\transparent[1]{}%
  }%
  \providecommand\rotatebox[2]{#2}%
  \newcommand*\fsize{\dimexpr\f@size pt\relax}%
  \newcommand*\lineheight[1]{\fontsize{\fsize}{#1\fsize}\selectfont}%
  \ifx\svgwidth\undefined%
    \setlength{\unitlength}{508.22174313bp}%
    \ifx\svgscale\undefined%
      \relax%
    \else%
      \setlength{\unitlength}{\unitlength * \real{\svgscale}}%
    \fi%
  \else%
    \setlength{\unitlength}{\svgwidth}%
  \fi%
  \global\let\svgwidth\undefined%
  \global\let\svgscale\undefined%
  \makeatother%
  \begin{picture}(1,0.52774439)%
    \lineheight{1}%
    \setlength\tabcolsep{0pt}%
    \put(0,0){\includegraphics[width=\unitlength,page=1]{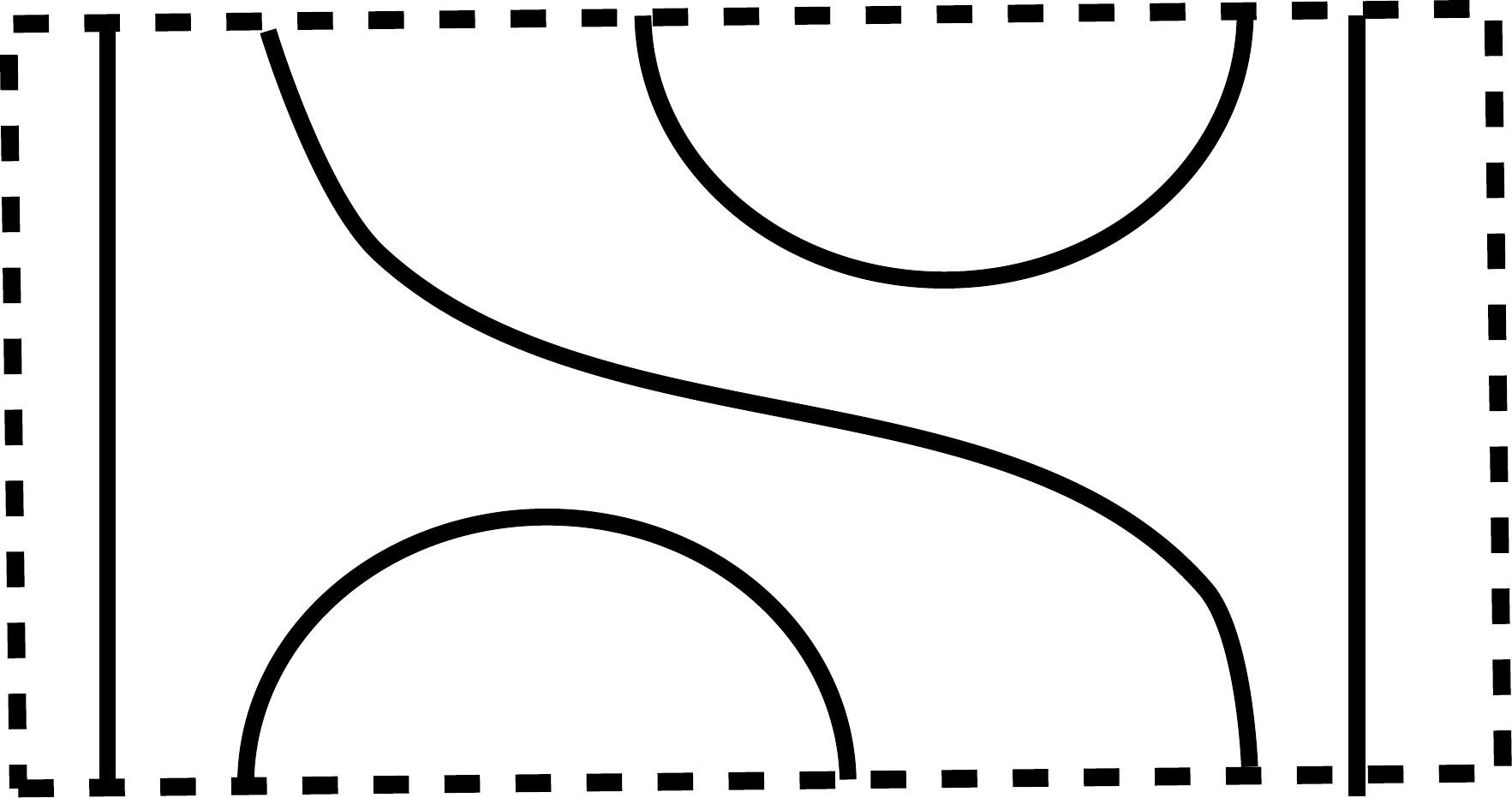}}%
    \put(0.08881332,0.22220897){\makebox(0,0)[lt]{\lineheight{1.25}\smash{\begin{tabular}[t]{l}$x$\end{tabular}}}}%
    \put(0.72042738,0.22220897){\makebox(0,0)[lt]{\lineheight{1.25}\smash{\begin{tabular}[t]{l}$z$\end{tabular}}}}%
    \put(0.66139806,0.39929701){\makebox(0,0)[lt]{\lineheight{1.25}\smash{\begin{tabular}[t]{l}$y$\end{tabular}}}}%
    \put(0.33673663,0.10415024){\makebox(0,0)[lt]{\lineheight{1.25}\smash{\begin{tabular}[t]{l}$y$\end{tabular}}}}%
    \put(0.90932157,0.25172365){\makebox(0,0)[lt]{\lineheight{1.25}\smash{\begin{tabular}[t]{l}$t$\end{tabular}}}}%
  \end{picture}%
\endgroup%

\end{figure} 
Then 
\[P(d) = \left[ \begin{array}{c} x+y \\ y \end{array}\right] \left[ \begin{array}{c} t+y \\ y \end{array}\right] [y]! [x+z+t+y]!.  \] 
\end{lem}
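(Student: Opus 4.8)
\emph{Overall strategy.} Write $N$ for the number of strands, so that $d\in TL_N$ with $N=x+2y+z+t$: the diagram consists of a bundle of $y$ strands that turns back, together with the remaining $x+z+t$ through-strands split into the groups $x$, $z$, $t$ positioned as in the figure. The key structural observation is that Lemmas~\ref{l.ccup} and~\ref{l.Khslide} only re-bundle through-strands around a \emph{fixed} system of turnback arcs: they change neither the number of arcs nor which strands turn back. Consequently they cannot by themselves reach the identity diagram (which for $y\geq 1$ has no turnbacks), but they can absorb the side groups $x$ and $t$ into the $y$-bundle at the cost of one quantum binomial each. The plan is therefore twofold: (i) use the two lemmas to reduce $d$ to a canonical ``fully bundled'' turnback diagram $d_0$, extracting the factors $\left[\begin{smallmatrix} x+y\\ y\end{smallmatrix}\right]$ and $\left[\begin{smallmatrix} t+y\\ y\end{smallmatrix}\right]$; and (ii) compute $P(d_0)=[y]!\,[x+z+t+y]!$ directly. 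Multiplying these gives the stated formula.

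\emph{Extracting the two binomials.} First I would put the left group of $x$ strands into the standard local configuration adjacent to the $y$-turnback, applying Lemma~\ref{l.Khslide} to slide them into position if necessary, and then apply Lemma~\ref{l.ccup} to the resulting cup. Read in the direction in which the more finely bundled diagram carries the extra factor, Lemma~\ref{l.ccup} yields $P(d)=\left[\begin{smallmatrix} x+y\\ y\end{smallmatrix}\right]P(d_1)$, where $d_1$ is obtained from $d$ by absorbing the $x$-strands into the turnback bundle (here I use $\left[\begin{smallmatrix} x+y\\ x\end{smallmatrix}\right]=\left[\begin{smallmatrix} x+y\\ y\end{smallmatrix}\right]$). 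The right group of $t$ strands is treated symmetrically, giving $P(d_1)=\left[\begin{smallmatrix} t+y\\ y\end{smallmatrix}\right]P(d_0)$. Since the two manipulations take place in disjoint regions of the diagram they are independent, and $d_0$ is the canonical diagram consisting of the $y$-bundle turning back alongside $x+z+t$ parallel through-strands. Thus $P(d)=\left[\begin{smallmatrix} x+y\\ y\end{smallmatrix}\right]\left[\begin{smallmatrix} t+y\\ y\end{smallmatrix}\right]P(d_0)$.

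\emph{The base coefficient, and the main obstacle.} It remains to show $P(d_0)=[y]!\,[x+z+t+y]!$, and this is the crux. As observed above, $d_0$ has the same turnback count as $d$, so it is \emph{not} reducible to the identity $1_N$ (for which $P(1_N)=[N]!$ follows at once from Definition~\ref{d.dex} and the fact that $\jwn_N=1_N+(\text{turnback terms})$); in fact one checks $\deg P(d_0)<\deg P(1_N)$ whenever $y\geq 1$, confirming that Lemmas~\ref{l.ccup}--\ref{l.Khslide} cannot supply this factor. I would instead compute $P(d_0)$ by induction on $y$ using the recursive structure of the Jones--Wenzl projector (Wenzl's recursion): removing one strand from the turnback bundle relates the coefficient in $TL_N$ to one in $TL_{N-1}$ and contributes a single quantum-integer factor, and careful bookkeeping of these factors makes the product telescope to $[y]!\,[x+z+t+y]!$. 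The two potential difficulties are (a) verifying that before each application of Lemma~\ref{l.ccup} the local picture genuinely matches that lemma's hypotheses, which may require several repositioning slides from Lemma~\ref{l.Khslide}, and (b) the base-case induction itself; of these, (b) is the real obstacle, since the re-bundling in the previous step is essentially bookkeeping.
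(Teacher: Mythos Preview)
The paper does not prove this lemma at all: it is quoted verbatim as \cite[Proposition~4.10]{Kho97} and stated without argument, exactly like Lemmas~\ref{l.ccup} and~\ref{l.Khslide}. So there is no ``paper's own proof'' to compare your proposal against.

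On the substance of your outline: the two-step plan (peel off the binomials with Lemmas~\ref{l.ccup}--\ref{l.Khslide}, then evaluate the residual coefficient) is exactly how Khovanov's original argument is organized, so you are on the right track. Two remarks. First, be careful about which lemma does which job: in Khovanov's proof the factors $\left[\begin{smallmatrix} x+y\\ y\end{smallmatrix}\right]$ and $\left[\begin{smallmatrix} t+y\\ y\end{smallmatrix}\right]$ come from two applications of the \emph{slide} move (your Lemma~\ref{l.Khslide}), sliding the $x$ through-strands under the $y$-cap and the $t$ through-strands over the $y$-cup, not from merging cups via Lemma~\ref{l.ccup}. Your narrative conflates the two, and as written it is not clear that the local picture you describe actually matches the hypotheses of Lemma~\ref{l.ccup}. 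Second, you correctly flag the base case $P(d_0)=[y]!\,[x+y+z+t]!$ as the real content and propose induction on $y$ via Wenzl's recursion; that works, but you have not carried it out, and the bookkeeping (tracking which terms of the recursion survive after composing with the turnback) is precisely where the $[y]!$ appears. As it stands your proposal is a correct high-level sketch with the central computation still to be done.
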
 

We will also take as given the following fusion and untwisting formulas, and the formula for removing circles from a projector  \cite{MV94}.
\begin{figure}[H]
\begin{center} 
\def \svgwidth{.7\columnwidth}
\begin{equation} \label{e.funtwist}
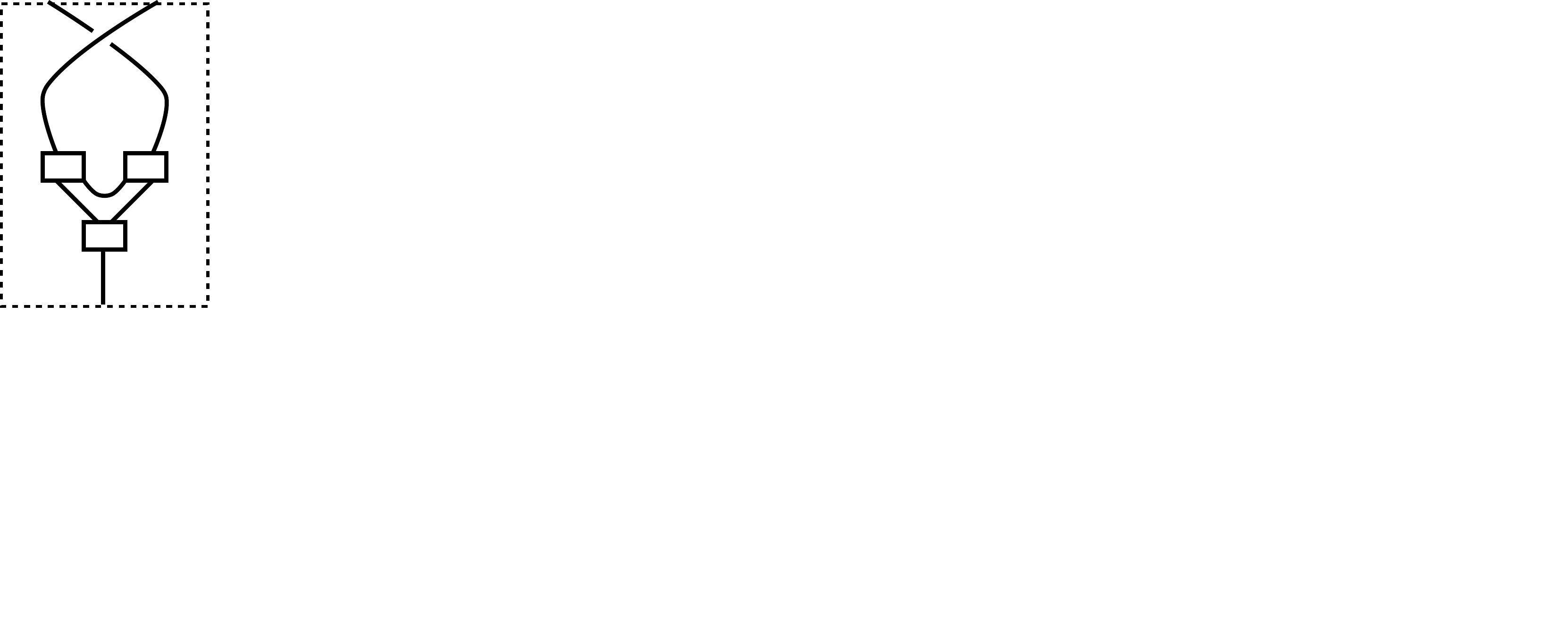
\end{equation} 
\end{center} 
\caption{Fusion and untwisting formulas}
\end{figure} 
\begin{figure}[H]
 \begin{center} 
\def \svgwidth{.35\columnwidth}
\begin{equation} \label{e.rcircle}
\begingroup%
  \makeatletter%
  \providecommand\color[2][]{%
    \errmessage{(Inkscape) Color is used for the text in Inkscape, but the package 'color.sty' is not loaded}%
    \renewcommand\color[2][]{}%
  }%
  \providecommand\transparent[1]{%
    \errmessage{(Inkscape) Transparency is used (non-zero) for the text in Inkscape, but the package 'transparent.sty' is not loaded}%
    \renewcommand\transparent[1]{}%
  }%
  \providecommand\rotatebox[2]{#2}%
  \newcommand*\fsize{\dimexpr\f@size pt\relax}%
  \newcommand*\lineheight[1]{\fontsize{\fsize}{#1\fsize}\selectfont}%
  \ifx\svgwidth\undefined%
    \setlength{\unitlength}{1309.22532112bp}%
    \ifx\svgscale\undefined%
      \relax%
    \else%
      \setlength{\unitlength}{\unitlength * \real{\svgscale}}%
    \fi%
  \else%
    \setlength{\unitlength}{\svgwidth}%
  \fi%
  \global\let\svgwidth\undefined%
  \global\let\svgscale\undefined%
  \makeatother%
  \begin{picture}(1,0.32080039)%
    \lineheight{1}%
    \setlength\tabcolsep{0pt}%
    \put(0,0){\includegraphics[width=\unitlength,page=1]{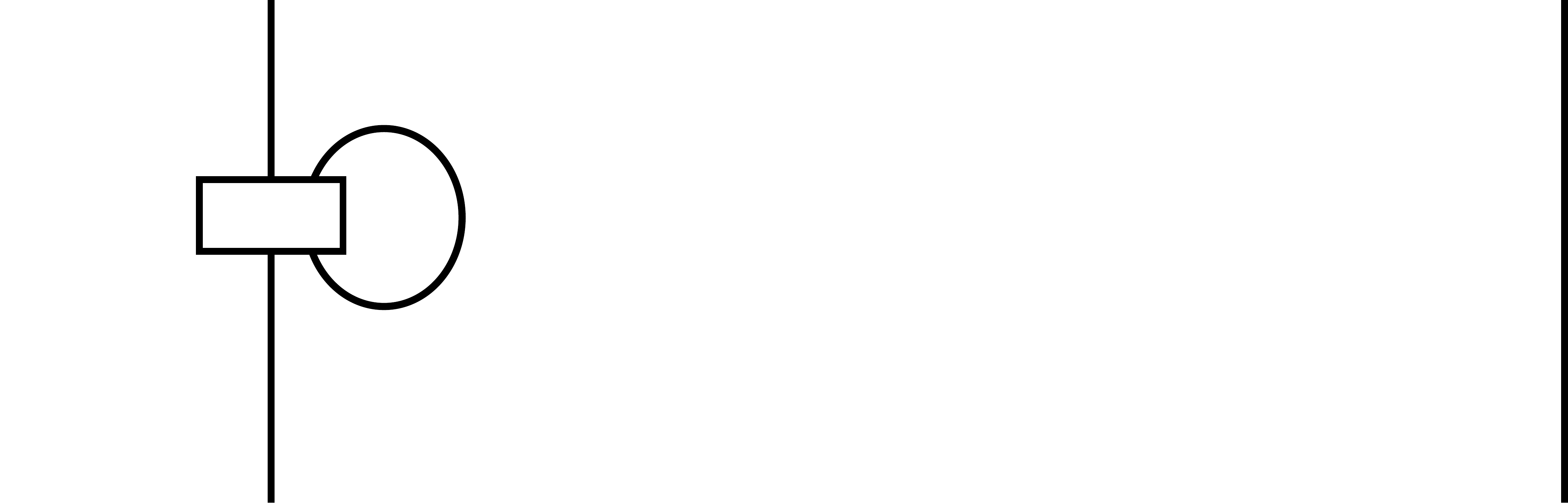}}%
    \put(-0.00115616,0.23077987){\makebox(0,0)[lt]{\lineheight{1.25}\smash{\begin{tabular}[t]{l}$n-c$\end{tabular}}}}%
    \put(0.24166251,0.25205745){\makebox(0,0)[lt]{\lineheight{1.25}\smash{\begin{tabular}[t]{l}$c$\end{tabular}}}}%
    \put(0.31040545,0.13748589){\makebox(0,0)[lt]{\lineheight{1.25}\smash{\begin{tabular}[t]{l}$=(-1)^c\frac{[n+2]}{[n+2-c]}$\end{tabular}}}}%
    \put(0.83268554,0.22914313){\makebox(0,0)[lt]{\lineheight{1.25}\smash{\begin{tabular}[t]{l}$n-c$\end{tabular}}}}%
  \end{picture}%
\endgroup%

\end{equation} 
\caption{The formula for removing circles from a Jones-Wenzl projector}
\end{center} 
\end{figure} 
Define 
\[U(w, k) = ((-1)^{n-k} q^{n-k+ \frac{n^2}{2}-k^2})^w. \] 
We have from \eqref{e.funtwist},  \\  
\begin{equation*}
\def \svgwidth{.9\columnwidth}
\begingroup%
  \makeatletter%
  \providecommand\color[2][]{%
    \errmessage{(Inkscape) Color is used for the text in Inkscape, but the package 'color.sty' is not loaded}%
    \renewcommand\color[2][]{}%
  }%
  \providecommand\transparent[1]{%
    \errmessage{(Inkscape) Transparency is used (non-zero) for the text in Inkscape, but the package 'transparent.sty' is not loaded}%
    \renewcommand\transparent[1]{}%
  }%
  \providecommand\rotatebox[2]{#2}%
  \newcommand*\fsize{\dimexpr\f@size pt\relax}%
  \newcommand*\lineheight[1]{\fontsize{\fsize}{#1\fsize}\selectfont}%
  \ifx\svgwidth\undefined%
    \setlength{\unitlength}{864.05792521bp}%
    \ifx\svgscale\undefined%
      \relax%
    \else%
      \setlength{\unitlength}{\unitlength * \real{\svgscale}}%
    \fi%
  \else%
    \setlength{\unitlength}{\svgwidth}%
  \fi%
  \global\let\svgwidth\undefined%
  \global\let\svgscale\undefined%
  \makeatother%
  \begin{picture}(1,0.1909843)%
    \lineheight{1}%
    \setlength\tabcolsep{0pt}%
    \put(0,0){\includegraphics[width=\unitlength,page=1]{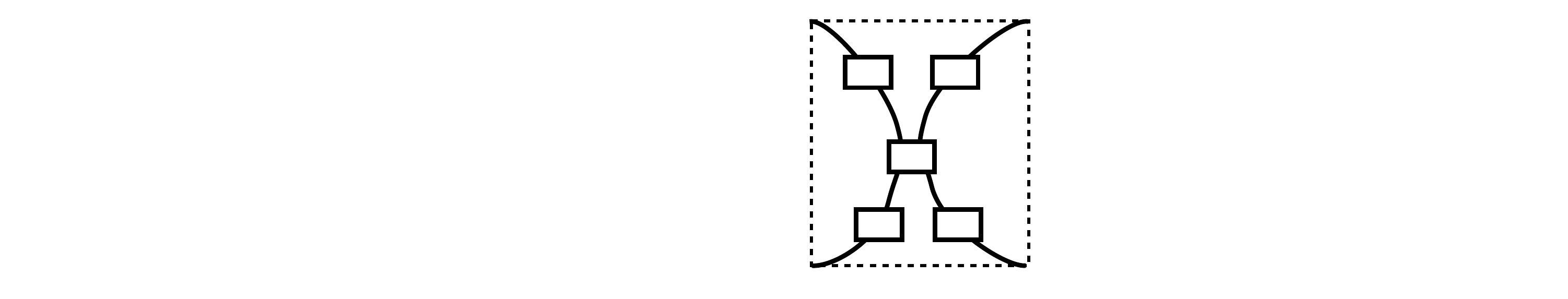}}%
    \put(0.54725863,0.15987769){\makebox(0,0)[lt]{\lineheight{1.25}\smash{\begin{tabular}[t]{l}$n$\end{tabular}}}}%
    \put(0.60281016,0.15987779){\makebox(0,0)[lt]{\lineheight{1.25}\smash{\begin{tabular}[t]{l}$n$\end{tabular}}}}%
    \put(0.52121906,0.02967809){\makebox(0,0)[lt]{\lineheight{1.25}\smash{\begin{tabular}[t]{l}$n$\end{tabular}}}}%
    \put(0.634058,0.03835807){\makebox(0,0)[lt]{\lineheight{1.25}\smash{\begin{tabular}[t]{l}$n$\end{tabular}}}}%
    \put(0.54725863,0.10779785){\makebox(0,0)[lt]{\lineheight{1.25}\smash{\begin{tabular}[t]{l}$k$\end{tabular}}}}%
    \put(0.59933812,0.10779785){\makebox(0,0)[lt]{\lineheight{1.25}\smash{\begin{tabular}[t]{l}$k$\end{tabular}}}}%
    \put(0.54725863,0.06787051){\makebox(0,0)[lt]{\lineheight{1.25}\smash{\begin{tabular}[t]{l}$k$\end{tabular}}}}%
    \put(0.59933812,0.06787051){\makebox(0,0)[lt]{\lineheight{1.25}\smash{\begin{tabular}[t]{l}$k$\end{tabular}}}}%
    \put(0.10325273,0.10057441){\makebox(0,0)[lt]{\lineheight{1.25}\smash{\begin{tabular}[t]{l}$=\sum_{\substack{k\leq n:(n, n, k) \\ \text{admissible}}}(-1)^{k} \frac{[k+1]}{\theta(n, n, k)}U(w, k)$\end{tabular}}}}%
    \put(0,0){\includegraphics[width=\unitlength,page=2]{untwist.pdf}}%
    \put(0.03024009,0.00478077){\makebox(0,0)[lt]{\lineheight{1.25}\smash{\begin{tabular}[t]{l}$|w|$ crossings\end{tabular}}}}%
    \put(0,0){\includegraphics[width=\unitlength,page=3]{untwist.pdf}}%
    \put(0.061488,0.11762044){\makebox(0,0)[lt]{\lineheight{1.25}\smash{\begin{tabular}[t]{l}.\end{tabular}}}}%
    \put(0.061488,0.10199647){\makebox(0,0)[lt]{\lineheight{1.25}\smash{\begin{tabular}[t]{l}.\end{tabular}}}}%
    \put(0.061488,0.11067645){\makebox(0,0)[lt]{\lineheight{1.25}\smash{\begin{tabular}[t]{l}.\end{tabular}}}}%
    \put(-0.00100782,0.17317226){\makebox(0,0)[lt]{\lineheight{1.25}\smash{\begin{tabular}[t]{l}$n$\end{tabular}}}}%
    \put(0.10662385,0.17317226){\makebox(0,0)[lt]{\lineheight{1.25}\smash{\begin{tabular}[t]{l}$n$\end{tabular}}}}%
    \put(0.10823584,0.02437271){\makebox(0,0)[lt]{\lineheight{1.25}\smash{\begin{tabular}[t]{l}$n$\end{tabular}}}}%
    \put(-0.00175182,0.02065272){\makebox(0,0)[lt]{\lineheight{1.25}\smash{\begin{tabular}[t]{l}$n$\end{tabular}}}}%
  \end{picture}%
\endgroup%

\end{equation*} 
Also define 
\[ R(c):= (-1)^c \frac{[n+2]}{[n+2-c]}. \]  
This is the coefficient multiplying $n-c$ strands after removing $c$ circles using \eqref{e.rcircle}.

A triple of even integers $(a, b, c)$ is called admissible if $a\leq b+c$, $b\leq a+c$ and $c\leq a+b$. For an admissible triple of integers $(a, b, c)$, the function $\theta(a, b, c)$ is $\langle \Theta (a, b, c)  \rangle$ of the skein element $\Theta(a, b, c)$ as shown below in Figure \ref{f.theta}: 
\[\theta(a, b, c) = \frac{[x+y+z+1]![x]![y]![z]!}{[y+z]![z+x]![x+y]!},  \]
where $x = \frac{a+b-c}{2}$, $y = \frac{b+c-a}{2}$, and $z = \frac{a+c-b}{2}$.  
\begin{figure}[H]
\def \svgwidth{.3\columnwidth}
\begingroup%
  \makeatletter%
  \providecommand\color[2][]{%
    \errmessage{(Inkscape) Color is used for the text in Inkscape, but the package 'color.sty' is not loaded}%
    \renewcommand\color[2][]{}%
  }%
  \providecommand\transparent[1]{%
    \errmessage{(Inkscape) Transparency is used (non-zero) for the text in Inkscape, but the package 'transparent.sty' is not loaded}%
    \renewcommand\transparent[1]{}%
  }%
  \providecommand\rotatebox[2]{#2}%
  \newcommand*\fsize{\dimexpr\f@size pt\relax}%
  \newcommand*\lineheight[1]{\fontsize{\fsize}{#1\fsize}\selectfont}%
  \ifx\svgwidth\undefined%
    \setlength{\unitlength}{347.50484677bp}%
    \ifx\svgscale\undefined%
      \relax%
    \else%
      \setlength{\unitlength}{\unitlength * \real{\svgscale}}%
    \fi%
  \else%
    \setlength{\unitlength}{\svgwidth}%
  \fi%
  \global\let\svgwidth\undefined%
  \global\let\svgscale\undefined%
  \makeatother%
  \begin{picture}(1,0.68457405)%
    \lineheight{1}%
    \setlength\tabcolsep{0pt}%
    \put(0,0){\includegraphics[width=\unitlength,page=1]{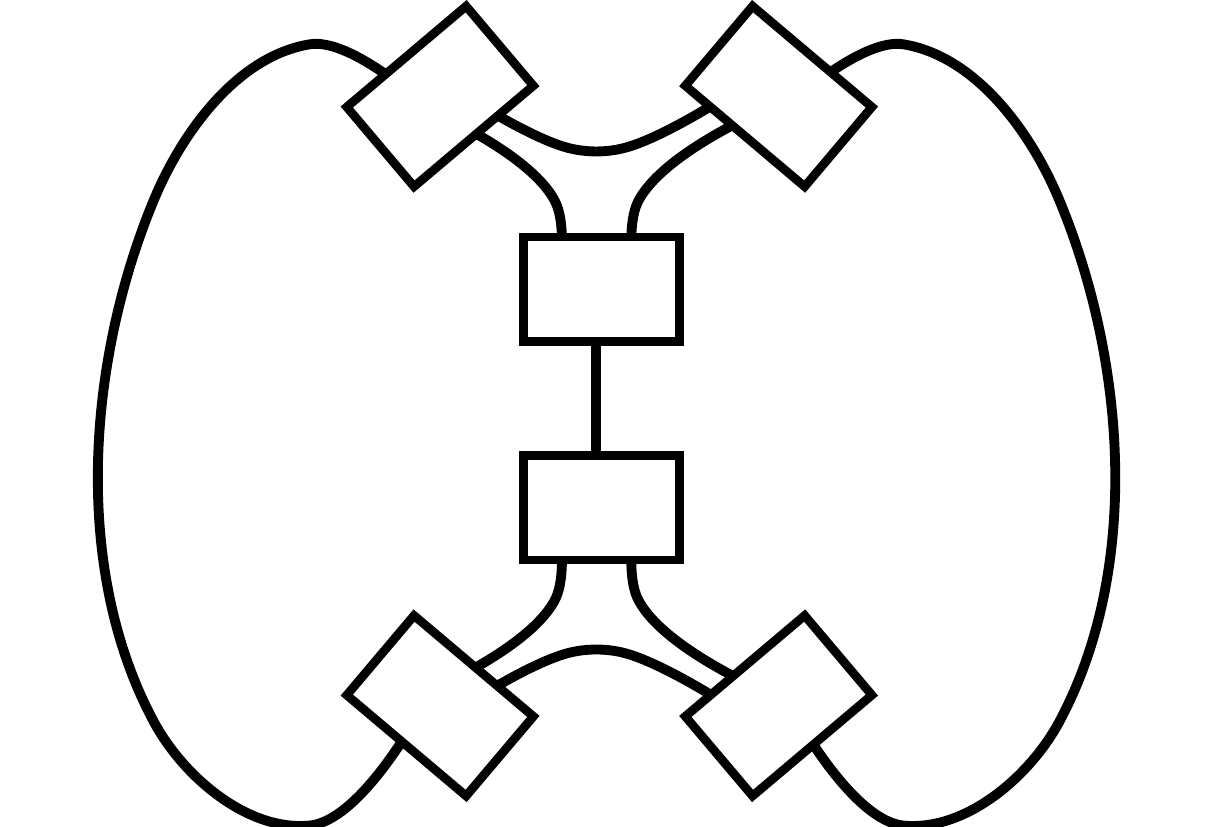}}%
    \put(-0.00217791,0.30254649){\makebox(0,0)[lt]{\lineheight{1.25}\smash{\begin{tabular}[t]{l}$a$\end{tabular}}}}%
    \put(0.42947075,0.34571133){\makebox(0,0)[lt]{\lineheight{1.25}\smash{\begin{tabular}[t]{l}$c$\end{tabular}}}}%
    \put(0.94744914,0.43204108){\makebox(0,0)[lt]{\lineheight{1.25}\smash{\begin{tabular}[t]{l}$b$\end{tabular}}}}%
    \put(0.4726356,0.60470052){\makebox(0,0)[lt]{\lineheight{1.25}\smash{\begin{tabular}[t]{l}$z$\end{tabular}}}}%
    \put(0.38630588,0.49678838){\makebox(0,0)[lt]{\lineheight{1.25}\smash{\begin{tabular}[t]{l}$y$\end{tabular}}}}%
    \put(0.5805478,0.49678838){\makebox(0,0)[lt]{\lineheight{1.25}\smash{\begin{tabular}[t]{l}$x$\end{tabular}}}}%
    \put(0.5805478,0.17305195){\makebox(0,0)[lt]{\lineheight{1.25}\smash{\begin{tabular}[t]{l}$x$\end{tabular}}}}%
    \put(0.37540512,0.17741225){\makebox(0,0)[lt]{\lineheight{1.25}\smash{\begin{tabular}[t]{l}$y$\end{tabular}}}}%
    \put(0.47459665,0.09566193){\makebox(0,0)[lt]{\lineheight{1.25}\smash{\begin{tabular}[t]{l}$z$\end{tabular}}}}%
  \end{picture}%
\endgroup%

\caption{\label{f.theta} The skein element $\Theta(a, b, c)$}
\end{figure} 
For a rational function $\mathcal{L}(q)$ in $q$  complex coefficients, the degree, $\deg(\mathcal{L})$ is the maximum power in $q$ of the Laurent series expansion of $\mathcal{L}(q)$ with finitely many terms of non-negative powers. 
Note
\[\qquad \deg [c] = c-1, \qquad  \text{and} \qquad \deg \theta(a, b, c) = \frac{a+b+c}{2}.\]  

\section{Choosing skein elements in the expansion of a Jones-Wenzl projector} \label{s.org}
We collect useful lemmas classifying the possible choices of skein elements in the expansion of Jones-Wenzl projectors decorating certain skein elements. We will use these to justify our state sum of the colored Jones polynomial of pretzel links in Theorem \ref{t.mainintro}.  As assumed throughout the paper, the symbols labeling strands of tangle diagrams are non-negative integers, and they denote the number of parallel strands. We will use dashed bounding boxes to indicate the Jones-Wenzl projector we expand in each lemma. 
\begin{lem} \label{l.mjw}
Suppose that we have a skein element as in Figure \ref{f.jwc}. 
\begin{figure}[H]
\def\svgwidth{.2\columnwidth}
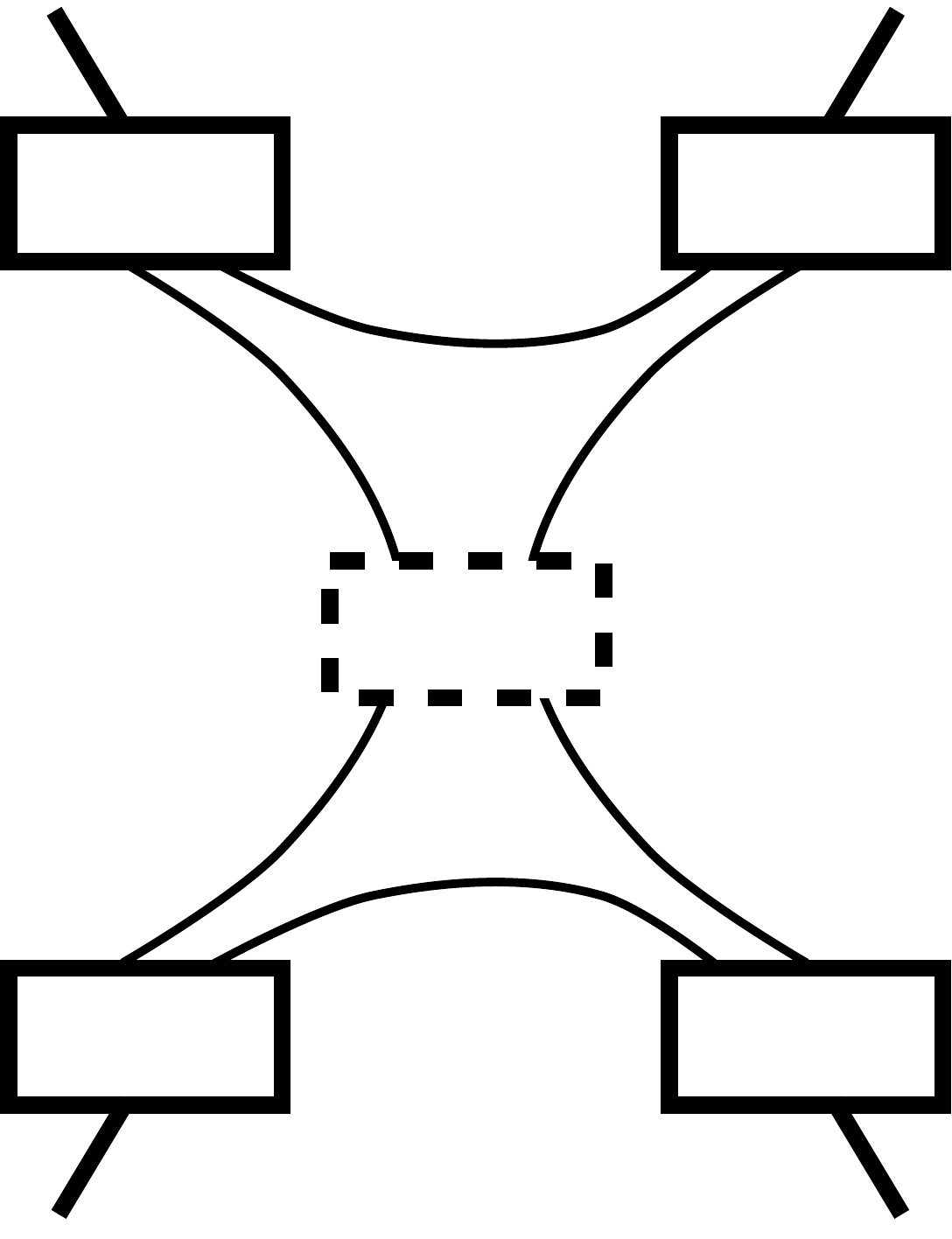
\caption{\label{f.jwc} The center Jones-Wenzl projector is shown with a dashed border. }
\end{figure} 
Then a skein element $d$  in the expansion of the center Jones-Wenzl projector (shown with a dashed bounding box) that does not result in a cap or a cup composed with one of the four framing Jones-Wenzl projectors is of the following form with $\tilde{k} \leq k$ :
\begin{figure}[H]
\def\svgwidth{.3\columnwidth}
\begingroup%
  \makeatletter%
  \providecommand\color[2][]{%
    \errmessage{(Inkscape) Color is used for the text in Inkscape, but the package 'color.sty' is not loaded}%
    \renewcommand\color[2][]{}%
  }%
  \providecommand\transparent[1]{%
    \errmessage{(Inkscape) Transparency is used (non-zero) for the text in Inkscape, but the package 'transparent.sty' is not loaded}%
    \renewcommand\transparent[1]{}%
  }%
  \providecommand\rotatebox[2]{#2}%
  \newcommand*\fsize{\dimexpr\f@size pt\relax}%
  \newcommand*\lineheight[1]{\fontsize{\fsize}{#1\fsize}\selectfont}%
  \ifx\svgwidth\undefined%
    \setlength{\unitlength}{481.94392737bp}%
    \ifx\svgscale\undefined%
      \relax%
    \else%
      \setlength{\unitlength}{\unitlength * \real{\svgscale}}%
    \fi%
  \else%
    \setlength{\unitlength}{\svgwidth}%
  \fi%
  \global\let\svgwidth\undefined%
  \global\let\svgscale\undefined%
  \makeatother%
  \begin{picture}(1,0.43193675)%
    \lineheight{1}%
    \setlength\tabcolsep{0pt}%
    \put(0,0){\includegraphics[width=\unitlength,page=1]{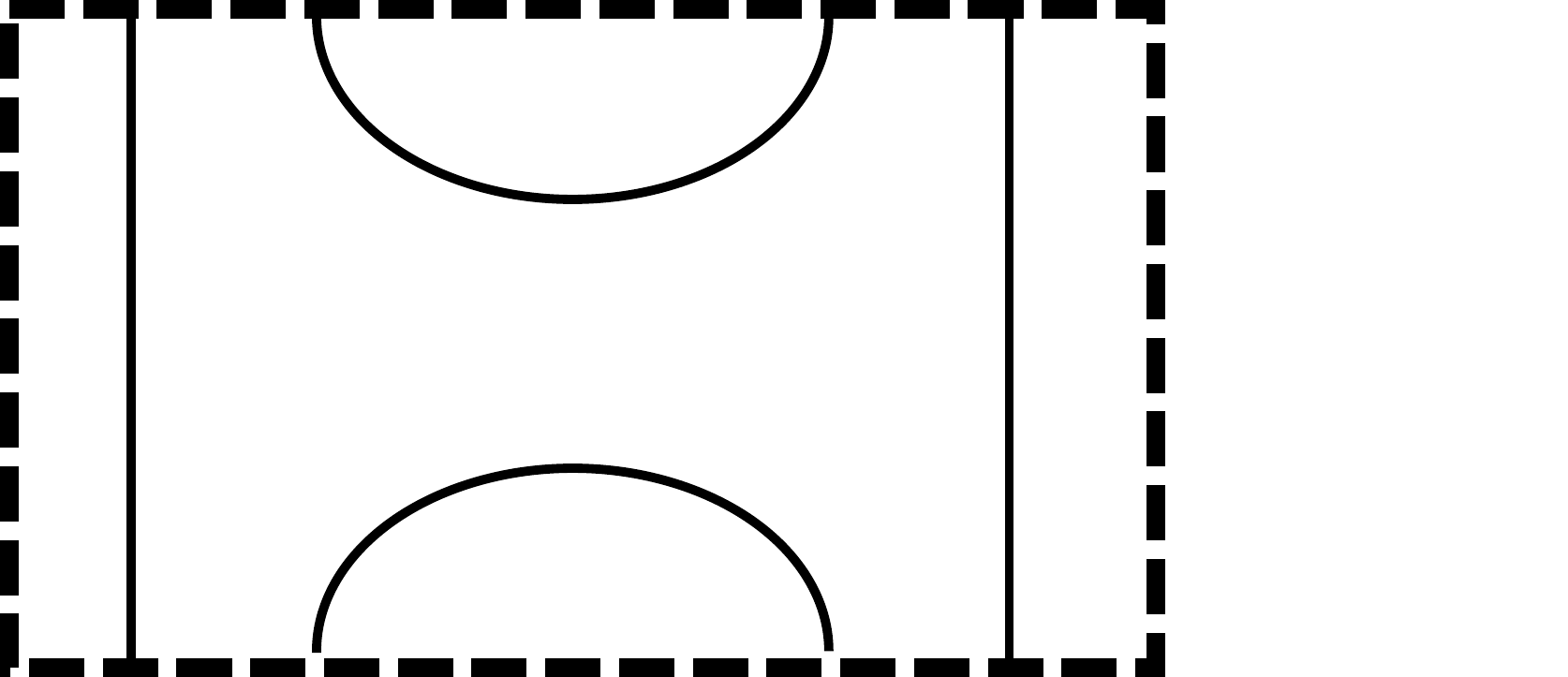}}%
    \put(0.26447916,0.3393854){\makebox(0,0)[lt]{\lineheight{1.25}\smash{\begin{tabular}[t]{l}$k-\tilde{k}$\end{tabular}}}}%
    \put(0.25380809,0.03229146){\makebox(0,0)[lt]{\lineheight{1.25}\smash{\begin{tabular}[t]{l}$k-\tilde{k}$\end{tabular}}}}%
    \put(0.10547835,0.19262541){\makebox(0,0)[lt]{\lineheight{1.25}\smash{\begin{tabular}[t]{l}$\tilde{k}$\end{tabular}}}}%
    \put(0.57544983,0.19262541){\makebox(0,0)[lt]{\lineheight{1.25}\smash{\begin{tabular}[t]{l}$\tilde{k}$\end{tabular}}}}%
  \end{picture}%
\endgroup%

\caption{Possibilities for the skein element $d$ in the expansion of the center Jones-Wenzl projector that does not result in a zero skein element.}
\end{figure} 
The choice of the skein element $d$ has coefficient
\[P(d) = \left[\begin{array}{c} k \\k-\tilde{k} \end{array}\right]^2  [k-\tilde{k}]![k+\tilde{k}]! \] in the expansion of the Jones-Wenzl projector. 
\begin{proof} Due to the four projectors framing the center projector, none of the four endpoints on the dashed bounding box, each with $k$ strands coming out, can be connected to itself through a strand from a choice of skein element in the expansion. Otherwise, the resulting skein element would have a cap or a cup composed with one of the framing projectors. Nor can any of the four endpoints be connected to another across the diagonal, because it would result in a cap or a cup composed with the pair of projectors of the opposite diagonal. 
\end{proof} 

\end{lem} 
\begin{defn}
Given a crossingless diagram in $TL_n$ without Jones-Wenzl projectors, a \emph{through strand} is a strand of the diagram with one end in one of the top $n$ points of the boundary of the disk $D^2$ defining $TL_n$,  and the other end in one of the bottom $n$ points. 
\end{defn}
Let $T\in TL_n$ be a skein element decorated by Jones-Wenzl projectors. We will denote by $\overline{T}$ the skein element obtained from $T$ by replacing all its Jones-Wenzl projectors by the identity.  
\begin{lem} \label{l.fmjw}
Suppose we have a skein element $T$ of the following form as in Figure \ref{f.cjw}.
\begin{figure}[H]
\def \svgwidth{.3\columnwidth}
\begingroup%
  \makeatletter%
  \providecommand\color[2][]{%
    \errmessage{(Inkscape) Color is used for the text in Inkscape, but the package 'color.sty' is not loaded}%
    \renewcommand\color[2][]{}%
  }%
  \providecommand\transparent[1]{%
    \errmessage{(Inkscape) Transparency is used (non-zero) for the text in Inkscape, but the package 'transparent.sty' is not loaded}%
    \renewcommand\transparent[1]{}%
  }%
  \providecommand\rotatebox[2]{#2}%
  \newcommand*\fsize{\dimexpr\f@size pt\relax}%
  \newcommand*\lineheight[1]{\fontsize{\fsize}{#1\fsize}\selectfont}%
  \ifx\svgwidth\undefined%
    \setlength{\unitlength}{874.47152935bp}%
    \ifx\svgscale\undefined%
      \relax%
    \else%
      \setlength{\unitlength}{\unitlength * \real{\svgscale}}%
    \fi%
  \else%
    \setlength{\unitlength}{\svgwidth}%
  \fi%
  \global\let\svgwidth\undefined%
  \global\let\svgscale\undefined%
  \makeatother%
  \begin{picture}(1,0.49431367)%
    \lineheight{1}%
    \setlength\tabcolsep{0pt}%
    \put(0,0){\includegraphics[width=\unitlength,page=1]{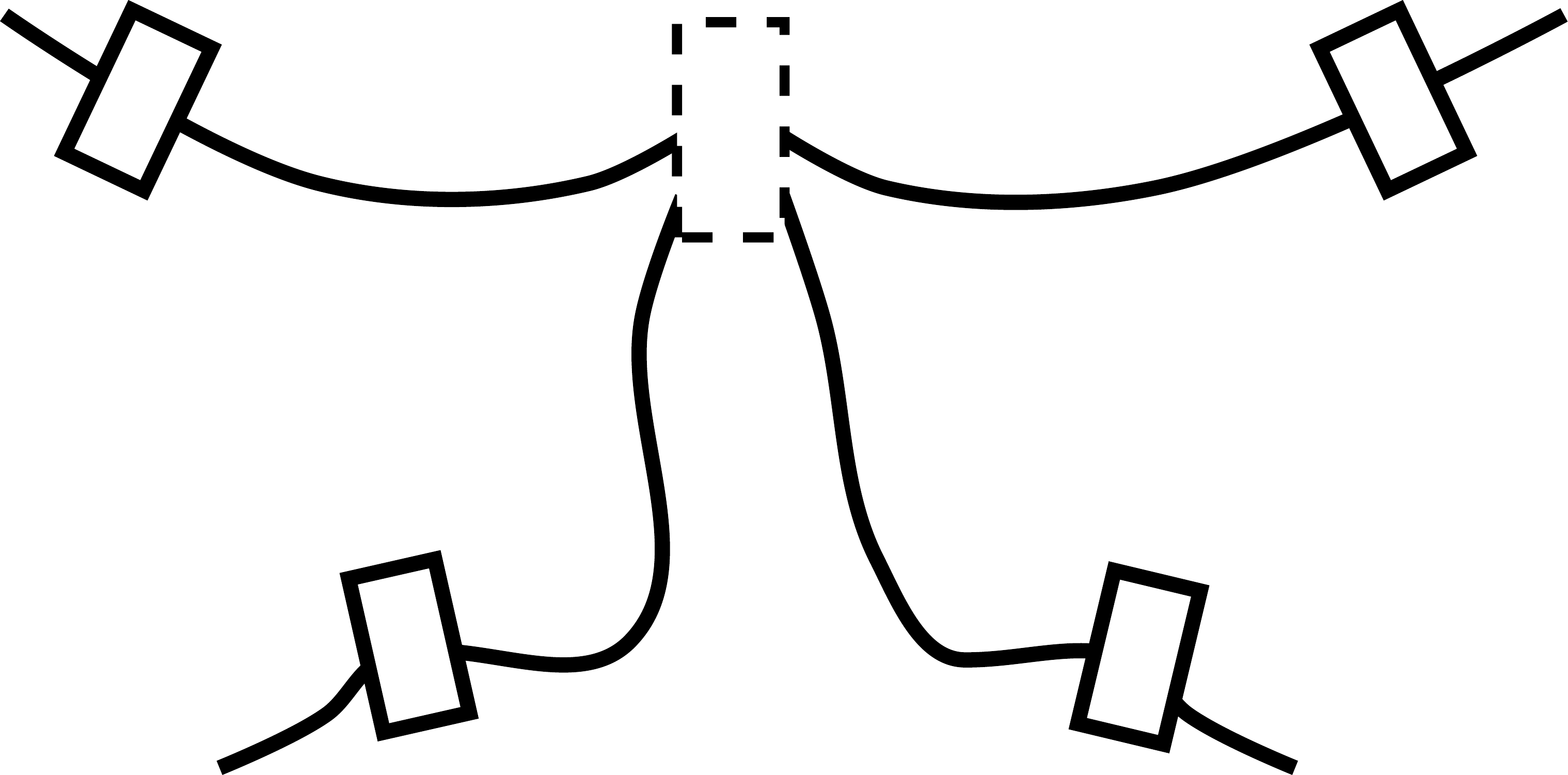}}%
    \put(0.15364456,0.42423311){\makebox(0,0)[lt]{\lineheight{1.25}\smash{\begin{tabular}[t]{l}$n-k_1$\end{tabular}}}}%
    \put(0.58762093,0.42594843){\makebox(0,0)[lt]{\lineheight{1.25}\smash{\begin{tabular}[t]{l}$n-k_2$\end{tabular}}}}%
    \put(0.31709023,0.21349358){\makebox(0,0)[lt]{\lineheight{1.25}\smash{\begin{tabular}[t]{l}$k_1$\end{tabular}}}}%
    \put(0.56850735,0.21814947){\makebox(0,0)[lt]{\lineheight{1.25}\smash{\begin{tabular}[t]{l}$k_2$\end{tabular}}}}%
  \end{picture}%
\endgroup%

\caption{\label{f.cjw} The center Jones-Wenzl projector is indicated with a dashed rectangular box.}
\end{figure} 

Denote by $T_{\sigma}$ the skein element obtained from $T$ by replacing the center Jones-Wenzl projector  (shown with a dashed bounding box) by a choice of a skein element $\sigma$ in its expansion. 
The choices of skein elements $\sigma = \sigma_t^1, \sigma_t^2$, or $\sigma_t^3$ which do not result in a cap or a cup composed with one of the four framing projectors have the following form.
\begin{figure}[H]
\def \svgwidth{.8\columnwidth}
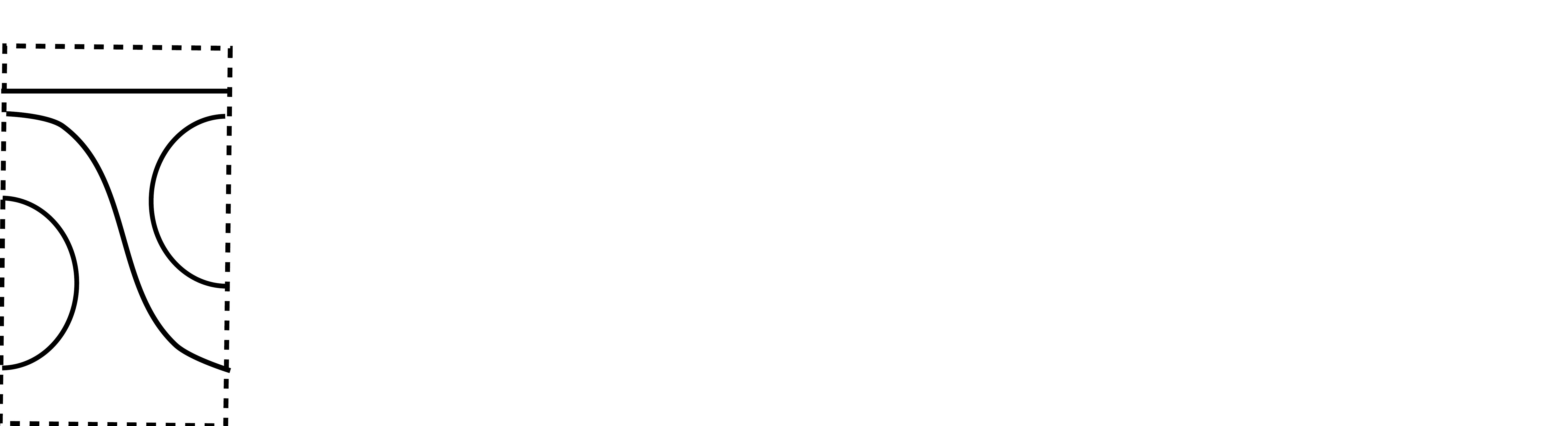
\caption{\label{f.glueskein-proj2} Three choices of skein elements}

\end{figure}
The coefficients of the choices of skein elements $\sigma_t^1, \sigma_t^2$, and $\sigma_t^3$ are respectively 
\[ P(\sigma_t^1) =  \left[ \begin{array}{c} b+\tilde{\ell}_1 \\ \tilde{\ell}_1
\end{array}  \right]  \left[ \begin{array}{c} t+\tilde{\ell}_1 \\ \tilde{\ell}_1
\end{array}  \right] [\tilde{\ell}_1]! [b+\tilde{\ell}_2+ t]! \qquad  \]  
\[ P(\sigma_t^2) =  \left[ \begin{array}{c} b+\tilde{\ell}_2 \\ \tilde{\ell}_2
\end{array}  \right]  \left[ \begin{array}{c} t+\tilde{\ell_2} \\ \tilde{\ell}_2
\end{array}  \right] [\tilde{\ell}_2]! [b+\tilde{\ell}_1+ t]! \qquad  \]  
\[ P(\sigma_t^3) =  \left[ \begin{array}{c} b+\tilde{\ell}_1 \\ \tilde{\ell}_1
\end{array}  \right]  \left[ \begin{array}{c} t+\tilde{\ell}_1 \\ \tilde{\ell}_1
\end{array}  \right] [\tilde{\ell}_1]! [b+\tilde{\ell}_1+t]! \qquad  \]  

Moreover,  if $k_1 + k_2 \leq n$, then the maximum possible number of through strands of $\overline{T_{\sigma}}$ is $k_1 + k_2$, and there are three possible choices of skein elements in the expansions of the center Jones-Wenzl projector which achieve this: $\sigma = \overline{\sigma}_t^1, \overline{\sigma}_t^2$, or $\overline{\sigma}_t^3$  according to whether $k_1 < k_2$, $k_1 > k_2$, or $k_1 = k_2$. 
\begin{figure}[H]
\def \svgwidth{.8\columnwidth}
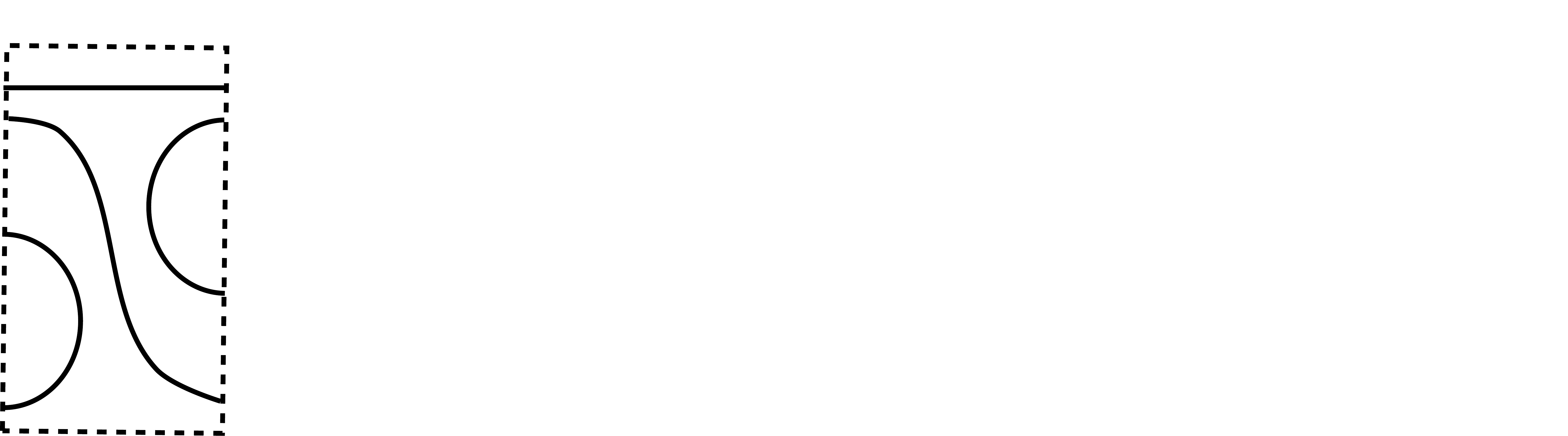
\caption{Choices of skein elements maximizing through strands when $k_1+k_2\leq n$ }
\end{figure}
The coefficients of the choices of skein elements are: $ P(\overline{\sigma}_t^1) = P(\overline{\sigma}_t^2) =  \frac{[n-k_1]![n-k_2]!}{[n-(k_1+k_2)]!}$, and $ P(\overline{\sigma}_t^3) = \frac{([n-k_1]!)^2}{[n-2k_1]!}$. 
\end{lem} 
\begin{proof} The first statement follows directly from examining the possibilities of connecting the endpoints on the dashed box of the center Jones-Wenzl projector. We enumerate the choices of a skein element in its expansion that would not result in a cap or a cup composed with a projector.  The proof of the second statement in the case $k_1+k_2\leq n$ is similar. The formulas for the coefficients are obtained by definition.

\end{proof}

\begin{lem} \label{l.induct} Suppose we have a skein element in $TL_n$ of the following form as shown in Figure \ref{f.idex}, where a skein element $\sigma_t$ is previously chosen in the expansion of the Jones-Wenzl projector in the top dashed box. Further suppose that $\sigma_t$ is one of the choices $\sigma_t^1, \sigma_t^2$, and $\sigma_t^3$ in Lemma \ref{l.fmjw}. 
\begin{figure}[H]
\def \svgwidth{.4\columnwidth} 
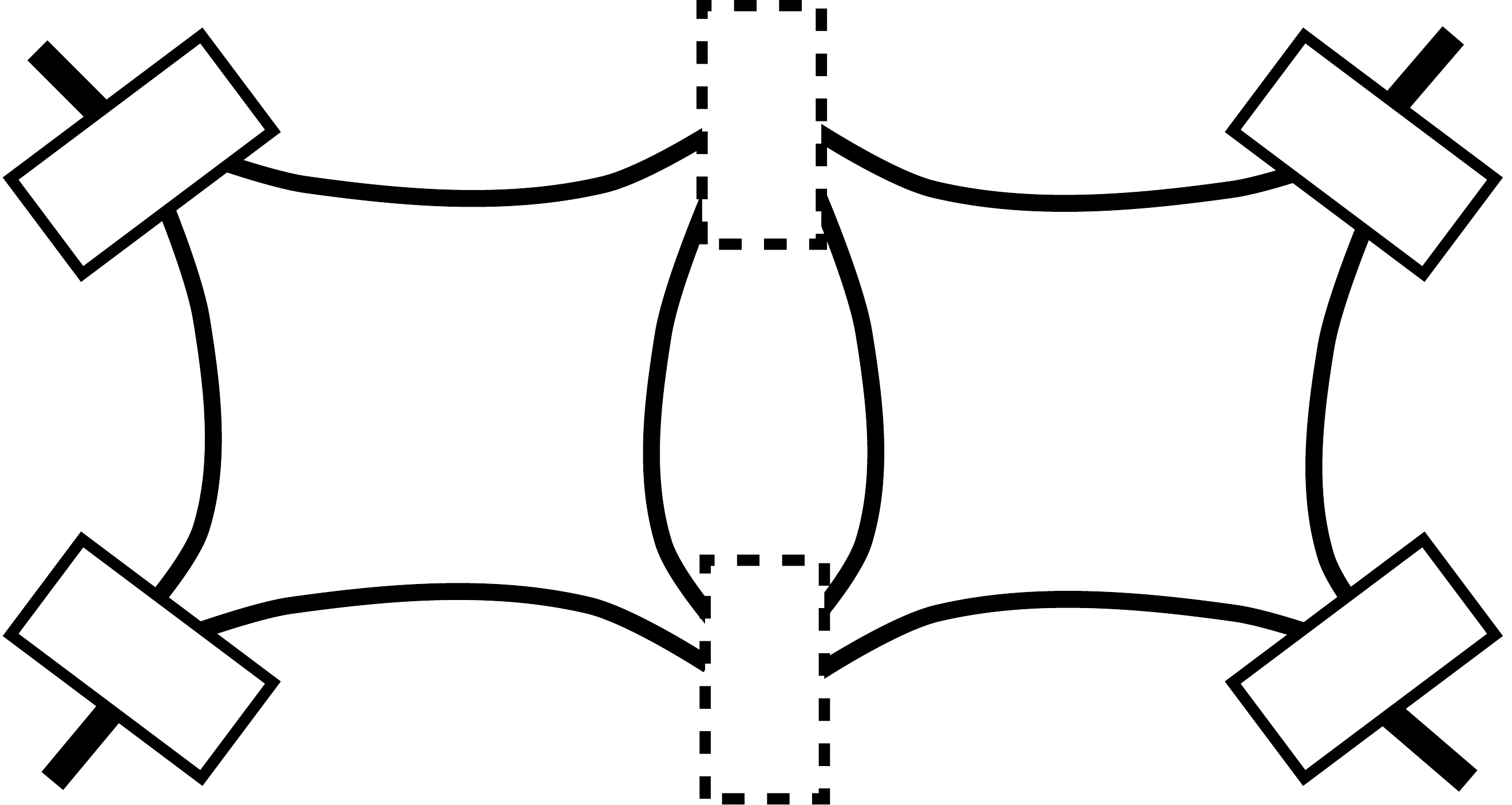
\caption{\label{f.idex} The middle pair of projectors are shown with dashed boxes.}
\end{figure}
\end{lem}
Choosing $\sigma_t$ results in a skein element which may have $c>0$ circles attached to the bottom projector. After removing $c$ circles from the bottom projector via \eqref{e.rcircle}, there are three choices, $\sigma = \sigma_b^1, \sigma_b^2$, and $\sigma_b^3$, up to mirror images via a reflection across the vertical axis, of a skein element in the expansion for the bottom Jones-Wenzl projector in $TL_{n-c}$ (from the empty dashed box in Figure \ref{f.idex}) that does not result in a cap or a cup composed with the four framing Jones-Wenzl projectors.
\begin{figure}[H]
\def \svgwidth{.6\textwidth}
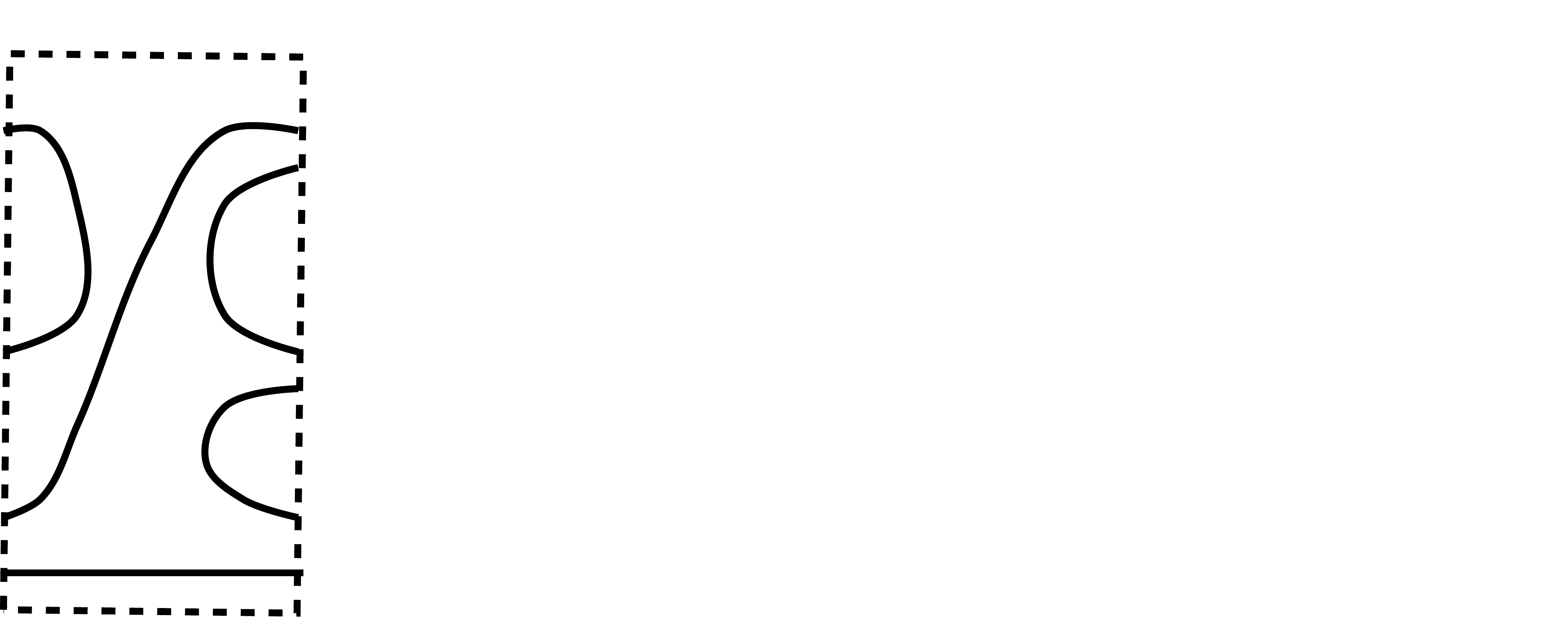
\caption{\label{f.case} All the possibilities for a skein element in the expansion of the bottom projector after choosing a skein element in the expansion for the top projector. Note we must have $r_1 + r_2 = \ell$ for both $\sigma_b^1$ and $\sigma_b^2$.}
\end{figure} 
The coefficient of each choice of skein element $\sigma_b^i$ for $1\leq i \leq 3$ and their mirror images $\breve{\sigma}_b^i$ are: 
\begin{align*} 
P(\sigma_b^1) = P(\breve{\sigma}_b^1) & =  \left[\begin{array}{c} r_1+r_2 \\ r_1 \end{array} \right]  \left[\begin{array}{c} b+\ell \\ \ell \end{array} \right]  [\ell]![b+s+\ell]!, \\ 
 P(\sigma_b^2) = P(\breve{\sigma}_b^2) & =  \left[\begin{array}{c} s+r_1 \\ r_1 \end{array} \right]^{-1}  \left[\begin{array}{c} r_1+r_2 \\ r_1 \end{array} \right]  \left[\begin{array}{c} b+\ell \\ \ell \end{array} \right]  \left[\begin{array}{c} s+\ell \\ \ell \end{array} \right][\ell]![b+s+\ell]!, \text{ and} \\
P(\sigma_b^3) = P(\breve{\sigma}_b^3)&=\left[\begin{array}{c} t+\ell \\ \ell \end{array} \right] \left[\begin{array}{c} b+\ell \\ \ell \end{array} \right] [\ell]![b+t+\ell]!. 
\end{align*} 
\begin{proof}
Given a choice of skein element $\sigma_t=\sigma_t^1$ or $\sigma_t^2$ from Figure \ref{f.glueskein-proj2} for the top center Jones-Wenzl projector, we have the following picture, shown in the middle of Figure \ref{f.rcc}, of the strands coming out of the remaining bottom projector, up to reflection across the vertical axis. 
\begin{figure}[H]
\def \svgwidth{.9\columnwidth}
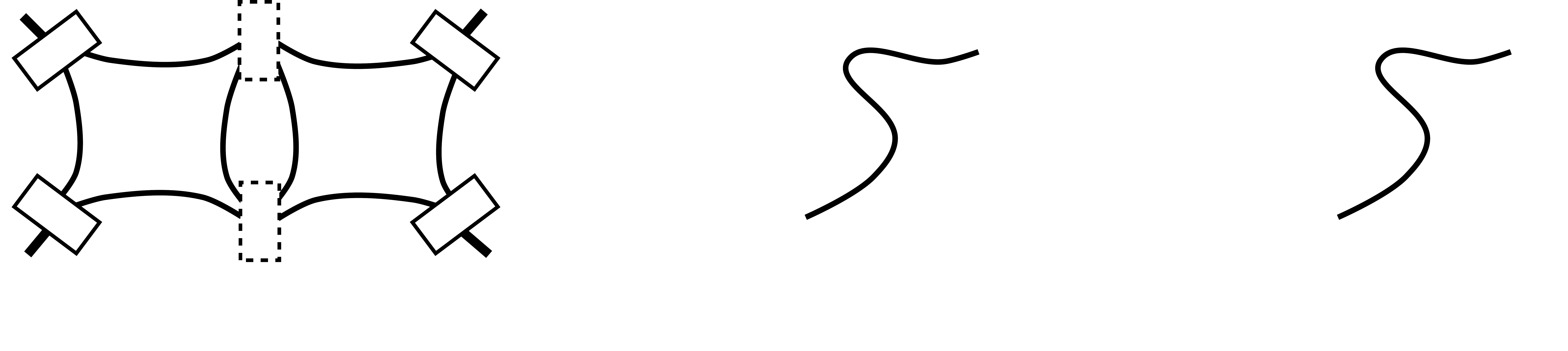
\caption{\label{f.rcc} We remove the circles shown in blue via Equation \eqref{e.rcircle}. } 
\end{figure} 
After removing $c$ circles connected to the bottom projector as the result of choosing $\sigma_t$, we have the picture on the right of Figure \ref{f.rcc}. In order for a choice of a skein element in the expansion of the bottom projector not to result in a cup or a cap composed with one of the four projectors,  we need to connect the strands coming out of an endpoint on the box to another. Examination will show that the only possible cases are $\sigma_b^1$ or $\sigma_b^2$ as shown in Figure \ref{f.case}, up to mirror images via a reflection across the vertical axis. For the choice of $\sigma_t=\sigma_t^3$ from Figure \ref{f.glueskein-proj2}, it is easy to see that the choice of a skein element $\sigma$ for the bottom projector has to be $\sigma_b^3$ as shown in Figure \ref{f.case} using similar arguments. The coefficients are computed for $\sigma_b^1$, $\sigma_b^2$ by applying Lemmas \ref{l.ccup} and \ref{l.Khslide} . 
\end{proof}

\section{The degree of the colored Jones polynomial of pretzel knots} \label{s.degp}
Consider the skein element $T$ comprised of two skein elements $T_1, T_2$ decorated with Jones-Wenzl projectors joined side by side as in the first picture of Figure \ref{f.dt}.  
\begin{figure}[H]
\def \svgwidth{\columnwidth}
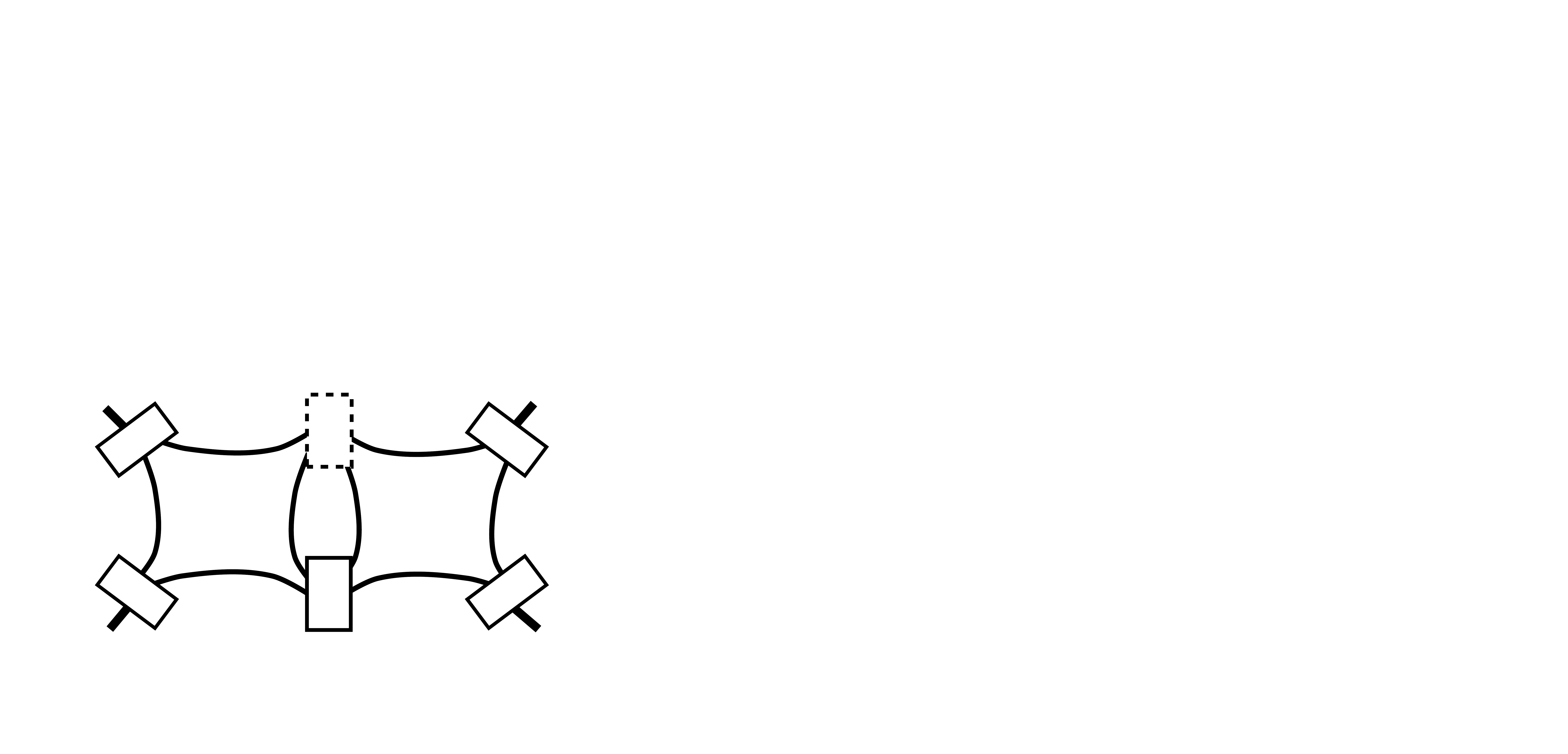
\caption{\label{f.dt} The sequence of choices of expansions for an adjacent set of skein elements}
\end{figure}  We define a sequence $\sigma = (d_1, d_2, \sigma^t, \sigma^b)$ of choices of skein elements for certain Jones-Wenzl projectors decorating $T$. First choose skein elements $d_1, d_2$ in the expansion of the center Jones-Wenzl projectors (marked with dashed boxes)  as in Lemma \ref{l.mjw}. Then, using Lemma \ref{l.fmjw}, pick the skein element $\sigma_t$ in the expansion of the top projector shown with a dashed box. After $\sigma_t$ is chosen, remove any circles attached to the bottom projector via \eqref{e.rcircle}, then use Lemma \ref{l.induct} to pick the skein element $\sigma_b$ in the expansion of the bottom projector shown with a dashed box. Let $k = (k_1, k_2) \in \mathbb{Z}^2_{\geq 0}$. Define $F_{k, \sigma}(q)$ to be the product of rational functions in $q$ resulting from this sequence of moves, replacing projectors by skein elements in the expansion, and removing circles by Equation \eqref{e.rcircle}. 
\begin{equation} F_{k, \sigma}(q) := \P(\sigma_t)\P(\sigma_b)R(c) \prod_{i=1, 2} \P(d_i), \end{equation} 
where $c$ is the number of removed circles, and recall $R(c) = (-1)^c \frac{[n+2]}{[n+2-c]}$. 
Define $T_{k, \sigma}$ to be the skein element resulting from this sequence of moves applied to $T$.
\[T= \sum_{\substack{k, \sigma: \ 0\leq k_i \leq n, \\ n, \ n, \ k_i \text{ admissible}}} F_{k, \sigma}(q) T_{k,\sigma}. \] 

Before proving Theorem \ref{t.mainintro}, we use Lemmas \ref{l.mjw}, \ref{l.fmjw}, and \ref{l.induct} to establish a lemma comparing the degrees of $F_{k, \sigma}(q)$ coming from different choices of skein elements $\sigma = (d_1, d_2, \sigma_t, \sigma_b)$ in the expansions of the Jones-Wenzl projectors. Recall $\overline{T_{k, \sigma}}$ is the skein element obtained by replacing all the Jones-Wenzl projectors of $T_{k, \sigma}$ by the identity skein element $id$. 
\begin{lem} \label{l.main}
Suppose the parameters $k_1 \geq 0 , k_2 \geq 0$ satisfy $k_1+k_2 \geq n$, and a choice of skein elements $\sigma = (d_1, d_2, \sigma_t, \sigma_b)$ is such that $\overline{T_{k, \sigma}}$ has $2(\ell_1+\ell_2)$ through strands. Then there exists a set of parameters $\ell = (\ell_1, \ell_2)$ and a choice of skein elements  $\overline{\sigma} = (\overline{d}_1 = id, \overline{d}_2 = id, \overline{\sigma}_t, \overline{\sigma}_b)$ in the expansions of the corresponding Jones-Wenzl projectors, such that $\overline{T_{\ell, \overline{\sigma}}}$ has the same number of through strands as $\overline{T_{k, \sigma}}$, $\ell_i \leq k_i$ for $i=1, 2$, and $\ell_i \leq \ell_j$ if $k_i \leq k_j$. Moreover, letting $l_1 = k_1 - \ell_1 $ and $l_2 = k_2-\ell_2$  and $\tilde{\ell}_1, \tilde{\ell}_2$ be the intermediate through strands coming from choosing $\sigma_t$ and removing circles attached to the bottom projector, see Figure \ref{f.dt}, we have 
 \begin{equation} \label{e.ineq}
\deg F_{k, \sigma}(q) - \deg F_{\ell, \overline{\sigma}}(q) 
\leq \sum_{i=1}^2 l_i(2\ell_i + l_i). 
\end{equation} 

\begin{figure}[H]
\def \svgwidth{.5\columnwidth} 
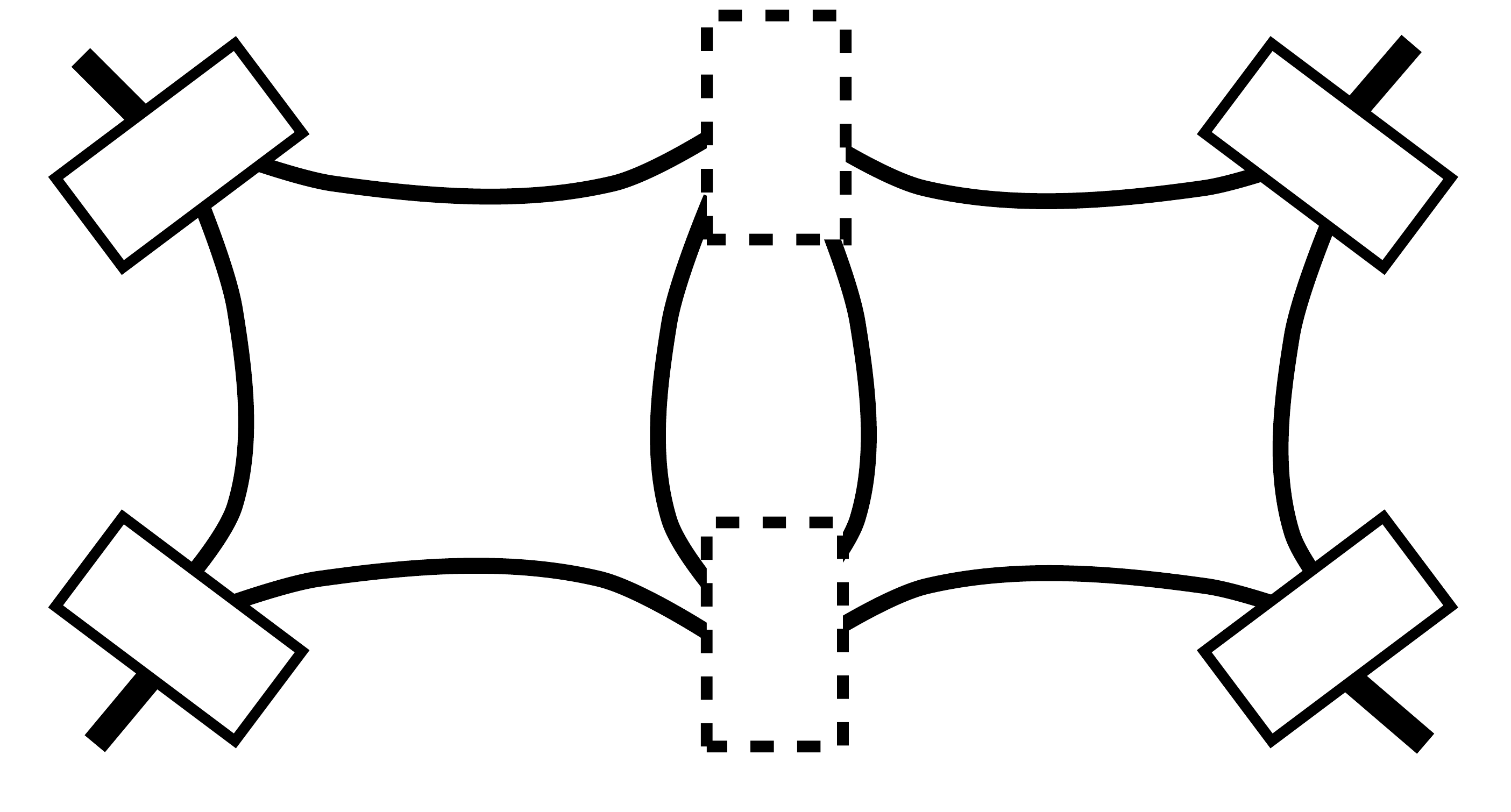
\caption{\label{f.join} A picture of the skein element $T_{\ell, \overline{\sigma}}$. The middle pair of projectors are shown with dashed boxes.}
\end{figure}
\end{lem} 
\begin{proof}
First note that the number of through strands $2(\ell_1+\ell_2)$ of $\overline{T_{k, \sigma}}$ satisfies $2(\ell_1+\ell_2)\leq 2n$. Let $\ell = (\ell_1, \ell_2)$, with $\ell_i \leq k_i$ and such that $\ell_1+\ell_2\leq n$.  By Lemmas \ref{l.fmjw} and \ref{l.induct}, we can always find a sequence of choices of skein elements $\overline{\sigma} = (\overline{d}_1, \overline{d}_2, \overline{\sigma}_t, \overline{\sigma}_b)$ with the number of through strands of $\overline{T_{\ell, \overline{\sigma}}}$ equal to that of  $\overline{T_{k, \sigma}}$ by letting $\overline{d}_1, \overline{d}_2$ be the identity and $\overline{\sigma}_t=\sigma_t^1$ or $\sigma_t^2$ be as in Lemma \ref{l.fmjw}, depending on whether $\ell_2 > \ell_1$ or $\ell_2 < \ell_1$, and $\overline{\sigma}_b$ its mirror image via a reflection across the horizontal axis. 

Suppose for $\sigma = (d_1, d_2, \sigma_t, \sigma_b)$,  the skein element $d_i$ for the middle projector for each twist region $w_i$ is not the identity, then choosing it results in $\tilde{k_1}, \tilde{k_2}$ parameters as in the first figure of the second row of Figure \ref{f.dt}.  Note $\tilde{k_1} + \tilde{k_2}\geq \ell_1 + \ell_2$ and $\deg( P(d_1)P(d_2)) \leq \deg(P(\overline{d}_1)P(\overline{d}_2))$. 

\begin{figure}[H] 
\def \svgwidth{\columnwidth}
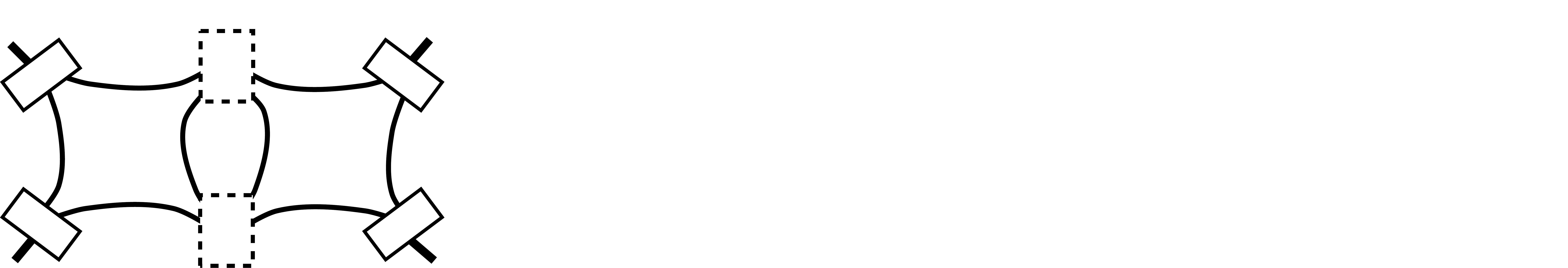
\caption{\label{f.proofdex} The step generating intermediate through strands $\tilde{\ell_1}$, $\tilde{\ell_2}$ after first choosing a skein element $\sigma_t$ on top }
\end{figure} 

In the next step, a skein element $\sigma_t$ is chosen for the top projector,  which will result in intermediate $\tilde{\ell_1}+\tilde{\ell_2} + k_1 + k_2$ through strands for the top half of the skein element $\overline{T_{k, d_1, d_2, \sigma_t}}$, see Figure \ref{f.proofdex}. Note $\tilde{\ell_1}+\tilde{\ell_2}+k_1+k_2 \geq 2(\ell_1+\ell_2)$. If $\tilde{\ell_1}+\tilde{\ell_2}+k_1+k_2 = 2(\ell_1+\ell_2)$, then the choice of the skein element $\sigma_b$ of the bottom projector is necessarily the mirror image of that of $\sigma_t$ via a reflection across the horizontal axis, with top horizontal strands removed. Call this choice of expansion $\hat{\sigma}  = (\hat{d_1} = id, \hat{d_2}=id, \hat{\sigma_t}, \hat{\sigma_b})$. Denote the intermediate through strands of $\hat{\sigma}$ as $\hat{\ell_1}, \hat{\ell_2}$. See Figure \ref{f.proofc} for an example of $\hat{\sigma}_t$. 

We first compare $\sigma$ to $\hat{\sigma}$. Suppose the choice of skein elements $(d_1, d_2, \sigma_t)$ results in intermediate through strands $\tilde{\ell_1}+\tilde{\ell_2}> \hat{\ell_1}+\hat{\ell_2}$ for the top half of the  skein element $\overline{T_{k, d_1, d_2, \sigma_t}}$ (the result of choosing $d_1, d_2, \sigma_t$ for the Jones-Wenzl projectors of $T$ then replacing all the remaining projectors by the identity). Without loss of generality assume $\tilde{\ell}_2 \geq \tilde{\ell}_1$.  Comparing the skein elements $\sigma_t$ and $\hat{\sigma_t}$,  this means  $\tilde{\ell}_1 + \tilde{\ell}_2 > \hat{\ell}_1 + \hat{\ell}_2$ and $\tilde{t} + \tilde{b} < \hat{t} + \hat{b}$. We must have $\hat{\ell}_2 - \hat{\ell}_1 = \tilde{\ell}_2 - \tilde{\ell}_1$ and $\hat{\ell}_1 = \tilde{\ell}_1-l$ for $l>0$.  This is because with fixed parameters $k_1, k_2$ and using Lemma \ref{l.fmjw}, the choice of skein element for $\sigma_t$ corresponding to a given number of through strands is determined by the number of turnbacks (which is $\tilde{\ell}_1$ if $\tilde{\ell}_2>\tilde{\ell}_1$ and $\tilde{\ell}_2$ otherwise).  Then, since $\tilde{\ell_1}+\tilde{\ell_2} > \hat{\ell_1} + \hat{\ell_2}$,  the choice of the skein element $\sigma_b$ must decrease the number of resulting through strands to $\hat{\ell}_1 + \hat{\ell}_2$ so that $\overline{T_{k, \sigma}}$ would still have $2(\ell_1+\ell_2)$ through strands. The possibilities from Lemmas \ref{l.fmjw} and \ref{l.induct} are shown in Figure \ref{f.proofc}.
\begin{figure}[H]
\def \svgwidth{\columnwidth}
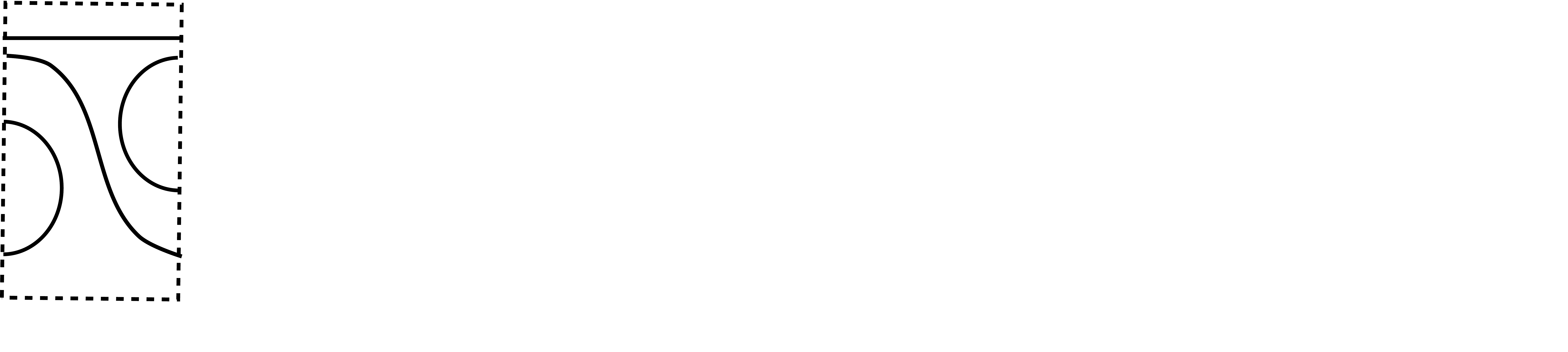
\caption{\label{f.proofc} Left: Comparing $\sigma_t$ and $\hat{\sigma_t}$. Right: Possibilities for $\sigma_b$ corresponding to $\sigma_t$. We changed $\ell$ to $e$ from Lemma \ref{l.induct}.}
\end{figure} 

We show $\deg F_{k, \sigma} \leq \deg F_{k, \hat{\sigma}}$ for all the cases of $\sigma_t$ shown in Figure \ref{f.proofc} and note that the arguments are the same for their mirror images via a reflection across the vertical axis. We compute the difference between $\deg F_{k, \sigma}, \deg F_{k, \hat{\sigma}}$ using the explicit formulas of the quantum factorials. Since the algebraic computations based on the formulas of Lemmas \ref{l.fmjw} and \ref{l.induct} are straightforward, we will omit them and show the results of the computations. To simplify notations, we also let 
$p(x, y, z, t) = \deg P(d)$, where $d$ is the skein element in Lemma \ref{l.khprop}. Note by the preceding discussion,  we know $\tilde{\ell}_1 = \hat{\ell}_1+l,\tilde{b} = \hat{b}-l$, $\tilde{t} = \hat{t}-l$, and $n = \hat{b} + \hat{\ell}_1 + \hat{\ell}_2 +  \hat{t}$. 
\begin{itemize}
\item[\textbf{Case 1: $\tilde{\ell}_2 - \tilde{\ell}_1 = 0$}.] This would force $\sigma_b = \sigma_b^3$ as in Figure \ref{f.proofc}, and  $\hat{\ell}_2 - \hat{\ell}_1 = 0$. We have 
\begin{align*}
&\deg(F_{k, \sigma}) - \deg(F_{k, \hat{\sigma}}) = \deg(\P(\sigma_t) \P(\sigma_b) R(\tilde{b}) \P(d_1)\P(d_2))- \deg(\P(\hat{\sigma}_t) \P(\hat{\sigma}_b) R(\hat{b}) \P(id)\P(id)) \\
&=\underbrace{p(\tilde{b}, \tilde{\ell}_1, 0, \tilde{t})}_{\deg(\P(\sigma_t))} + \underbrace{\tilde{b}}_{\deg R(\tilde{b})}+ \underbrace{p(b, e, 0, 0)}_{\deg(\P(\sigma_b))} - \deg [n-\tilde{b}]!-(\underbrace{p(\hat{b}, \hat{\ell}_1, 0, \hat{t})}_{\deg(\P(\hat{\sigma}_t)\P(\hat{\sigma}_b))} + \underbrace{\hat{b}}_{\deg R(\hat{b})}-\underbrace{p(\hat{b}, \hat{\ell}_1, 0, 0)}_{\deg(\P(\hat{\sigma}_b))}-\deg[n-\hat{b}]!  ) \\
\intertext{We can use the fact that $T_{k, \sigma}$ and $T_{k, \hat{\sigma}}$ have the same number of through strands to get $b = \hat{t}$, $e = \hat{\ell}_1$, and $t = l_1$. Plugging  in and simplifying, we get}
&\deg(F_{k, \sigma}) - \deg(F_{k, \hat{\sigma}})  \leq  -l (1 + l + 2\hat{\ell}_1) \leq 0.
\end{align*} 
since $l, \hat{\ell}_1 \geq 0$ and  $\deg(\P(d_1)\P(d_2)) \leq \deg(\P(id)\P(id))$ . 
\item[\textbf{Case 2: $\tilde{\ell}_2 - \tilde{\ell}_1 \not= 0$}.] Either $\sigma_b = \sigma_b^2$ or $\sigma_b^1$ as in Figure \ref{f.proofc}. 
\begin{itemize}
\item[2(a) $\sigma_b = \sigma_b^2$.] 
\begin{align*} &\deg(F_{k, \sigma}) - \deg(F_{k, \hat{\sigma}}) = \deg(\P(\sigma_t) \P(\sigma_b) R(c) \P(d_1)\P(d_2))- \deg(\P(\hat{\sigma}_t) \P(\hat{\sigma}_b) R(\hat{c}) \P(id)\P(id))  \\ 
&= p(\tilde{b}, \tilde{\ell}_1, \tilde{\ell}_2-\tilde{\ell}_1,  \tilde{t}) + \tilde{b} + \deg \left[ \begin{array}{c} s+r_1 \\ s \end{array} \right]^{-1}+ \deg \left[ \begin{array}{c} r_1+r_2 \\ r_1 \end{array} \right] + p(b, e, 0, s) - \deg [n-\tilde{b}]!\\
&-(p(\hat{b}, \hat{\ell}_1, 0, \hat{t})+ \hat{b}-p(\hat{b}, \hat{\ell}_1, 0, 0)-\deg[n-\hat{b}]!) \\ 
\intertext{Again using the fact that $T_{k, \sigma}$ and $T_{k, \hat{\sigma}}$ have the same number of through strands, we get $b = \hat{t}$,  $e =  \hat{\ell}_2, s = l - (\hat{\ell}_2 - \hat{\ell}_1), r_1 = \hat{\ell}_2 - \hat{\ell}_1$, and $r_2 = \hat{\ell}_1 $. Plug in and simplify:   } 
&\deg(F_{k, \sigma}) - \deg(F_{k, \hat{\sigma}})  \leq  -l (1 + l + 2\hat{\ell}_2) \leq 0. 
\end{align*} 
\item[2(b) $\sigma_b = \sigma_b^1$.] With similar arguments, we get 
\[\deg(F_{k, \sigma}) - \deg(F_{k, \hat{\sigma}})  \leq -l (1 + l + 2\hat{\ell}_2) \leq 0.   \] 
\end{itemize}
\end{itemize}

Thus without loss of generality, to prove the last statement of the lemma, Equation \eqref{e.ineq}, we may assume $d_1 = d_2 = id$, and $\sigma_b$ is the mirror image of $\sigma_t$ via a reflection across the horizontal axis (top horizontal strands removed), with intermediate through strands satisfying $\tilde{\ell}_1 + \tilde{\ell}_2 + k_1 + k_2 = 2(\ell_1 + \ell_2)$. We compare $\sigma_t$ to $\overline{\sigma}_t$.

\begin{figure}[H]
\def \svgwidth{.5\columnwidth}
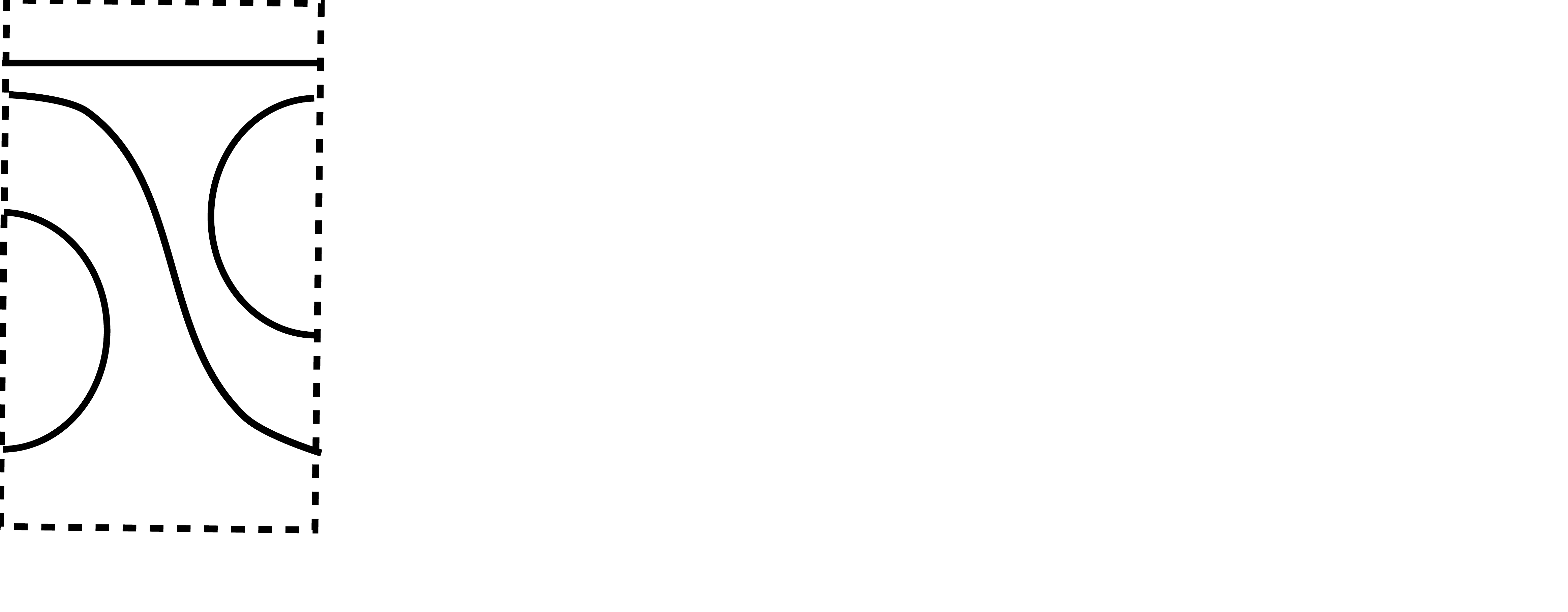
\caption{The case $\ell_2 \geq \ell_1$ and $k_2 \geq k_1$ is shown. }
\end{figure} 
Let $k_1 - \ell_1 = l_1$ and $k_2-\ell_2 = l_2$,  then $\tilde{\ell}_1+l_1 = \ell_1$ and $\tilde{\ell}_2+l_2 = \ell_2$. We have, plugging into the equations for the degrees and assuming $k_1 \leq k_2$ (the other case is similar), 
 \begin{align*}
 \deg(F_{k, \sigma}) - \deg(F_{\ell, \overline{\sigma}}) &\leq k_1-\tilde{\ell}_1 + 2 l_2 \tilde{\ell}_1 + 2 l_1 (l_2 + \tilde{\ell}_2)  = 2l_1 +  2 l_2 \tilde{\ell}_1 + 2 l_1 l_2 + 2l_1 \tilde{\ell}_2 \leq \sum_{i=1}^2 l_i(2\ell_i + l_i). 
 \end{align*} 
 The argument for when $k_1 \geq k_2$ is similar. 
\end{proof} 

We prove a refined version of Theorem \ref{t.mainintro}. We will focus  on the degree of the Kauffman bracket and suppress the monomial from the writhe term in the $n$ colored Jones polynomial. 

\begin{thm} \label{t.ssum} Let $k = (k_0, k_1, \ldots, k_m) \in \mathbb{Z}_{\geq 0}^{m+1}$ with $k_i \leq n$ and $\sigma =  (d_1, \ldots, d_m, \sigma_t^1, \ldots, \sigma_b^{m-1})$ as described in the introduction. For the pretzel link $P=P(w_0, w_1, \ldots, w_m)$ with standard diagram $L$, we have 
\[  \langle L^n \rangle =\sum_{0\leq k_i\leq n, \sigma} G_{k, \sigma} \langle T^n_{k, \sigma} \rangle  =  \sum_{k_0 = \sum_{i=1}^m k_i } \mathcal{G}_{k},\]
where 
\begin{align*} 
 \mathcal{G}_{k} &=\left(\prod_{i=0}^m \frac{[2k_i+1]}{\theta(n, n, 2k_i)} U(w_i, k_i)\right) \left( \prod_{i=1}^{m-1} \left( \frac{[n-\sum_{j=1}^i k_j]![n-k_{i+1}]!}{[n-\sum_{j=1}^{i+1} k_j]![n]!} \right)^2 \right) \langle \mathcal{T}_{k_0} \rangle + l(k)\end{align*} 
Here $l(k)$ denotes lower order terms, and $\langle \mathcal{T}_{k_0} \rangle$ is the Kauffman bracket of the following skein element: 
\begin{figure}[H]
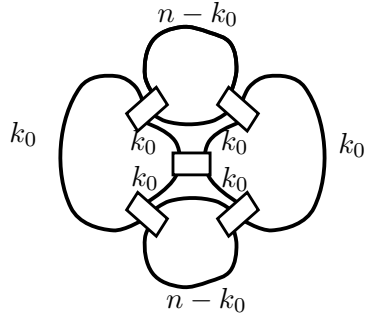

\def \svgwidth{.3\columnwidth}
 
\caption{\label{f.tkop} The skein element $\mathcal{T}_{k_0}$.}
\end{figure}  
Moreover, 
\begin{equation} \label{e.ddiff} 
 \deg \mathcal{G}_{k} - \deg l(k) \geq  2 \min_{0 \leq i \leq m} \{|w_i|-1\} \min_{0\leq i \leq m} \{k_i \}   \end{equation} 
\end{thm}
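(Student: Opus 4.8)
The plan is to unwind the state-sum \eqref{e.ssum} for $\langle L^n\rangle$, organize the terms $G_{k,\sigma}$ by the fusion parameters $k=(k_0,\dots,k_m)$ and the number of through strands of $\overline{T^n_{k,\sigma}}$, and then show that the top-degree contribution comes exactly from the ``maximal through-strand'' configuration, which is the one that glues the $m$ side-by-side pieces into the skein element $\mathcal{T}_{k_0}$ of Figure \ref{f.tkop} and forces $k_0=\sum_{i=1}^m k_i$. First I would recall from Lemma \ref{l.mjw} that, for the center projector of each $T_i$, any non-vanishing choice $d_i$ is governed by a parameter $\tilde k_i\le k_i$, and the identity choice $\tilde k_i=k_i$ has the largest coefficient degree; similarly, by Lemmas \ref{l.fmjw} and \ref{l.induct}, the choices $\sigma_t^i,\sigma_b^i$ of the framing projectors shared by $T_i$ and $T_{i+1}$ are governed by intermediate parameters, and the maximal-through-strand choice is $\overline{\sigma}_t^i,\overline{\sigma}_b^i$ with coefficient $\bigl(\tfrac{[n-k_i']![n-k_{i+1}]!}{[n-k_i'-k_{i+1}]!}\bigr)$-type quantum factorials, where $k_i'=\sum_{j\le i}k_j$. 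Assembling these, the $\langle\mathcal{T}_{k_0}\rangle$ term with the stated prefactor is precisely $\mathcal{G}_k$ minus lower-order terms, giving the first displayed identity.

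The heart of the argument is the degree estimate \eqref{e.ddiff}, and here the strategy is an induction on $m$ using Lemma \ref{l.main} as the building block. Lemma \ref{l.main} compares, for two adjacent pieces $T_i,T_{i+1}$, the degree of $F_{k,\sigma}(q)$ for an arbitrary choice $\sigma$ against that for the maximal through-strand choice $\overline\sigma$ with the smaller parameters $\ell=(\ell_1,\ell_2)$, $\ell_i\le k_i$, and yields $\deg F_{k,\sigma}-\deg F_{\ell,\overline\sigma}\le \sum_{i} l_i(2\ell_i+l_i)$ where $l_i=k_i-\ell_i$. I would first reduce the general pretzel case to repeated applications of this: choosing skein elements for the $m$ center projectors and then propagating the $\sigma_t^i,\sigma_b^i$ choices left-to-right, each deviation from the maximal configuration is localized to one adjacent pair and contributes a loss bounded by an expression of the form $l(2\ell+l)$ in the relevant local parameters. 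Summing these bounds over all the deviating pairs and using that the fusion coefficients $\frac{[2k_i+1]}{\theta(n,n,2k_i)}U(w_i,k_i)$ and the removal factors $R(c_i)$ are common to $G_{k,\sigma}$ and $\mathcal{G}_k$ (so they cancel in the difference), I would get an upper bound for $\deg l(k)-\deg\mathcal{G}_k$ that is negative, and in fact bounded above by $-2\min_i\{|w_i|-1\}\min_i\{k_i\}$: the factor $\min_i\{|w_i|-1\}$ enters because each ``turnback'' created by a non-maximal choice must be carried through at least $\min_i(|w_i|-1)$ crossings in the smallest twist region before it can close up, and $\min_i\{k_i\}$ is the smallest number of strands available to produce such a turnback while keeping all four framing projectors from being capped off.

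The main obstacle, and the step I would spend the most care on, is making precise the claim that ``the loss from a non-maximal choice of $\sigma$ is at least $2(|w_i|-1)$ per unit of through-strand deficit.'' The estimates in Lemma \ref{l.main} are stated only for the bare product $F_{k,\sigma}(q)$ of Khovanov coefficients and circle-removal factors and do \emph{not} see the twist regions at all; the extra gain of $2(|w_i|-1)$ comes from the Kauffman brackets $\langle T^n_{k,\sigma}\rangle$ versus $\langle T^n_{k,\overline\sigma}\rangle$, i.e.\ from resolving the $|w_i|$ crossings in each twist region against the altered tangle. So the real work is a diagrammatic comparison: when a choice $\sigma$ has $l$ fewer through strands in the $i$-th slot than the maximal choice, the resolved diagram $T^n_{k,\sigma}$ has, relative to $\mathcal{T}_{k_0}$, a ``$2l$-strand cap'' threaded through the $i$-th twist region of $|w_i|$ crossings and each such strand pair, after Kauffman-resolving the twists (via the $U(w_i,k_i)$ and $\theta$ bookkeeping already in \eqref{e.funtwist}), costs $2(|w_i|-1)$ in degree compared to leaving those strands as through strands. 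Combining this diagrammatic loss with the algebraic loss bound $\sum l_i(2\ell_i+l_i)$ from Lemma \ref{l.main}, and optimizing over which slot $i$ the deficit occurs in (which is why the minimum over $i$ appears), yields \eqref{e.ddiff}. I would organize the proof so that this twist-region estimate is isolated as a short sub-lemma, after which the induction and the summation are routine.
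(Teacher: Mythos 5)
Your first paragraph and your overall plan (organize by through strands, reduce to the maximal configuration, iterate Lemma \ref{l.main} left-to-right across adjacent pairs) match the paper's proof. But there is a genuine gap in how you obtain the factor $2\min_i\{|w_i|-1\}$ in \eqref{e.ddiff}, and it stems from a misreading of what is being compared. The terms absorbed into $l(k)$ are not states with the same fusion parameters $k$ and a worse $\sigma$; they are states $(\kappa,\tau)$ with \emph{larger} fusion parameters $\kappa_i=k_i+l_i$ whose skein element nevertheless has only $2k_0$ through strands. Consequently your assertion that ``the fusion coefficients $\frac{[2k_i+1]}{\theta(n,n,2k_i)}U(w_i,k_i)$ \dots are common to $G_{k,\sigma}$ and $\mathcal{G}_k$ (so they cancel in the difference)'' is false, and it is precisely this non-cancellation that carries the $w_i$-dependence: the paper computes $d(k_i+l_i,w_i)-d(k_i,w_i)=l_i-l_i(1+2k_i+l_i)w_i$ for $d(\kappa,w)=\deg\bigl(\tfrac{[2\kappa+1]}{\theta(n,n,2\kappa)}U(w,\kappa)\bigr)$, adds the Khovanov-coefficient loss $\sum_i l_i(2k_i+l_i)$ from the iterated Lemma \ref{l.main}, and the two combine to $-\sum_i l_i(w_i-1)(1+2k_i+l_i)\le -2\min_i\{|w_i|-1\}\min_i\{k_i\}$. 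No new lemma is needed beyond what is already in Sections \ref{s.exjw}--\ref{s.org}.

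Your alternative mechanism --- that the deficit of $2(|w_i|-1)$ per unit of through-strand loss comes from ``Kauffman-resolving the $|w_i|$ crossings in each twist region against the altered tangle'' --- cannot work as stated, because in the state sum \eqref{e.ssum} the twist regions no longer exist as crossings: they were eliminated at the outset by the fusion and untwisting formulas \eqref{e.funtwist}, and every $T^n_{k,\sigma}$ is a crossingless diagram whose bracket degree is insensitive to $w_i$. The sub-lemma you propose to isolate (a degree cost of $2(|w_i|-1)$ for each capped strand pair threaded through a twist region) has no object to apply to in this setup; to pursue it you would have to abandon the fusion expansion entirely and work with the raw cabled diagram $L^n$, which is a different proof requiring substantial diagrammatic estimates not present in the paper. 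To repair your argument, replace the claimed cancellation of fusion coefficients by the explicit comparison of $U(w_i,\kappa_i)$ against $U(w_i,k_i)$ across the change of parameters supplied by Lemma \ref{l.main}.
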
 
\begin{proof}
Recall $T^n_{k, \sigma}$  is the skein element that comes from applying $\sigma$ to $T_k$. This is a skein element of the form as shown below: 
 \begin{figure}[H]
\def \svgwidth{.3\columnwidth}
\begingroup%
  \makeatletter%
  \providecommand\color[2][]{%
    \errmessage{(Inkscape) Color is used for the text in Inkscape, but the package 'color.sty' is not loaded}%
    \renewcommand\color[2][]{}%
  }%
  \providecommand\transparent[1]{%
    \errmessage{(Inkscape) Transparency is used (non-zero) for the text in Inkscape, but the package 'transparent.sty' is not loaded}%
    \renewcommand\transparent[1]{}%
  }%
  \providecommand\rotatebox[2]{#2}%
  \newcommand*\fsize{\dimexpr\f@size pt\relax}%
  \newcommand*\lineheight[1]{\fontsize{\fsize}{#1\fsize}\selectfont}%
  \ifx\svgwidth\undefined%
    \setlength{\unitlength}{482.9785484bp}%
    \ifx\svgscale\undefined%
      \relax%
    \else%
      \setlength{\unitlength}{\unitlength * \real{\svgscale}}%
    \fi%
  \else%
    \setlength{\unitlength}{\svgwidth}%
  \fi%
  \global\let\svgwidth\undefined%
  \global\let\svgscale\undefined%
  \makeatother%
  \begin{picture}(1,0.85109139)%
    \lineheight{1}%
    \setlength\tabcolsep{0pt}%
    \put(0,0){\includegraphics[width=\unitlength,page=1]{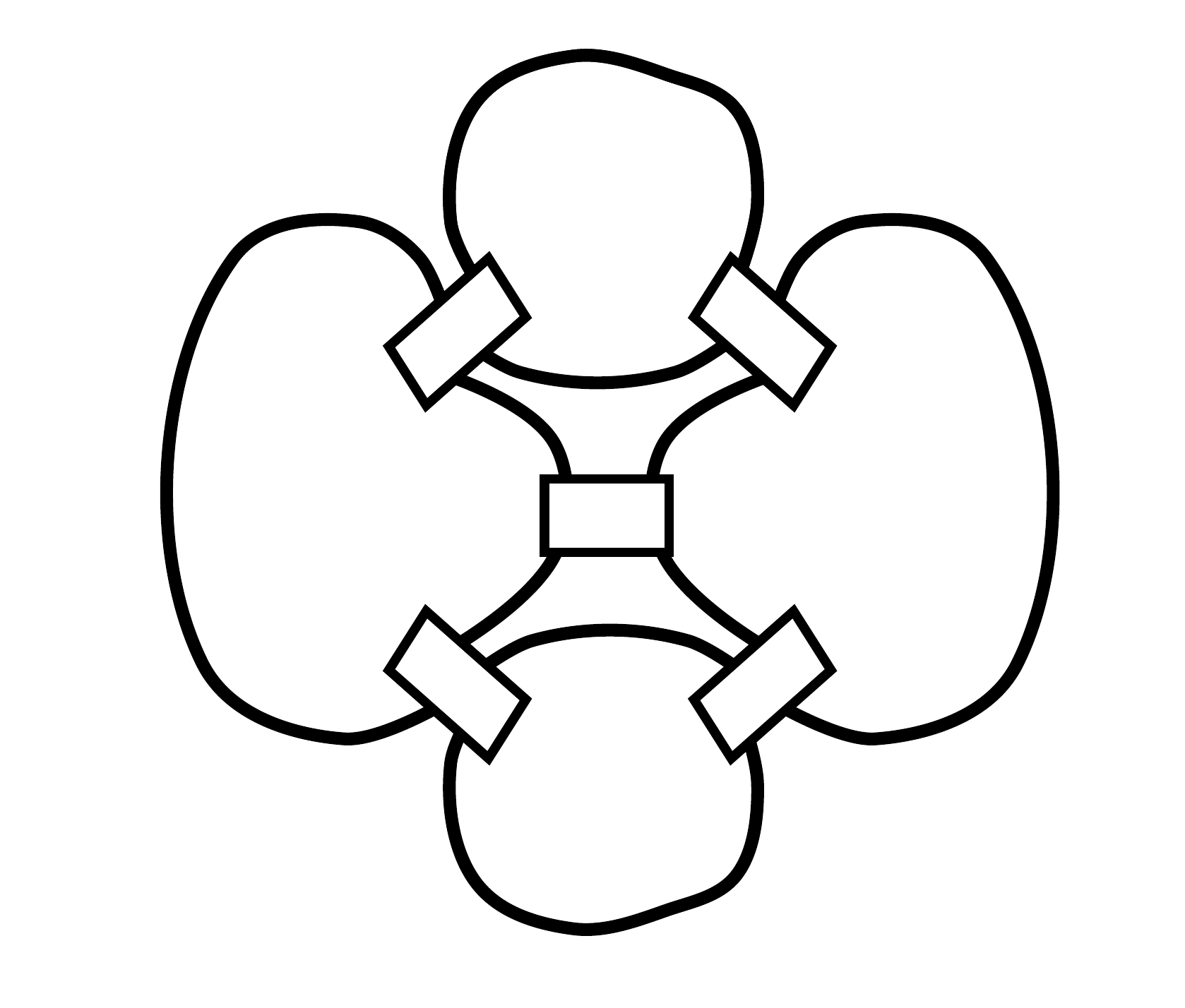}}%
    \put(0.59553863,0.45585509){\makebox(0,0)[lt]{\lineheight{1.25}\smash{\begin{tabular}[t]{l}$k_0$\end{tabular}}}}%
    \put(0.33743744,0.45585509){\makebox(0,0)[lt]{\lineheight{1.25}\smash{\begin{tabular}[t]{l}$k_0$\end{tabular}}}}%
    \put(0.59649247,0.35026046){\makebox(0,0)[lt]{\lineheight{1.25}\smash{\begin{tabular}[t]{l}$k_0$\end{tabular}}}}%
    \put(0.34149703,0.35336619){\makebox(0,0)[lt]{\lineheight{1.25}\smash{\begin{tabular}[t]{l}$k_0$\end{tabular}}}}%
    \put(0.92858438,0.44653795){\makebox(0,0)[lt]{\lineheight{1.25}\smash{\begin{tabular}[t]{l}$c$\end{tabular}}}}%
    \put(-0.00313404,0.47759522){\makebox(0,0)[lt]{\lineheight{1.25}\smash{\begin{tabular}[t]{l}$c$\end{tabular}}}}%
    \put(0.41613918,0.8192253){\makebox(0,0)[lt]{\lineheight{1.25}\smash{\begin{tabular}[t]{l}$n-c$\end{tabular}}}}%
    \put(0.44054132,0.00729927){\makebox(0,0)[lt]{\lineheight{1.25}\smash{\begin{tabular}[t]{l}$n-c$\end{tabular}}}}%
  \end{picture}%
\endgroup%
 
\caption{\label{f.gtko} $T^n_{k, \sigma}$}
\end{figure}  
Note that if $k_0 >  c$ then $T^n_{k, \sigma} = 0$ as proved in \cite{Lee17}. Thus, we can assume $k_0 \leq c$. Let
\[ d(\kappa, w) =\deg \left( \frac{[2\kappa+1]}{\theta(n, n, 2\kappa)} U(w, \kappa) \right).\]
Note that $d(\kappa, w)$ monotonically increases in $\kappa$ if $w < 0$. Therefore we can also assume $k_0 = c$.  We organize the state sum \eqref{e.ssum} by the number of strands $c$ of $T^n_{k, \sigma}$.  A pair $(k, \sigma)$ is \emph{tight} if $c = k_0 = \sum_{i=1}^m k_i$. 

For every pair of parameters and state $(\kappa, \tau)$ with $2c$ through strands for $T^n_{\kappa, \tau}$, there is a tight state $(k(\kappa, \tau), \sigma(\kappa, \tau))$ with the same number of through strands $2c$ and such that $k_i \leq \kappa_i$ and $k_i \leq k_j$ if $\kappa_i \leq \kappa_j$ for all $1\leq i \leq m$ by Lemma \ref{l.main}.  Note $\sigma(k, \tau)$ is determined by $k(\kappa, \tau) = (k_0, k_1, \ldots, k_m)$ as follows: Using Lemma \ref{l.fmjw}, choose $d_i  = id$ for all $1\leq i \leq m$ and choose $\sigma_t^1, \sigma_b^1$ to be the state that will result in $2(k_1 + k_2)$ through strands for $\overline{T_{k, d_1, \ldots, d_m, \sigma_t^1, \sigma_b^1}}$, where $T_{k, d_1, \ldots, d_m, \sigma_t^1, \sigma_b^1}$ is the skein element that comes from applying $d_1, d_2, \sigma_t^1, \sigma_b^1$ to the portion of $T_k$ that is the union of $T_1, T_2$.  Then choose  $\sigma_t^2, \sigma_b^2$ to be the state that will result in $2(k_1+k_2+k_3)$ through strands for $\overline{T_{k, d_1, \ldots, d_m, \sigma_t^1, \sigma_b^1, \sigma_t^2, \sigma_b^2}}$, and so on for the rest of $\sigma_t^i, \sigma_b^i$, which are states that will result in $2(k_1+k_2+\cdots k_{i+1})$ through strands for  $\overline{T_{k, d_1, \ldots, d_m, \sigma_t^1, \sigma_b^1, \sigma_t^2, \sigma_b^2, \ldots, \sigma_t^i, \sigma_b^i}}$,where $T_{k, d_1, \ldots, d_m, \sigma_t^1, \sigma_b^1, \ldots, \sigma_t^i, \sigma_b^i}$ is the skein element that comes from applying $d_1, d_2, \ldots, d_i, \sigma_t^1, \sigma_b^1, \ldots, \sigma_t^i, \sigma_b^i$ to the portion of $T_k$ that is the union of $T_1, T_2, \ldots, T_{i+1}$. It is possible to have more than one $(k(\kappa, \tau), \sigma(\kappa, \tau))$ for a pair $(\kappa, \tau)$, since there could be more than one $k(\kappa, \tau)$ that satisfies the above conditions. 

To prove that every term corresponding to the pair $(\kappa, \tau)$ that is not tight is a lower order term in the sum, we compare the degree of its coefficient function $G_{\kappa, \tau}$ to that of the tight state corresponding to the pair $(k(\kappa, \tau), \sigma(\kappa, \tau))$. Recall the function $G_{\kappa, \tau}$ is the product of coefficients multiplying $\langle T^n_{\kappa, \tau}\rangle$ as in \eqref{e.ssum}. 

Write $k_i = k_i(\kappa, \tau)$ and let $\kappa_i = k_i + l_i$. We get 
\begin{equation} 
d(k_i+l_i, w_i) - d(k_i, w_i) =l_i - l_i(1+2k_i + l_i) w_i.  
\end{equation} 
For an $m$-tangle pretzel link and the skein element $T$, choosing skein elements $\sigma_t^1$ and $\sigma_b^1$ between the pair of skein elements $T_{1} $ and $T_{2}$ creates a new skein element, say $T_{12}$. Similarly, we denote by $T_{123}$ the skein element created by choosing $\sigma_t^2$ and $\sigma_b^2$ between $T_{12}$ and $T_3$, and so on, until $T_{123\cdots (m-1)}$.    Let $F_{1}=\deg F_{ (\kappa_1, \kappa_2),  (\tau_t^1, \tau_b^1)} = \deg(\P(\tau_t^1)\P(\tau_b^1)R(c_1))$, $\overline{F}_{1}=\deg F_{(k'_1, k'_2),  (\overline{\sigma}_t^1, \overline{\sigma}_b^1)} = \deg(\P((\overline{\sigma}_t^1)\P(\overline{\sigma}_b^1))$, where $2(k'_1 +k'_2) = \tilde{\ell}_1 + \tilde{\ell}_2 + \kappa_1 + \kappa_2$ from applying Lemma \ref{l.main} to these skein elements successively. Similarly define $F_{12} = \deg F_{ (\kappa_{12} = k'_1 + k'_2, \kappa_3),  (\tau_t^2, \tau_b^2)} = \deg( \P(\tau_t^2)\P(\tau_b^2)R(c_2))$ and $\overline{F}_{12} = \deg F_{ (k'_{12} = k''_1 + k''_2, k'_3),  (\overline{\sigma}_t^2, \overline{\sigma}_b^2)} = \deg( \P(\overline{\sigma}_t^2)\P(\overline{\sigma}_b^2))$, 
\[ F_{12\cdots(i)}=\deg F_{(\kappa_{1\cdots(i)}=k^{(i-1)}_1 + \cdots + k^{(i-1)}_{i}, \kappa_{i+1}), (\tau_t^{i}, \tau_b^{i})}, \]   
\[ \overline{F}_{12\cdots(i)} = \deg F_{(k'_{1\cdots(i)} = k^{(i)}_1 + \cdots + k^{(i)}_{i}, k'_{i+1}), (\overline{\sigma}_t^{i}, \overline{\sigma}_b^{i})}.\] Writing it all out with $l'_{1\cdots(i)} = \kappa_{1\cdots(i)} - k'_{1\cdots(i)}, l'_i = \kappa_{i}-k'_i$ and applying Lemma \ref{l.main}, we get
\begin{align*}
\deg G_{\kappa, \tau}  - \deg  G_{k(\kappa, \sigma), \sigma(\kappa, \tau)}  &= \left(\sum_{i=0}^m d(k_i+l_i, w_i) -  d(k_i, w_i)\right) + \left(\sum_{i=1}^{m-1} F_{12\cdots(i)} -\overline{F}_{12\cdots(i)}\right),  \text{where} \\
\sum_{i=1}^{m-1} F_{12\cdots(i)} -\overline{F}_{12\cdots(i)} 
&\leq  l'_1(2k'_1 + l'_1)  + l'_2(2k'_2 + l'_2) \\ 
&+ l'_{12}(2k'_{12} + l'_{12}) +   l'_3(2k'_3 + l'_3) \\ 
&+ \cdots \\
&+ l'_{123\cdots(m-1)}(2k'_{123\cdots(m-1)} + l'_{123\cdots(m-1)}) + l'_m(2k'_m + l'_m). \\
\intertext{Since $\sum_{i=1}^{m-1} l'_{12\cdots(i)} = \sum_{i=1}^m l_i$ and $k'_{1\cdots(i)}, k'_i \leq k_i$, we can regroup the sum above and get} 
\deg G_{\kappa, \tau}  - \deg  G_{k(\kappa, \sigma), \sigma(\kappa, \tau)}
&  \leq \left(\sum_{i=1}^m l_i - l_i(1+2k_i + l_i) w_i\right) + \left(\sum_{i=1}^m  l_i(2k_i + l_i)\right)\\
&\leq  -2 \min_{0 \leq i \leq m} \{|w_i|-1\} \min_{0\leq i \leq m} \{k_i \}.   \\ 
\end{align*} 
Therefore, every state corresponding to parameters $(\kappa, \tau)$ that are not tight can grouped into the lower order term $l(k)$ of some state with tight parameters $(k, \sigma)$ in $\mathcal{G}_k$. 
\end{proof}

This gives the following corollary, Theorem \ref{t.genstatesum}, which we restate here for the convenience. 
\gssr*
\begin{proof} 
From Theorem \ref{t.ssum}, we simply compute the degree of the leading term of  $\mathcal{G}_k$ corresponding to the choice of skein elements $k, \sigma$ with tight parameters $ k_0 = \sum_{i=1}^m k_i$. Note $\deg\langle \mathcal{T}_{k_0}\rangle$ is $n$, half the number of circles in $\overline{\mathcal{T}_{k_0}}$, since it is an adequate skein element by \cite{Arm13}.  Thus the degree is 
\begin{align*}
& \underbrace{\sum_{i=0}^m w_i(n-k_i + \frac{n^2}{2}-k_i^2) + \sum_{i=0}^m (k_i-n)}_{\text{ fusion and untwisting}}+ \sum_{i=1}^{m-1} \deg{ \left( \frac{[n-\sum_{j=1}^i k_j]![n-k_{i+1}]!}{[n-\sum_{j=1}^{i+1} k_j]![n]!} \right)^2 } + \underbrace{n}_{\text{number of circles in $\mathcal{T}_{k_0}$}}\\
  &= \frac{2n+n^2}{2}\sum_{i=0}^m w_i - \sum_{i=0}^m w_i k_i - \sum_{i=0}^m w_i k_i^2 + \sum_{i=0}^m k_i -nm - \left( \sum_{i=1}^m k_i\right)^2 + \sum_{i=1}^m k_i^2+n.
 \end{align*} 
 This proves the degree formula in the theorem after regrouping. The sign of the leading coefficient is obtained by multiplying the sign of each of the functions in the product. 
\end{proof}

\subsection{3-tangle pretzel knots $P(w_0, w_1, w_2)$.}  \label{ss.3pretzel} 
We restate Theorem \ref{t.3pretzel} here. 
\pretzel*
 
 \begin{proof}
Let $L$ be the standard diagram of the pretzel knot $P(w_0, w_1, w_2)$. We apply Theorem \ref{t.ssum} to write $\langle L^n \rangle$ as a sum $\sum_{k} \mathcal{G}_{k}$. We compute $\deg\langle L^n\rangle$ by first characterizing the terms $\mathcal{G}_{k}$ that maximize the degree $\delta(n, k)$ (see Theorem \ref{t.genstatesum}), and then determining the degree of the leading term that remains after possible cancellations of power series with the same degree but opposite-sign coefficients. Adding the writhe term finishes the proof.  For $x  = (x_0, x_1, \ldots, x_m) \in \mathbb{R}^{m+1}$, we compute the real maximum of the function $\delta(n, x)$, where $n = x_0 = \sum_{i=1}^m x_i$. Denote the critical points of the real maximum by $x^* = (x_0^*, x_1^*, \ldots, x^*_m)$ Since the leading term of $\deg \mathcal{G}_{k}$ has coefficient $(-1)^{w_0(n-k_0)+n+k_0+\sum_{i=1}^m (n-k_i) (w_i-1)}$ and $w_0, w_2$ are odd while $w_1$ is even, we have that $(-1)^{n-k_1}$ determines the sign of the leading term of $\deg \mathcal{G}_{k}$ with degree $\delta(n, k)$. Therefore, if two parameters $k_1$ and $k_1'$ differ by 1, then $(-1)^{n-k_1}  = -(-1)^{n-k_1'}$, and cancellation of lattice maxima of $\delta(n, k)$ occurs when $x^*_1$ in $x^* = (n, x^*_1, x^*_2)$ is a half integer. Let $k = (k_0 = n, k_1 = x^*_1 + 1/2, k_2= n-k_1)$ with respective choice of the skein element $\sigma$ and $k' =  (k_0 = n, k'_1 = x^*_1 - 1/2 = k_1-1, k_2' = n-k_1')$ with respective choice of skein element $\sigma'$, then $\deg \mathcal{G}_{k} = \deg \mathcal{G}_{k'}$ and their leading terms have opposite signs. This results in cancellation. 
 
 In our case with the 3-pretzel, we get 
 \[ x_1^* = \frac{-2n-w_1+w_2+2n w_2}{2(-2+w_1 + w_2)}.  \]  
 For $x_1^* + \frac{1}{2}$ to be an integer, we must have
 \[ n = -1 + \frac{-2+w_1+w_2}{\gcd(w_1-1, w_2-1)}j\] for a positive integer $j\geq 0$.   
With the quadratic integer programming method applied to the degree $\delta(n, k) = \deg \mathcal{G}_k$, when $n \not= -1+\frac{-2+w_1+w_2}{g} j$, there is no cancellation, we get the degree of $J_{P, n}$ as in the theorem.  See \cite{GLV17}. 
 
Let $g=\gcd(w_1-1, w_2-1)$ and fix $n = -1+\frac{-2+w_1+w_2}{g} j$ for $j\geq 1$. Let $v = (w_2-1)\frac{j}{g}$, and $u$ be a non-negative integer. Consider the pair of states  $(\mathbf{k}_u, \sigma_u)$ and $(\mathbf{k}'_u, \sigma'_u)$ with parameters $\mathbf{k}_u = (n,  v +u,  n-v-u)$ and $\mathbf{k}'_u = ( n,   v-u-1,   n-v + u+1)$, and the respective choices of skein elements $\sigma_u$, $\sigma'_u$ in the expansions of the Jones-Wenzl projectors as specified in Theorem \ref{t.ssum}. These are the leading terms of $\mathcal{G}_{\mathbf{k}_u}$ and $\mathcal{G}_{\mathbf{k}'_u}$, respectively.  We have  
  \begin{align} \label{e.k} 
  &G_{\mathbf{k}_u, \sigma_u}(q) \langle T^n_{\mathbf{k}_u, \sigma_u} \rangle^{-1} ([n]!)^2 \frac{\theta(n, n, 2n)}{[2n+1]}\\ 
   &= q^{w_0(n-\frac{2n}{2}+\frac{n^2}{2}- \frac{(2n)^2}{4})}  \frac{[2(u+v]+1]}{\theta(n, n, 2(u+v))} q^{w_1(n-\frac{(2(v+u))}{2}+\frac{n^2}{2}- \frac{(2(v+u))^2}{4})}   \frac{[2n-2(v+u)+1]}{\theta(n, n, 2n-2(v+u))} \notag  \\
  &\cdot  q^{w_2(n-\frac{2n-2(v+u)}{2}+\frac{n^2}{2} - \frac{(2n-2(v+u))^2}{4})}  
 \left( \frac{[n-(v+u))]![n-(n-(v+u))]!}{[0]!} \right)^2 (-1)^{n-v-u}, \notag \\ 
\intertext{where explicitly} 
& \theta(n, n, 2(v+u)) = \frac{[1+n+v+u]![n-v-u]![v+u]![v+u]!}{[2(v+u)]![n]![n]!}, \text{ and } \notag  \\
& \theta(n, n, 2n-2(v+u)) = \frac{[1+2n-(v+u)]![v+u]![n-(v+u)]![n-(v+u)]!}{[2n-2(v+u)]![n]![n]!} . \notag 
  \end{align} 
Similarly, we get 
  \begin{align}  \label{e.kp} 
  &G_{\mathbf{k}'_u, \sigma'_u}(q) \langle T^n_{\mathbf{k}'_u, \sigma'_u}\rangle^{-1} ([n]!)^2 \frac{\theta(n, n, 2n)}{[2n+1]} \\ 
  &=q^{w_0(n-\frac{2n}{2}+\frac{n^2}{2}-\frac{(2n)^2}{4})} \frac{[2(v-u)-1]}{\theta(n, n, 2(v-u)-2)} q^{w_1(n-\frac{(2(v-u)-2)}{2}+\frac{n^2}{2}- \frac{(2(v-u)-2)^2}{4})}  \notag \\ 
& \cdot \frac{[2n-2(v-u)+2]}{\theta(n, n, 2n-2(v-u)+2)} q^{w_2(n-\frac{(2n-2(v-u)+2)}{2}+ \frac{n^2}{2}- \frac{(2n-2(v-u)+2)^2}{4})} \notag \\ 
&\cdot \left(\frac{[n-((v-u)-1)]![n-(n-(v-u)+1)]!}{[0]!}\right)^2  (-1)^{n-v+u+1} \notag \\ 
\intertext{where}
& \theta(n, n, 2(v-u)-2) = \frac{[n+(v-u)]![1+n-(v-u)]![-1+(v-u)]![-1+(v-u)]!}{[-2 + 2(v-u)]![n]![n]!} \notag \\
& \theta(n, n, 2n-2(v-u)+2) = \notag \\
&\frac{[2+2n-(v-u)]![-1+(v-u)]![1+n-(v-u)]![1+n-(v-u)]!}{[2+2n-2(v-u)]![n]![n]!}.\notag 
  \end{align}  
  A direct computation shows  $\deg{G_{\mathbf{k}_u, \sigma_u}} =  \deg G_{\mathbf{k}'_u, \sigma'_u}$. 
 Factoring out terms that are common to both, we get
 \begin{align*} \label{e.kp} 
&\frac{G_{\mathbf{k}_u, \sigma_u} -G_{\mathbf{k}'_u, \sigma'_u} }{\text{common factors} } \\ 
&= q^{\frac{j}{g}| w_1  -w_2|}\frac{[2v+1][2v][2v-1][2(\frac{j}{g}(w_1+w_2-2)-v)-1]}{[(-3+w_1+w_2)\frac{j}{g} + \frac{g}{j}v+1 ][v]}-  \\ 
&q^{-\frac{j}{g}| w_1  -w_2|} \frac{[2v-1][2(\frac{j}{g}(w_1+w_2-2)-v)+1][2(\frac{j}{g}(w_1+w_2-2)-v)][2(\frac{j}{g}(w_1+w_2-2)-v)-1]}{[\frac{j}{g}(w_1+w_2-2)-v][(-3+w_1+w_2)\frac{j}{g} + w_1 + w_2 - \frac{g}{j}v-1]}. 
\end{align*} 
It is straightforward to see that the degree drop is $2\min \{w_1-1, w_2-1\} \frac{j}{g}$ from $\deg G_{\mathbf{k}_u, \sigma_u}$. Let $L(u) = \deg \left(G_{\mathbf{k}_u, \sigma_u}- G_{\mathbf{k}'_u,  \sigma'_u} \right)$. 
Now we show that all the other terms have degrees that are more than $2\min \{w_1-1, w_2-1\} \frac{j}{g}$ away from $L(0)$. Therefore, $L(0) = \deg G_{\mathbf{k}_0, \sigma_0} - 2\min \{w_1-1, w_2-1\} \frac{j}{g} $ is the degree of $\langle L^n \rangle$. Note first that for a term  $\mathcal{G}_{k}$ of the state sum as in Theorem \ref{t.ssum}, if $k_0 < \sum_{i=1}^2 k_i$, then it has degree strictly smaller by at least $2\min \{w_1-1, w_2-1\} \min \{k'_1, k'_2\}$ compared to $\deg \mathcal{G}_{k'}$, where $k'_0 =\sum_{i=1}^2 k'_i$ by \eqref{e.ddiff} from Theorem \ref{t.ssum}.  Thus it  suffices to compare the degrees of states with tight parameters $k_0 = \sum_{i=1}^2 k_i$. Quadratic integer programming shows that the lattice maxima lie on the diagonal $n = k_0= \sum_{i=1}^2 k_i$ close to the real maximum, and the leading terms corresponding to  $(\mathbf{k}_u, \sigma_u)$ and $(\mathbf{k}'_u, \sigma'_u)$ form a canceling pair as we have seen. In fact, the lattice maxima are the pair of terms corresponding to $(\mathbf{k}_0, \sigma_0)$ and $(\mathbf{k}'_0, \sigma'_0)$, and the next highest degree terms are the terms corresponding to the pair $(\mathbf{k}_u, \sigma_u)$ and $(\mathbf{k}'_u, \sigma'_u)$ for $u> 0$. 
If $j\leq u$, then the degree of such terms are bounded away from the proposed degree by more than $2\min \{w_1-1, w_2-1 \}\frac{j}{g}$ by simply plugging $u$ into the formula for the degree and comparing the resulting quadratics when $u=0$ and $u\not= 0$. We have 
\[ \deg L(0) - 2(w_2-1)\frac{j}{g} > \deg L(u)  \text{ when } (-2+w_1+w_2)u(2+u) > 8\frac{j}{g}(-1+w_2). \]
The inequality is easily satisfied provided $j\gg 0$ and $j\leq u$. 
Finally, all other terms $\mathcal{G}_k$ of the state sum have degrees strictly less than $L(0)-2\min \{w_1-1, w_2-1 \}\frac{j}{g}$ by directly computing the degrees of their leading terms.

 \end{proof}

\bibliographystyle{amsalpha}
\bibliography{references}

\end{document}